\newtheorem{theorem}{Theorem}[section]
\newtheorem{proposition}[theorem]{Proposition}
\newtheorem{lemma}[theorem]{Lemma}
\newtheorem{corollary}[theorem]{Corollary}
\newtheorem{conjecture}[theorem]{Conjecture}
\newtheorem{fact}[theorem]{Fact}
\theoremstyle{definition}
\newtheorem{definition}[theorem]{Definition}
\newtheorem{example}[theorem]{Example}
\newtheorem{remark}[theorem]{Remark}
\definecolor{lightblue}{rgb}{0.8,0.8,1.0}
\definecolor{lightgreen}{rgb}{0.8,1.0,0.8}
\definecolor{pBlue}{RGB}{86,139,190}
\definecolor{pCyan}{RGB}{149,186,201}
\definecolor{pSand}{RGB}{184,166,121}
\definecolor{pAlgae}{RGB}{87,115,135}
\definecolor{pSkin}{RGB}{236,216,167}
\definecolor{pGray}{RGB}{156,175,156}
\definecolor{pPink}{RGB}{215,114,127}
\definecolor{pOrange}{RGB}{211,153,80}
  \def\secondcircle{(210:1.75cm) circle (3.0cm)}
  \def\thirdcircle{(330:1.75cm) circle (3.0cm)}
  \tikzset{
dot/.style = {circle, fill, minimum size=#1,
              inner sep=0pt, outer sep=0pt},
dot/.default = 6pt % size of the circle diameter 
}
\newcommand{\defin}[1]{%
\relax\ifmmode%
\textcolor{blue}{#1}%
\else\textcolor{blue}{\emph{#1}}%
\fi%
}
\newcommand{\thsup}{\textnormal{th}}
\renewcommand{\rook}{\raisebox{-0.1em}{\symrook}}
\newcommand{\NN}{\text{NN}}
\newcommand*\circled[1]{\tikz[baseline=(char.base)]{\node[shape=circle,draw,inner sep=2pt] (char) {#1};}}
\tikzset{every picture/.append
	style={
		scale=1,
		x=1em,
		y=1em,
		entries/.style={xshift=-0.5em,yshift=-0.5em,font=\small},
		thickLine/.style={line width=1.4pt,line join=round},
		bgEntry/.style={xshift=-0.5em,yshift=-0.5em,
			regular polygon,regular polygon sides=4,fill,inner sep=0pt,minimum size=1.35em
		}
	}
}
\DeclareMathOperator{\des}{des}
\newcommand{\pathMat}{\mathcal{P}}
\newcommand{\rookMat}{\mathcal{R}}
\newcommand{\precdot}{\prec\mathrel{\mkern-5mu}\mathrel{\cdot}}
\newcommand{\preceqdot}{\mathrel{\mathpalette\pr@ceqd@t\relax}}
\newcommand{\pr@ceqd@t}[2]{%
  \begingroup
  \sbox\z@{$#1\prec$}\sbox\tw@{$#1\preceq$}%
  \dimen@=\dimexpr\ht\tw@-\ht\z@\relax
  {\preceq}%
  \mkern-5mu
  \raisebox{\dimen@}{$\m@th#1\cdot$}%
  \endgroup
}
\newcommand{\xvec}{\mathbf{x}}
\newcommand{\yvec}{\mathbf{y}}
\title{Rook matroids and log-concavity of $P$-Eulerian polynomials}
\author[Per Alexandersson]{Per Alexandersson}
\address{Department of Mathematics, Stockholm University, SE-106 91 Stockholm, Sweden}
\email{per.w.alexandersson@gmail.com}
\author[Aryaman Jal]{Aryaman Jal}
\address{Fakultät für Mathematik, Ruhr-Universität Bochum, 44801 Bochum, Germany}
\email{aryaman.jal@rub.de}
\keywords{Matroid, transversal matroid, rook placement, lattice path, log-concavity, real-rootedness, partially ordered set, Lorentzian polynomials, Neggers--Stanley conjecture}
\subjclass[2020]{05A05, 05A20, 05B35, 06A07, 26C10}
\begin{document}

\begin{abstract}
    We define and study rook matroids, the bases of which correspond to non-nesting rook placements on a skew Ferrers board. We show that rook matroids are a subclass of both transversal matroids and positroids; they also bear a subtle relationship to lattice path matroids that centers around not having the quaternary matroid $Q_{6}$ as a minor. The enumerative and distributional properties of non-nesting rook placements stand in contrast to those of usual rook placements: the non-nesting rook polynomial is not real-rooted in general, and is instead ultra-log-concave. We leverage this property together with a correspondence between rook placements and linear extensions of a poset to show that if $P$ is a naturally labeled width two poset, then the $P$-Eulerian polynomial $W_{P}$ is ultra-log-concave. This takes an important step towards resolving a log-concavity conjecture of Brenti (1989) and completes the story of the Neggers--Stanley conjecture for naturally labeled width two posets. 
\end{abstract}

\maketitle

%Now the doc is large, TOC is nice
\tableofcontents
\newpage

\section{Introduction}

\subsection{Overview}
Matroids can be defined from a wide variety of combinatorial objects. For example, the spanning trees of a graph, transversals of a set system, and dimers of a plabic graph respectively give rise to graphic matroids, transversal matroids~\cite{EdmondsFulkerson1965TransversalMatroids}, and positroids~\cite{Postnikov2006TotalPositivity}. A particularly well-behaved subclass of both transversal matroids and positroids is formed by the lattice path matroids introduced by Bonin, de Mier, and Noy~\cite{Bonin2003lattice, Bonin2006lattice}. The bases of a lattice path matroid $\pathMat_{\lambda/\mu}$ can be viewed as lattice paths contained within some skew shape $\lambda/\mu$. These matroids are known to have rich combinatorics; see~\cite{Bonin2003lattice} for a connection to the so-called tennis ball problem. They also play an important role in the algebro-combinatorial study of valuative invariants of matroids~\cite{Hampe2017IntersectionRingMatroids, Ferroni2023SchubertMatroidsDelannoyPaths, FerroniSchroter2022Valuative}.

In this paper, we introduce a new class of matroids, also parametrized by skew shapes, that shares many of the properties of lattice path matroids while being distinct from them. Given a skew shape $\lambda/\mu$, the bases of the \emph{rook matroid} $\rookMat_{\lambda/\mu}$ correspond to \emph{non-nesting} rook placements on $\lambda/\mu$, i.e., rook placements such that no rook lies South-East of another. We prove that, analogous to lattice path matroids, rook matroids are transversal and positroids; they are also closed under taking duals and direct sums. Moreover, the lattice path matroid and rook matroid on the same skew shape have the same Tutte polynomial, and hence the same number of bases, independent sets, spanning sets, and other invariants obtained as numerical specializations of the Tutte polynomial. However, despite sharing many of the same invariants, the two matroids are \emph{not} isomorphic in general; one contribution of this work is to identify precisely when an isomorphism exists and construct one explicitly. This hinges on whether or not $\lambda/\mu$ contains $332/1$ as a subshape; equivalently, whether or not the rook matroid contains the quaternary matroid $Q_{6}$ as a minor.    

Changing direction, we then apply the matroidal structure of non-nesting rook placements to prove that for a naturally labeled poset of width two, the corresponding $P$-Eulerian polynomial is ultra-log-concave, thereby making progress on a conjecture of Brenti~\cite{Brenti1989UnimodalAMS}. The context of this result is the Neggers--Stanley conjecture~\cite{Neggers1978, Stanley1986}, which asserted that $W_{P, \omega}$, the $(P, \omega)$-Eulerian polynomial of a labeled poset,   is real-rooted. This was disproved by Br\"{a}nd\'{e}n~\cite{Branden2004NeggersStanley} and Stembridge \cite{Stembridge2007NeggersStanleyCounter} in the early 2000s, the latter of whom provided a naturally labeled counterexample of width two. Despite these developments, the question of what weaker distributional property --- for example, log-concavity or unimodality --- holds for $W_{P, \omega}$ has remained open, with some progress in the graded, naturally labeled case~\cite{ReinerWelker2005CharneyDavis, Branden2008ActionsOnPermutations}. By deducing the strongest possible distributional property for $W_{P}$, our results thus complete the picture of the Neggers--Stanley conjecture in the naturally labeled, width two case. We do so by exhibiting a bijection between rook placements and linear extensions to show that a certain multivariate analog of $W_{P}$ is a Lorentzian polynomial in the sense of Br\"{a}nd\'{e}n and Huh~\cite{BrandenHuh2020Lorentzian}.    

\subsection{Outline and main results} 
In Section~\ref{section:prelims}, we gather background material on rook placements, distributional properties of combinatorial sequences, and matroid theory. The following is a summary of our main contributions together with the organization of the rest of the paper:
\begin{enumerate}
\item In Section~\ref{section:rook_matroid}, we define the rook matroid $\rookMat_{\lambda/\mu}$ (Theorem~\ref{thm:rook_is_transversal}) via the transversal description and study some of its properties. We show that rook matroids are positroids (Theorem~\ref{thm:rook_matroids_positroids}), give a precise criterion on $\lambda/\mu$ for when the rook matroid is isomorphic to the corresponding lattice path matroid $\pathMat_{\lambda/\mu}$ (Theorem~\ref{thm:rookPathBij}), and prove that $\rookMat_{\lambda/\mu}$ and $\pathMat_{\lambda/\mu}$ are nevertheless Tutte-equivalent in all cases (Theorem~\ref{thm:sameTutte}). 

\item In Section~\ref{section:distributional_properties}, we show that the generating polynomial $M_{\lambda/\mu}(t)$ of the size of non-nesting rook placements on a skew shape $\lambda/\mu$ is ultra-log-concave and not real-rooted in general. We give the first example of a lattice path matroid that \emph{does not} have the half-plane property (Theorem \ref{thm:LPM_not_HPP}), answering a question that was raised in~\cite{ChoeOxleySokalWagner2004} regarding transversal matroids with the half-plane property. 

\item In Section~\ref{section:rooks_as_linear_extensions}, we derive a bijective correspondence between skew shapes $\lambda/\mu$ and width two posets $P$ that maps non-nesting rook placements on $\lambda/\mu$ to linear extensions (Theorem~\ref{thm:path_to_poset}) of $P$. Consequently, the $P$-Eulerian polynomial can be expressed as $M_{\lambda/\mu}$, settling the  ultra-log-concavity consequence of the Neggers--Stanley conjecture for this class of posets (Corollary~\ref{neggers_stanley_width2}). We end by describing precisely when $M_{\lambda/\mu}(t)$ is gamma-positive.
\end{enumerate}

\section{Preliminaries}\label{section:prelims}
In this section, we define the basic combinatorial objects that will be the focus of our 
interest: non-nesting rook placements on boards and their generating polynomials. 
We will also recall the necessary background on matroids and notions from the geometry 
of polynomials that will be used in later sections.

\subsection{Boards, skew shapes and rook placements}\label{subsection:boards_skew-shapes_rooks}

We use $[n]$ to mean the set $\{1,2, \dotsc , n\}$ and $[m, n]$ to mean the 
discrete interval $\{m, m+1, \dotsc , n\}$. 
The set of size-$k$ subsets of $[n]$ is denoted by $\binom{[n]}{k}$.

A \defin{board} $B$ with $r$ rows and $c$ columns is a subset of the rectangular grid $[r] \times [r+1, r+c]$. We label the rows with elements from $[r]$ and the columns with elements from $[r+1, r+c]$ 
respectively, as in Example~\ref{example:non_nesting_rook_board}. 
The row labeling is done from the top to bottom and the column labeling is done from left to right. 
We refer to cells $(i, j)$ of $B$ with respect to this labeling unless otherwise specified. Given a board $B$ with $r$ rows and $c$ columns, the \defin{bipartite graph corresponding to $B$} is the graph $\Gamma_{B} = ([1, r]\sqcup [r+1,r+c], E)$ where $i \in [1,r]$ is connected to $j \in [r+1, r+c]$ if $(i, j) \in B$.
For most of what follows, we restrict ourselves to special classes of boards coming from integer partitions.  

An \defin{integer partition} $\lambda = (\lambda_{1}, \ldots , \lambda_{n})$ is any non-increasing tuple of positive integers. 
Following the English convention, the \defin{Ferrers diagram} of $\lambda$ is a left-justified, weakly 
decreasing array of boxes. 
Two special partitions correspond to Ferrers diagrams in the shape of a staircase and rectangle, respectively. 
For the former, $\delta_{n}$ denotes the partition $(n, n-1, \ldots , 1)$ and for 
the latter, $(a)^{b}$ denotes the partition consisting of $b$ parts each of size $a$. 

Given two integer partitions $\lambda \supseteq \mu$ of length at most $r$,
we let the \defin{skew Ferrers board} associated with $\lambda/\mu$ be the 
board with cells
\[
  \left\{ (i, r+j) : 1 \leq i \leq r \text{ and } \mu_i <  j \leq \lambda_i \right\}.
\]
When $\mu$ is the empty partition, we treat $\mu$ as the all-zeros vector. 
In what follows, we also refer to a skew Ferrers board as a \defin{skew shape}. We call a skew shape \defin{non-degenerate} if for all $i$, $\lambda_{i} > \mu_{i}$ and every column of the bounding rectangle contains at least one cell of $\lambda/\mu$. A \defin{subshape} of a skew shape $\lambda /\mu$ is the skew shape obtained by deleting rows or columns of $\lambda/\mu$. The \defin{size} of a skew shape $\lambda/\mu$ is the number of boxes in its diagram and is denoted $|\lambda/\mu|$. An \defin{outer corner} of $\lambda/\mu$ is a cell $(i, j) \notin \lambda/\mu$ such that $(i, j-1), (i-1, j) \in \lambda/\mu$. An \defin{inner corner} of $\lambda/\mu$ is a cell $(i, j) \notin \lambda/\mu$ such that $(i+1, j), (i, j+1) \in \lambda/\mu$. 

By a \defin{non-attacking} placement of rooks on a board, we mean a configuration of rooks where no two rooks
can take each other in the sense of chess. More formally, a non-attacking rook 
placement $\rho$ on a board $B$ is a subset of $B$, no two cells of which share a row or column index. Two rooks in a non-attacking rook placement form a \defin{nesting} if 
one rook lies South-East of another. Hereafter, a \defin{non-nesting rook placement} 
(or just rook placement, when it is clear that the setting is a non-nesting one) on a 
board $B$ is a non-attacking rook placement such that no pair of rooks forms a nesting. It is straightforward to see that a non-attacking rook placement on $B$ corresponds to a matching of the graph $\Gamma_{B}$. 

We will frequently move between the settings of non-nesting rook placements and lattice paths in a skew shape. By such a lattice path, we mean a path starting from the Southwest corner of $\lambda/\mu$ and terminating at the Northeast corner of $\lambda/\mu$ that consists only of North and East steps. Examples can be found in Section~\ref{subsection:lattice_path_matroids}.

\begin{example}\label{example:non_nesting_rook_board}
From left to right below is a board, a nesting rook placement and a non-nesting rook placement. 
\[
\ytableausetup{boxsize=1.3em}
\begin{ytableau}
\none & \none[\scriptstyle{4}] & \none[\scriptstyle{5}] & \none[\scriptstyle{6}] \\
 \none[\scriptstyle{1}] & \none  & \none  &  \;  \\
 \none[\scriptstyle{2}] &  \none & \;  & \none   \\
  \none[\scriptstyle{3}] & \; & \none & \;
\end{ytableau}
\qquad 
\qquad 
\begin{ytableau}
\none & \none[\scriptstyle{4}] & \none[\scriptstyle{5}] & \none[\scriptstyle{6}] \\
 \none[\scriptstyle{1}] & \none  & \none  &  \;  \\
 \none[\scriptstyle{2}] &  \none & \rook \;  & \none   \\
  \none[\scriptstyle{3}] & \; & \none & \rook \;
\end{ytableau}
\qquad 
\qquad 
\begin{ytableau}
\none & \none[\scriptstyle{4}] & \none[\scriptstyle{5}] & \none[\scriptstyle{6}] \\
 \none[\scriptstyle{1}] & \none  & \none  & \rook  \;  \\
 \none[\scriptstyle{2}] &  \none & \rook \;  & \none   \\
  \none[\scriptstyle{3}] & \rook \; & \none & \;
\end{ytableau}
\]
\end{example}

We denote the set of non-nesting rook placements on a skew 
shape $\lambda/\mu$ by $\defin{\NN_{\lambda /\mu}}$ and define the generating polynomial of its $k$-subsets as follows.

\begin{definition}\label{defn:non_nesting_rook_polynomial}
Given a skew shape $\lambda/\mu$ with $r$ rows, the \defin{non-nesting rook polynomial} of $\lambda /\mu$ is defined by 
\[
M_{\lambda /\mu}(t) = \sum_{k=0}^{r}r_{k}(\lambda /\mu)t^{k},
\]
where $r_{k}(\lambda /\mu)$ is the number of non-nesting rook placements on $\lambda /\mu$ of size $k$. 
\end{definition}

Note that the degree of $M_{\lambda/\mu}$ equals the length of the longest Northeast to Southwest diagonal that fits inside $\lambda /\mu$, which is not necessarily equal to $r$.  

\begin{example}\label{eg:rectangles}
Let $\lambda_{a, b} = (a^{b})$. 
Then we have 
\[
M_{\lambda_{a, b}}(t) = \sum_{k=0}^{\min(a, b)}\binom{a}{k}\binom{b}{k}t^{k}.
\]

This can be seen directly: choose a set $\{j_{1}< \ldots < j_{k}\}$ of columns in $\binom{a}{k}$ 
ways and choose a set of rows $\{i_{1} > \ldots > i_{k}\}$ independently in $\binom{b}{k}$ ways. 
Form the rook placement with cells $\{(i_{\ell}, j_{\ell}): \ell=1, \ldots k\}$; by virtue of 
the ordering, this must be non-nesting. 

By specializing $a=b=n$, we obtain the type B Narayana polynomial: 
\[
W(t) = \sum_{k=0}^{n}\binom{n}{k}^{2}t^{k}.
\]
\end{example}

\begin{example}[Narayana polynomials]\label{eg:Narayana_poly}
The non-nesting rook polynomial for the staircase 
Ferrers board $\lambda = \delta_{n} = (n,n-1,n-2,\dotsc,2,1)$ is
\[
M_{\delta_{n}}(t) = t^{-1} N_{n+1}(t),
\]
where $N_{n}(t) = \sum_{k=1}^{n}\frac{1}{n}\binom{n}{k}\binom{n}{k-1}t^{k}$ denotes the $n^\thsup$ Narayana polynomial. This connection with lattice paths is addressed in greater generality in Section~\ref{subsection:lattice_path_matroids}. 
\end{example}

Distributional properties and multivariate generalizations of $M_{\lambda/\mu}$, 
together with its realizability as a $P$-Eulerian polynomial of a poset,  will be studied in later sections. 

\subsection{Matroids}\label{subsection:prelim_matroids}
We use the following axiomatization of matroids in terms of its bases; for any undefined terms 
hereafter we refer the reader to \cite{Oxley2011MatroidBook}.
\begin{definition}\label{defn:matroid_defn}
    A \defin{matroid} $M$ is a pair $(E, \mathcal{B})$ where $E$ is a finite set and $\mathcal{B}$ is a collection of subsets of $E$, called \defin{bases}, satisfying the following two properties: \begin{enumerate}
        \item $\mathcal{B} \neq \emptyset$.
        \item For each distinct pair $B_{1}, B_{2}$ in $\mathcal{B}$ and element $a \in B_{1} \setminus B_{2}$, there exists $b \in B_{2} \setminus B_{1}$ such that $(B_{1} \setminus \{a\}) \cup \{b\} \in \mathcal{B}$.
    \end{enumerate}
\end{definition}

The second property above is referred to as the \defin{basis-exchange axiom} for matroids. Numerous other axiomatizations of matroids exist; some can be found in \cite[Chapter 1]{Oxley2011MatroidBook}. One such axiomatization can be made in terms of independent sets. A set $I \subset E$ is an \defin{independent set} of a matroid $M = (E, \mathcal{B})$ if $I \subseteq B$, 
for some $B \in \mathcal{B}$. 

An important example of matroids is the \defin{uniform matroid} $U_{k, n}$, 
defined as the  matroid with ground set $[n]$ and bases given by all $k$-subsets of $[n]$. We recount more terminology associated to matroids. 
\begin{enumerate}
\item A \defin{loop} of a matroid $M$ is an element of the ground set not contained in any basis of $M$. A \defin{coloop} is an element of the ground set that is contained in every basis of $M$.
    \item Given a matroid $M = (E, \mathcal{B})$ and an element $e \in E$, the \defin{deletion} and \defin{contraction} of $M$ by $e$ are the matroids with ground set $E \setminus e$ that are respectively denoted by $M\setminus e$ and $M /e$ and the bases of which are respectively given by: \begin{align*}
    \mathcal{B}(M \setminus e) &= \begin{cases}
        \{B \in \mathcal{B}: e \notin B\} \quad &\text{if $e$ is not a coloop of $M$}\\
        \{B \setminus e: B \in \mathcal{B} \} \quad &\text{if $e$ is a coloop of $M$}
    \end{cases}\\ 
    \mathcal{B}(M / e) &= \begin{cases}
    \{B\setminus e : B \in \mathcal{B}, e \in B\} \quad &\text{if $e$ is not a loop of $M$}\\
    \mathcal{B}(M\setminus e) \quad &\text{if $e$ is a loop of $M$}   
    \end{cases}
\end{align*}

A matroid obtained from $M$ by a sequence of deletions and contractions is called a \defin{minor} of $M$. A class $\mathcal{C}$ of matroids is \defin{minor-closed} if given a matroid $M \in \mathcal{C}$, every minor of $M$ is also in $\mathcal{C}$. 

\item The \defin{dual} of a matroid $M = (E, \mathcal{B})$ is the matroid $M^{*}$ on the ground set $E$ and bases
$\mathcal{B}(M^{*}) = \{E\setminus B: B \in \mathcal{B}\}$.
\end{enumerate}

 Transversals of set systems give rise to an important class of matroids. 
 Given a set system $\mathcal{N} = (N_{1}, \ldots , N_{r})$ of not necessarily distinct 
 subsets $N_{i}$ of some ground set $E$, a \defin{transversal} $X$ of $\mathcal{N}$ is a 
 subset of $E$ consisting of $r$ distinct elements, one from each set in $\mathcal{N}$: that is, $X = \{a_{i}: i=1, \ldots , r\}$ such that $a_{i} \in N_{i}$ for $i=1, \ldots , r$. 
 A \defin{partial transversal} of $\mathcal{N}$ is a transversal for 
 some subsystem $\mathcal{M} = (N_{i})_{i \in J}$ indexed by a subset $J \subset [r]$. 
 The following theorem due to Edmonds and Fulkerson laid the foundation for the theory of transversal matroids:

\begin{theorem}\cite{EdmondsFulkerson1965TransversalMatroids}
    The partial transversals of $\mathcal{N}$ are the independent sets of a matroid on $E$.
\end{theorem}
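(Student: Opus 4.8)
The plan is to verify the three independent-set axioms for the collection $\mathcal{I}$ of partial transversals of $\mathcal{N}$: that $\mathcal{I}$ is nonempty, downward closed, and satisfies the augmentation axiom. The cleanest route is to encode partial transversals as matchings in a bipartite graph. First I would form the bipartite graph $G$ on the vertex set $[r] \sqcup E$, joining $i \in [r]$ to $e \in E$ precisely when $e \in N_i$. A set $X = \{a_i : i \in J\} \subseteq E$ with the $a_i$ distinct and $a_i \in N_i$ is then exactly the set of $E$-endpoints of the matching consisting of the edges $\{i, a_i\}$ for $i \in J$; conversely, the $E$-endpoints of any matching in $G$ form a partial transversal. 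Thus $X \subseteq E$ is a partial transversal of $\mathcal{N}$ if and only if $G$ admits a matching saturating $X$.

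With this dictionary the first two axioms are immediate. The empty set is a partial transversal (take $J = \emptyset$), so $\mathcal{I} \neq \emptyset$. If $X$ is a partial transversal saturated by a matching $M$ and $Y \subseteq X$, then restricting $M$ to the edges incident to $Y$ produces a matching saturating $Y$, so $Y$ is again a partial transversal; hence $\mathcal{I}$ is downward closed.

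The substance of the proof is the augmentation axiom, and this is where I expect the main obstacle to lie. Suppose $I, J \in \mathcal{I}$ with $|I| < |J|$, witnessed by matchings $M_I$ and $M_J$ of $G$ that saturate exactly $I$ and $J$ on the $E$-side, so $|M_I| = |I|$ and $|M_J| = |J|$. I would analyze the symmetric difference $H = M_I \triangle M_J$, whose connected components are paths and even cycles that alternate between $M_I$- and $M_J$-edges. Since $|M_J| > |M_I|$, at least one component $P$ contains strictly more $M_J$-edges than $M_I$-edges; such a $P$ is an $M_I$-augmenting path, beginning and ending with $M_J$-edges at vertices unsaturated by $M_I$. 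Its endpoint on the $E$-side is a vertex $e$ that is $M_J$-saturated but $M_I$-unsaturated, whence $e \in J \setminus I$. Augmenting along $P$, that is, replacing $M_I$ by $M_I \triangle P$, yields a matching of size $|M_I| + 1$ whose saturated $E$-vertices are exactly $I \cup \{e\}$. Therefore $I \cup \{e\}$ is a partial transversal, establishing the augmentation axiom.

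The only delicate point in the argument is the bookkeeping on the augmenting path $P$: one must check both that its $E$-endpoint $e$ genuinely lies in $J \setminus I$ (which follows from $P$ starting and ending with $M_J$-edges) and that augmenting adds exactly this one new $E$-vertex to the saturated set while keeping every element of $I$ saturated (which follows because the opposite endpoint of $P$ lies on the $[r]$-side). Once these are confirmed, the three axioms hold and the partial transversals of $\mathcal{N}$ form the independent sets of a matroid on $E$.
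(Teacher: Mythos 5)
Your proof is correct. Note that the paper does not prove this statement at all: it is quoted as a cited result of Edmonds and Fulkerson, so there is no in-paper argument to compare against. Your argument is the standard one — encode partial transversals as the $E$-endpoints of matchings in the bipartite graph on $[r]\sqcup E$, dispose of nonemptiness and downward closure immediately, and obtain augmentation from the path/cycle decomposition of $M_I \triangle M_J$. The two bookkeeping points you flag are exactly the ones that need checking, and both go through: a component with an excess $M_J$-edge is a path with an odd number of edges, so its endpoints lie on opposite sides of the bipartition; its $E$-side endpoint is covered by $M_J$ but by no edge of $M_I$ (an $M_I$-edge at that vertex would either lie in $M_I\cap M_J$, forcing two $M_J$-edges there, or lie in the symmetric difference, contradicting that the vertex has degree one there), so it lies in $J\setminus I$; and augmenting along the path keeps every internal vertex covered while the opposite endpoint sits in $[r]$ and does not perturb the saturated subset of $E$. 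The argument is complete.
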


Matroids that arise in this way are called \defin{transversal matroids} and $\mathcal{N} = (N_{1}, \ldots , N_{r})$ is called a \defin{presentation} of the matroid. 
The bases of a transversal matroid are given by the maximal partial transversals of $\mathcal{N}$. When $\mathcal{N}$ admits a transversal, the maximal partial transversals are precisely the transversals of $\mathcal{N}$. 

\subsection{Real-rootedness, log-concavity and unimodality}
In this subsection, we recall some basic notions in the geometry of polynomials.

Given a sequence of non-negative integers $(a_{k})_{k =0}^{n}$, it is convenient to work instead with its generating polynomial $A(t) =  \sum_{k=0}^{n}a_{k}t^{k}$.  The following notions, listed in descending order of strength, encode distributional properties of the sequence: \begin{enumerate}
    \item The generating polynomial $A(t)$ is \defin{real-rooted} if $A$ has only real roots as univariate polynomial. Note that by the non-negativity of the $a_{k}$, the roots of $A$ will necessarily be non-positive.
    \item The sequence $(a_{k})_{k =0}^{n}$ is \defin{ultra-log-concave} if \[\left(\dfrac{a_{k}}{\binom{n}{k}}\right)^{2} \geq \dfrac{a_{k-1}}{\binom{n}{k-1}}\cdot \dfrac{a_{k+1}}{\binom{n}{k+1}} \quad \text{for all $1\leq k \leq n-1$.}
    \]
    \item The sequence $(a_{k})_{k =0}^{n}$ is \defin{log-concave} if \[a_{k}^{2} \geq a_{k-1} \cdot a_{k+1} \quad \text{for all $1\leq k \leq n-1$.}
    \]
    \item The sequence $(a_{k})_{k =0}^{n}$ is \defin{unimodal} if there exists some $0 \leq m \leq n$ such that:
    \[a_{0}\leq a_{1} \leq \ldots \leq a_{m-1 }\leq a_{m} \geq a_{m+1} \geq \ldots \geq a_{n}.
    \]
\end{enumerate}

The sequence $(a_{k})_{k =0}^{n}$ 
has \defin{no internal zeros} if 
\[
a_{i}a_{j} > 0 \implies a_{k}>0 \quad \text{for all $0\leq i < j < k \leq n$}. 
\]

As mentioned above, the following implications hold: $(1) \implies (2) \implies (3) \implies (4)$. 
Here $(3)$ implies $(4)$ only if $(a_{k})_{k=0}^{n}$ has no internal zeros.
For a comprehensive and modern survey of these notions, and some of their multivariate counterparts that are addressed in later sections of this paper, see \cite{Branden2015}.

\section{The rook matroid}\label{section:rook_matroid}
In this section, we show that there is a natural matroid structure underpinning the set of non-nesting rook placements on a skew Ferrers board. The associated class of matroids is a dual-closed subclass of both transversal matroids and positroids. However, it is not minor-closed and therein lies the subtlety in the relationship between rook matroids and lattice path matroids. The schematic in Figure~\ref{fig:matroid_venn} summarizes the various containment and characterization properties that rook matroids satisfy with respect to other known classes of matroids; we will prove these in Sections~\ref{subsection:lattice_path_matroids} and \ref{subsection:positroids}. 

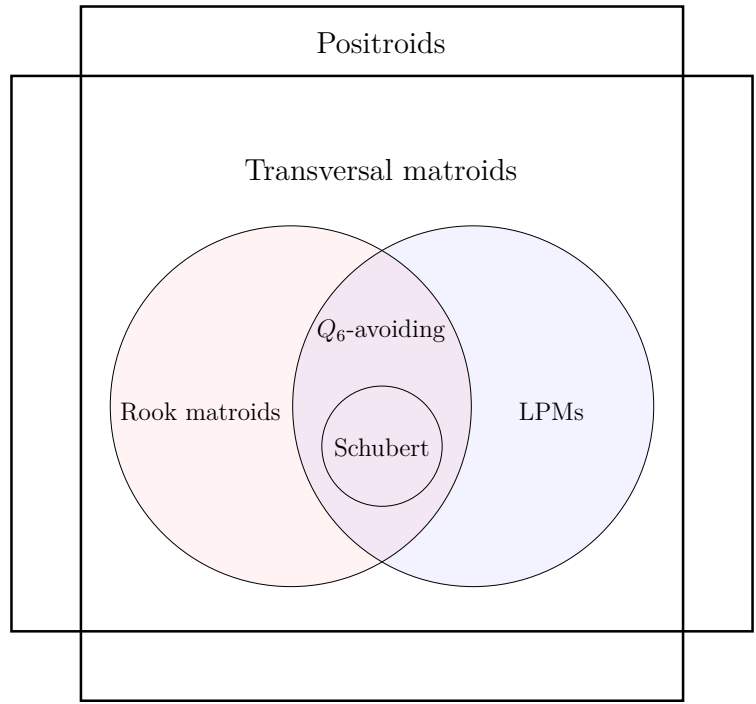
\begin{figure}[!htbp]
    \centering
    \scalebox{0.8}{
   \begin{tikzpicture}
    \draw[black, very thick] (-13,-15) rectangle (13,15);
     \draw[black, very thick] (-16,-12) rectangle (16,12);
      \begin{scope}
      \fill[red,opacity=0.05]
     \secondcircle

    \thirdcircle;
      \end{scope}
      \begin{scope}

   \fill[blue,opacity=0.05]
    \thirdcircle;
      \end{scope}
     
      \draw \secondcircle node [text=black,label={[xshift=-1.5cm, yshift=-0.5cm]{\large Rook matroids}}, label={[xshift=1.5cm, yshift=0.8cm]{\large $Q_{6}$-avoiding}}, label={[xshift=1.5cm, yshift=3.5cm]{\Large Transversal matroids}}, label={[xshift=1.5cm, yshift=5.6cm]{\Large Positroids}}] {};
      \draw \thirdcircle node [text=black, label={[xshift=1.3cm, yshift=-0.5cm]{\large LPMs}}] {};
      %\path[black, bend right] ($(330:1.75)-(3,0)$) edge ($(210:1.75)+(3,0)$); ;
      \node[circle, draw = black, minimum size = 2 cm, label={[xshift=0cm, yshift=-1.3cm]{\large Schubert}}] (A) at (0,-4) {};
    \end{tikzpicture}
    }
    \caption{Various matroid subclasses.}
    \label{fig:matroid_venn}
\end{figure}

We begin by studying the structural properties of the rook matroid that arise from its transversal description.

\subsection{Structural results}\label{subsection:structural_results}

Let $\lambda/\mu$ be a skew shape. Throughout we draw and refer to Young diagrams $\lambda$ and $\mu$ in terms of the English convention, that is as a left-justified array of boxes. With this convention in place, we index
the rows of $\lambda/\mu$ by $\{1,2,\dotsc,r\}$ and the columns by $\{r+1,\dotsc,r+c\}$. We set $E_{\lambda/\mu} = [r+c]$. 

Given a non-nesting rook placement $\rho$ on $\lambda/\mu$,
we associate to it the set $R(\rho) \cup C(\rho)$
where $R(\rho)$ is the set of row indices that are not occupied by $\rho$ and $C(\rho)$ is the set of column indices that are occupied by $\rho.$ Note that for each non-nesting rook placement $\rho$, we have that $|R(\rho) \cup C(\rho)| = r$, the number of rows of the shape. We gather these sets into a single collection: 
\[
 \defin{\rookMat_{\lambda/\mu}} \coloneqq \{ R(\rho) \cup C(\rho) : \rho  \in \NN_{\lambda/\mu} \},
\]
where as before $\NN_{\lambda/\mu}$ is the set of non-nesting rook placements on $\lambda/\mu$. We use the following elementary fact about diagrams characterizing skew shapes.

\begin{fact}\label{fact:skew_shape_characterization}
    A board $B$ is a skew shape if and only if for every pair of cells $(i, j), (k, \ell) \in B$ with $i<k$ and $j< \ell$, we also have $(k, j), (i, \ell) \in B$.
\end{fact}

Given a skew shape, we may define a set system as follows. For $i \in [r]$, 
let $A_{i}$ be the set of column indices occupied by row $i$ together with $i$, that is, \[A_{i} = \{j \in [r+ 1, r+c]: (i, j) \in \lambda/\mu \} \cup \{i\}.\]
Then $\defin{\mathcal{A}_{\lambda/\mu}} \coloneqq  (A_1,\dotsc,A_r)$
is a set system on $E_{\lambda / \mu}$ which defines a transversal matroid.

\begin{example}
Consider the skew shape $\lambda/\mu = 77553/42$.
\[
\ytableausetup{boxsize=1.3em}
 \begin{ytableau}
 \none & \none[6] &  \none[7] & \none[8] & \none[9] & \none[10] & \none[11] & \none[12] \\
\none[1] & \none & \none & \none & \none &  & & \\
 \none[2] &\none & \none &  &  &  & &  \\
\none[3] &  &  &  & &  \\
\none[4] &   &  &  & &  \\
\none[5] &    &  &  \\
 \end{ytableau}
 \qquad 
 \hspace{1cm}
  \begin{ytableau}
 \none & \none[6] &  \none[7] & \none[8] & \none[9] & \none[10] & \none[11] & \none[12] \\
\none[1] & \none & \none & \none & \none &  & \rook & \\
 \none[2] &\none & \none &  &  & \rook & &  \\
\none[3] &  &  &  & &  \\
\none[4] &   &  &  & &  \\
\none[5] & \rook   &  &  \\
 \end{ytableau}
\]
This skew shape has corresponding set system
\begin{align*}
A_1 &= \{1,10,11,12\} & A_2 &= \{2,8,9,10,11,12\}  & A_3 &= \{3,6,7,8,9,10\} \\
A_4 &= \{4,6,7,8,9,10\} & A_5 &= \{5,6,7,8\}.
\end{align*}
The non-nesting rook placement shown on the right above can be represented by $R(\rho)\cup C(\rho) = \{3,4,6,10,11\}$ where $11 \in A_1$, $10 \in A_2$, $3 \in A_3$, $4 \in A_4$, $6 \in A_5$. 
\end{example}

\begin{theorem}\label{thm:rook_is_transversal}
 The set $\rookMat_{\lambda/\mu}$ is
 the collection of bases of the transversal matroid with set system $\mathcal{A}_{\lambda / \mu}$.
\end{theorem}
\begin{proof}
Given $\rho \in 
\NN_{\lambda/\mu}$, it is clear that $R(\rho) \cup C(\rho)$ is a transversal of $\mathcal{A}_{\lambda/\mu}$. Indeed, if $\rho$ has no rook in row $i$, match $i$ to $A_{i}$; if $\rho$ has a rook in row $i$ at $r+j$, match $r+j$ to $A_{i}$. Since $\rho$ is non-attacking, the chosen elements are distinct.

Conversely, suppose we are given a transversal $T$ of $\mathcal{A}_{\lambda / \mu}$. Let $[r] \setminus T = \{i_{1}< \cdots < i_{k}\}$ and $T \cap [r+1, r+c] = \{r+j_{1}, \cdots , r + j_{k}\}$ where $j_{1}> \cdots > j_{k}$. We claim that $\rho = \{(i_{\ell}, r+j_{\ell}): \ell =1, \ldots , k\}$ is a non-nesting rook placement on $\lambda/\mu$. By the ordering on $i_{\ell}, j_{\ell}$, the rook placement $\rho$ is non-nesting; it remains to show that $\rho$ lies on $\lambda/\mu$, that is, $\mu_{i_{\ell}} < j_{\ell} \leq \lambda_{i_{\ell}}$ for $\ell = 1, \ldots , k$. For the first inequality, suppose there exists $s \in [k]$ such that $j_{s} \leq \mu_{i_{s}}$. Then, for $t \leq s $ and $a \geq s$, we have $\mu_{i_{t}} \geq \mu_{i_{s}} \geq j_{s} \geq j_{a}$. This implies that for indices $t \leq s$ and $a \geq s$ column $r + j_{a}$ does not have a cell in row $i_{t}$. Hence the $k-s+1$ column elements $r+j_{s}, r+j_{s+1}, \ldots , r+j_{k}$, would have to be chosen from the $k - s $ sets $A_{i_{s+1}}, \ldots , A_{i_{k}}$, which contradicts the fact that $T$ is a transversal. 

For the second inequality, suppose to the contrary that there exists $s \in [k]$ such that $j_{s} > \lambda_{i_{s}}$. Then, for $t \geq s$ and $a \leq s$, we have $\lambda_{i_{t}} \leq \lambda_{i_{s}} < j_{s} \leq j_{a}$. This implies that for indices $t$ and $a$ in the aforementioned ranges, column $r+j_{a}$ does not have a cell in row $i_{t}$. Hence the $s$ columns $r+j_{1}, \ldots , r+j_{s}$ would have to be chosen from the $s-1$ sets $A_{i_{1}}, \ldots , A_{i_{s-1}}$, which contradicts the fact that $T$ is a transversal. Thus, $\rho$ is a non-nesting rook placement on $\lambda/\mu$ with $R(\rho) \cup C(\rho) = T$, which completes the proof.
\end{proof}

Identifying the resulting matroid by its set of bases, we call $\rookMat_{\lambda / \mu}$ the \defin{rook matroid} on $\lambda / \mu$; for any matroid $M$, we say that $M$ is a rook matroid if it is isomorphic to a rook matroid of the form $\rookMat_{\lambda / \mu}$. The uniform matroid $U_{k, n}$ corresponds to the rook matroid on the $(n-k) \times k$ rectangle. Later on, we will see how rook matroids interact with other well-studied classes of matroids, including lattice path matroids and positroids.

Empty rows and columns have the expected matroidal interpretation. A column $j$ of $\lambda/\mu$ is empty if and only if the corresponding column label belongs to no set $A_i$, and hence is a loop of $R_{\lambda/\mu}$. Similarly, row $i$ is empty, equivalently $\lambda_i=\mu_i$, if and only if $A_i= \{i\}$, and hence the row label $i$ is a coloop. Thus deleting empty columns and rows from the skew shape amounts to removing loops and coloops from the corresponding rook matroid.

It is natural to ask if the skew shape assumption is necessary in the above theorem and the proposition below confirms that this is true.

\begin{proposition}\label{prop:skew_shape_necessary}
    Let $B$ be a board with $r$ rows and $c$ columns, and let \[
    \mathcal{B} = \{R(\sigma) \cup C(\sigma): \sigma \in \mathrm{NN}_{B}\},
    \]
    where $\mathrm{NN_{B}}$ denotes the set of non-nesting rook placements on $B$. If $M = ([r+c], \mathcal{B})$ is a matroid with bases $\mathcal{B}$, then the board $B$ must be equal to some skew shape. 
\end{proposition}

\begin{proof}
We begin by noting that if $i$ is a row index, then the bases of  $M/ i$ correspond to the non-nesting rook placements on $B$ such that row $i$ is not occupied. Similarly, if $j$ is a column index, then the bases of $M\setminus j$ correspond to the non-nesting rook placements on $B$ such that column $j$ is not occupied.

Now suppose $B$ is not a skew shape. Then, by Fact~\ref{fact:skew_shape_characterization}, there exists $(i, j), (k, \ell) \in B$ with $i<k$ and $j<\ell$ such that at least one of $(k, j)$ or $(i, \ell)$ do not lie in $B$. Let $M_{1} = (M\setminus Y)/X$ where $X = [r]\setminus \{i, k\}$ and $Y = [r+1, r+c] \setminus \{j, \ell\}$. Let $M_{2}$ be the matroid obtained by deleting all loops and coloops of $M_{1}$. Then, by the observation in the first paragraph of this proof, $M_{2}$ is isomorphic to the matroid $M_{3} = ([4], \mathcal{B}')$ where \[
\mathcal{B}' = \{R(\sigma) \cup C(\sigma): \sigma \in \mathrm{NN}_{B'}\},
\]
and $B'$ is one of the three boards in Figure~\ref{fig:boards_are_skew}. In each case, $\{2,3\}, \{1,4\} \in \mathcal{B}'$ and $3 \in \{2,3\}\setminus \{1,4\}$ but neither of $\{1,2\}$ nor $\{2,4\}$ are valid non-nesting rook placements on any of these boards. 
\end{proof}

\begin{figure}[!ht]
    \centering
    \begin{ytableau} \none & \none[3] & \none[4]  \\ \none[1] & \rook & & \none  \\ \none[2] & \none  & \\ \none & \none & \none[23]  \end{ytableau}
    \hspace{-0.5cm}
  \begin{ytableau} \none & \none[3] & \none[4]  \\ \none[1] &  & & \none  \\ \none[2] & \none  & \rook  \\ \none & \none & \none[14]  \end{ytableau}
\qquad
 \begin{ytableau} \none & \none[3] & \none[4]  \\ \none[1] & \rook  & \none  \\ \none[2] &  &  \\ \none & \none & \none[23]
   \end{ytableau}
   \hspace{-0.25cm}
\begin{ytableau} \none & \none[3] & \none[4]  \\ \none[1] &  & \none  \\ \none[2] &  & \rook  \\ \none & \none & \none[14]  \end{ytableau}
\qquad
\begin{ytableau} \none & \none[3] & \none[4]  \\ \none[1] & \rook  & \none  \\ \none[2] & \none  &  \\ \none & \none & \none[23]
   \end{ytableau}
   \hspace{-0.25cm}
\begin{ytableau} \none & \none[3] & \none[4]  \\ \none[1] &  & \none  \\ \none[2] & \none  & \rook  \\ \none & \none & \none[14]  \end{ytableau}

\caption{For each of the three boards $B'$ above, the collection $\mathcal{B}'$ in the proof of Proposition~\ref{prop:skew_shape_necessary} fails to satisfy the basis-exchange axiom.}
\label{fig:boards_are_skew}
\end{figure}

More can be said about the transversal structure of the rook matroid. A \defin{fundamental transversal matroid} is a matroid $M$ that has a basis $B$ (called a fundamental basis) such that each cyclic flat $F$ of $M$ is spanned by some subset of $B$. An equivalent characterization identifies fundamental transversal matroids as those transversal matroids that have a presentation $(A_{1}, \ldots , A_{r})$ such that for each $i \in [r]$ there is some element in $A_{i}$ that is in none of the other sets. This allows us to immediately note that rook matroids are fundamental transversal, as the distinguished element in each set of the presentation of $\rookMat_{\lambda/\mu}$ is simply the associated row label. The fundamental basis of the rook matroid then corresponds to the empty rook placement. See \cite{BoninKungdeMier2011FundamentalTransversal} for more about fundamental transversal matroids.

We now examine rook matroids through the structural properties of being closed under taking direct sums, duals, and minors. The first two properties hold  while the third fails.

\begin{lemma}\label{lem:direct_sum}
The class of rook matroids is closed under direct sums.
\end{lemma}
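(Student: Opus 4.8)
The plan is to reduce to two summands by induction, and then to realize the direct sum of two rook matroids as a single rook matroid by placing the two underlying skew shapes diagonally so that they cannot interact. Since the direct sum is associative, it suffices to show that for skew shapes $S_1 = \lambda^{(1)}/\mu^{(1)}$ and $S_2 = \lambda^{(2)}/\mu^{(2)}$, with $r_i$ rows and $c_i$ columns, the matroid $\rookMat_{S_1}\oplus\rookMat_{S_2}$ is again a rook matroid.

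First I would construct a new skew shape $S=\lambda/\mu$ whose diagram consists of a copy of $S_2$ in the upper-right corner and a copy of $S_1$ in the lower-left corner, occupying disjoint rows and disjoint columns. Concretely, set
\[
\lambda = (c_1+\lambda^{(2)}_1,\dotsc,c_1+\lambda^{(2)}_{r_2},\lambda^{(1)}_1,\dotsc,\lambda^{(1)}_{r_1}),\quad
\mu = (c_1+\mu^{(2)}_1,\dotsc,c_1+\mu^{(2)}_{r_2},\mu^{(1)}_1,\dotsc,\mu^{(1)}_{r_1}),
\]
so that the top $r_2$ rows reproduce $S_2$ shifted right by $c_1$ columns and the bottom $r_1$ rows reproduce $S_1$ in the leftmost $c_1$ columns. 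I would then check that $S$ is a genuine skew shape: within each block the monotonicity of $\lambda$ and $\mu$ is inherited from $S_1$ and $S_2$, and at the junction between the two blocks it holds because every occupied column of the $S_1$-block has index at most $c_1$ while every occupied column of the $S_2$-block has index exceeding $c_1$. By construction the rows of $S$ are the disjoint union of the rows of $S_1$ and $S_2$, and likewise for the columns, so $E_S = E_{S_1}\sqcup E_{S_2}$ under the natural relabeling.

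Next I would analyze $\NN_S$. Since the $S_1$-block (lower-left) and the $S_2$-block (upper-right) share neither a row nor a column, any rook placement on $S$ restricts to a placement $\rho_1$ on $S_1$ and a placement $\rho_2$ on $S_2$, and conversely any such pair assembles into a placement on $S$; non-attacking across the blocks is automatic. The key point is that no cross-nesting can occur: any rook of $\rho_1$ lies strictly South and West of any rook of $\rho_2$, so the two occupy a Southwest--Northeast position and neither is South-East of the other. Hence $\rho$ is non-nesting on $S$ if and only if $\rho_1$ and $\rho_2$ are each non-nesting, giving a bijection $\NN_S \cong \NN_{S_1}\times\NN_{S_2}$.

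Finally I would push this through the basis map $\rho\mapsto R(\rho)\cup C(\rho)$ of Theorem~\ref{thm:rook_is_transversal}. Writing $\rho=\rho_1\sqcup\rho_2$, the occupied rows split as $R(\rho)=R(\rho_1)\sqcup R(\rho_2)$, and because no column is shared between the blocks the unoccupied columns split as $C(\rho)=C(\rho_1)\sqcup C(\rho_2)$. Thus every basis of $\rookMat_S$ is a disjoint union of a basis of $\rookMat_{S_1}$ and a basis of $\rookMat_{S_2}$, and every such union is attained, which is precisely the collection of bases of $\rookMat_{S_1}\oplus\rookMat_{S_2}$. Therefore $\rookMat_S\cong\rookMat_{S_1}\oplus\rookMat_{S_2}$, and an easy induction extends the conclusion to any finite direct sum. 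I expect the only genuine obstacle to be the two verifications that the diagonal placement produces a bona fide skew shape and admits no cross-nestings; both rest on the anti-diagonal geometry of the blocks and are otherwise routine.
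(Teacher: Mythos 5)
Your proof is correct and follows essentially the same route as the paper: the paper also realizes $\rookMat_{S_1}\oplus\rookMat_{S_2}$ as the rook matroid on the skew shape obtained by placing $S_1$ diagonally to the South-West of $S_2$, and then invokes the resulting isomorphism. You have simply filled in the details (the explicit $\lambda/\mu$, the skew-shape check at the junction, and the observation that cross-block pairs sit in Southwest--Northeast position and hence cannot nest) that the paper leaves as ``the obvious isomorphism.''
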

\begin{proof}
A direct sum of rook matroids corresponds to the rook matroid on the \defin{direct sum} of the corresponding skew shapes. That is, if $\lambda_{1} / \mu_{1}$ and $\lambda_{2} / \mu_{2}$ are two skew shapes, then the direct sum $\lambda/\mu \coloneqq (\lambda_{1} / \mu_{1})\oplus(\lambda_{2} / \mu_{2})$ is obtained by positioning the North-East corner of $\lambda_{1} / \mu_{1}$ to the immediate South-West of the South-West corner of $\lambda_{2} / \mu_{2}$; see Figure~\ref{fig:direct}. Then by the obvious isomorphism, $\rookMat_{\lambda/\mu} \cong \rookMat_{\lambda_{1} / \mu_{1}} \oplus \rookMat_{\lambda_{2} / \mu_{2}}$.
\end{proof}

\begin{figure}[!ht]
    \centering
   \begin{tikzpicture}[inner sep=0in,outer sep=0in]
\node (n) {\begin{varwidth}{6cm}{
\ytableausetup{boxsize=1.25em}
 \begin{ytableau} \none & \none[5] & \none[6] & \none[7] & \none[8] & \none[9] & \none[10] & \none[11]  \\ \none[1] & \none & \none & \none & \none & \none &  &  \\ \none[2] & \none & \none & \none & \none & \none &  &  \\ \none[3] & &  &  &  &  \\ \none[4] & & &  &  \\ \end{ytableau}}\end{varwidth}};
\end{tikzpicture}
    \caption{The direct sum $\lambda_{1}/\mu_{1} \oplus \lambda_{2}/\mu_{2}$ of the rook matroids on  $\lambda_{1}/\mu_{1} = 54$ and $\lambda_{2}/\mu_{2} = 22$ is the rook matroid on the direct sum of their shapes, $\lambda / \mu = 7754/55$.}
    \label{fig:direct}
\end{figure}
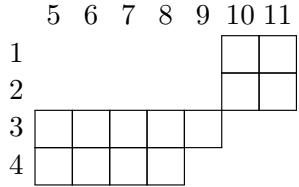

The \defin{conjugate} of a partition $\lambda$ is the partition $\lambda'$ obtained by taking the transpose of the Ferrers diagram of $\lambda$. Duals of rook matroids are simply rook matroids on conjugated skew shapes; the proof is straightforward and hence omitted. 

\begin{proposition}\label{prop:matroid_dual}
Let $\lambda / \mu$ be a skew shape and let $\lambda' / \mu '$ denote its conjugate. Then the following isomorphism holds: $\rookMat^*_{\lambda/\mu} \cong
\rookMat_{\lambda'/\mu'}$.
\end{proposition}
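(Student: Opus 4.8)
The plan is to produce an explicit isomorphism by reflecting the board across its main diagonal. Recall that $\mathcal{B}(\rookMat^*_{\lambda/\mu}) = \{E_{\lambda/\mu} \setminus B : B \in \rookMat_{\lambda/\mu}\}$, and that each basis has the form $R(\rho)\cup C(\rho)$ with $R(\rho)\subseteq[r]$ the occupied rows and $C(\rho)\subseteq[r+1,r+c]$ the unoccupied columns of a non-nesting placement $\rho$. Complementing inside $E_{\lambda/\mu}=[r]\cup[r+1,r+c]$ gives
\[
E_{\lambda/\mu}\setminus\bigl(R(\rho)\cup C(\rho)\bigr)=\bigl([r]\setminus R(\rho)\bigr)\cup\bigl([r+1,r+c]\setminus C(\rho)\bigr),
\]
that is, the set of unoccupied rows together with the set of occupied columns of $\rho$. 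So a dual basis records exactly the opposite data---unoccupied rows and occupied columns---to a primal basis, and reflecting the board is designed to interchange these two roles.

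Conjugating $\lambda/\mu$ transposes the diagram: the cell $(i,r+j)$ of $\lambda/\mu$ corresponds to the cell $(j,c+i)$ of $\lambda'/\mu'$, where the latter shape has $c$ rows (labeled $1,\dotsc,c$) and $r$ columns (labeled $c+1,\dotsc,c+r$). Accordingly I would define the ground-set bijection $\phi\colon E_{\lambda/\mu}\to E_{\lambda'/\mu'}$ by $\phi(r+j)=j$ for $j\in[c]$ and $\phi(i)=c+i$ for $i\in[r]$, so that column labels of $\lambda/\mu$ are sent to row labels of $\lambda'/\mu'$ and vice versa. Writing $\rho^{\mathsf T}$ for the placement on $\lambda'/\mu'$ obtained by reflecting each rook $(i,r+j)$ of $\rho$ to $(j,c+i)$, the first real step is to check that $\rho\mapsto\rho^{\mathsf T}$ is a bijection $\NN_{\lambda/\mu}\to\NN_{\lambda'/\mu'}$. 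This is clean: reflection across the main diagonal carries the skew Ferrers board $\lambda/\mu$ to the skew Ferrers board $\lambda'/\mu'$, and it preserves the relation ``one rook lies South-East of another'' since that relation demands both coordinates of one rook to exceed those of the other, a condition symmetric under swapping the two coordinates. Hence nestings are carried to nestings and non-nesting placements to non-nesting placements.

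Finally I would match the two descriptions. Under $\rho\mapsto\rho^{\mathsf T}$ the occupied columns of $\rho$ become precisely the occupied rows of $\rho^{\mathsf T}$, and the unoccupied rows of $\rho$ become precisely the unoccupied columns of $\rho^{\mathsf T}$; applying $\phi$ and comparing with the displayed complement yields
\[
\phi\bigl(E_{\lambda/\mu}\setminus(R(\rho)\cup C(\rho))\bigr)=R(\rho^{\mathsf T})\cup C(\rho^{\mathsf T}).
\]
By Lemma~\ref{lem:rho_bijection} the right-hand side ranges over all of $\rookMat_{\lambda'/\mu'}$ as $\rho$ ranges over $\NN_{\lambda/\mu}$, so $\phi$ carries $\mathcal{B}(\rookMat^*_{\lambda/\mu})$ bijectively onto $\mathcal{B}(\rookMat_{\lambda'/\mu'})$, which is exactly the asserted isomorphism. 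The only genuinely fiddly part is the label bookkeeping in $\phi$ and in the displayed identity; the structural content---that matroid duality (complementation) mirrors board transposition because it interchanges the occupied-row/unoccupied-column data with the occupied-column/unoccupied-row data---is what drives the argument, while the non-nesting condition survives transposition for free.
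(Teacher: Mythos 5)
Your proof is correct and follows essentially the same route as the paper's: your ground-set bijection $\phi$ is exactly the map $f$ used there (sending row labels $i\mapsto i+c$ and column labels $r+j\mapsto j$), and the content is that complementation swaps occupied-row/unoccupied-column data with unoccupied-row/occupied-column data while transposition preserves the non-nesting condition. You simply spell out the bookkeeping that the paper leaves implicit.
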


The following lemma shows that minors of rook matroids --- in two cases --- are 
straightforward to describe combinatorially.
We make use of this lemma several times throughout the paper; its proof follows from the definitions of deletion and contraction.

\begin{lemma}\label{lem:easyRookMinors}
Let $\lambda / \mu$ be a skew shape and $\rookMat_{\lambda/\mu}$ be the corresponding rook matroid.
Suppose $x$ is a column index and $y$ is a row index of $\lambda/\mu$. Then 
\begin{itemize}
    \item The deletion $\rookMat_{\lambda/\mu} \setminus x$ is isomorphic to
    the rook matroid obtained from the shape $\lambda/\mu$ with column $x$ removed;
    \item The contraction $\rookMat_{\lambda/\mu} / y$ is isomorphic to
    the rook matroid obtained from the shape $\lambda/\mu$ with row $y$ removed;
 \item The bases in the contraction $\rookMat_{\lambda/\mu} / x$ correspond to 
 non-nesting rook placements on $\lambda/\mu$ where column $x$ is occupied by a rook;
 \item The bases in the deletion $\rookMat_{\lambda/\mu} \setminus y$ correspond to 
 non-nesting rook placements on $\lambda/\mu$ where row $y$ is occupied by a rook.
\end{itemize}
\end{lemma}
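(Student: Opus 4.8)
The guiding idea is to read every matroid operation off the encoding $\rho \mapsto R(\rho)\cup C(\rho)$ of Lemma~\ref{lem:rho_bijection}. That bijection supplies a dictionary: for a basis $B = R(\rho)\cup C(\rho)$, a row index $x$ lies in $B$ precisely when row $x$ carries a rook of $\rho$, whereas a column index $y$ lies in $B$ precisely when column $y$ is \emph{empty}. With this in hand the last two bullet points are immediate from the definitions of contraction and deletion in Section~\ref{subsection:prelim_matroids}. Indeed $\mathcal{B}(\rookMat_{\lambda/\mu}/x)=\{B\setminus x : x\in B\}$, and $x\in B$ says exactly that row $x$ is occupied; likewise $\mathcal{B}(\rookMat_{\lambda/\mu}\setminus y)=\{B : y\notin B\}$, and $y\notin B$ says exactly that column $y$ is occupied. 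Transporting these collections back along Lemma~\ref{lem:rho_bijection} yields the two described families of rook placements.

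For the first two bullet points I would argue through the transversal presentation $\mathcal{A}_{\lambda/\mu}=(A_1,\dotsc,A_c)$ from Theorem~\ref{thm:rook_is_transversal}, using two standard facts about transversal matroids \cite{Oxley2011MatroidBook}. First, deleting a ground-set element from a transversal matroid is the same as deleting that element from every set of a presentation. Applied to a row index $x$, this replaces each $A_j$ by $A_j\setminus x=\{\,i\neq x:(i,r+j)\in\lambda/\mu\,\}\cup\{r+j\}$; after the order-preserving relabeling of the surviving indices this is precisely the presentation $\mathcal{A}$ of the shape obtained by erasing row $x$, so $\rookMat_{\lambda/\mu}\setminus x$ is isomorphic to that rook matroid, giving the first bullet. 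Second, contracting an element that lies in exactly one set of a presentation is the same as discarding that set (and the element). A column index $y=r+j_0$ is the distinguished element of $A_{j_0}$ and, as remarked before the lemma, lies in no other $A_j$; contracting it therefore removes $A_{j_0}$ and leaves $(A_j)_{j\neq j_0}$, which up to relabeling is the presentation of the shape with column $y$ erased. This yields the second bullet.

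The one place that rewards care is the behaviour at the boundary, and it is exactly here that I would favour the presentation argument over a purely pictorial ``slide out an empty line'' bijection. Erasing row $x$ can leave a column $r+j$ with no cells at all, namely when that column met $\lambda/\mu$ only in row $x$; then $A_j\setminus x=\{r+j\}$ is a singleton, so $r+j$ is a coloop, matching the fact that such a column is unoccupied in \emph{every} admissible placement, and the ``shape with row $x$ removed'' must be understood to retain this column index. Dually, erasing column $y$ can strand a row whose index then lies in none of the sets $(A_j)_{j\neq j_0}$, making it a loop. The presentation formalism absorbs both degeneracies automatically, whereas a hand-run bijection on rook diagrams would have to track these coloops and loops separately; this bookkeeping is the main, if modest, obstacle. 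As a consistency check one can also note that the first and second bullets are interchanged---as are the third and fourth---by combining the identities $(N\setminus e)^*=N^*/e$ and $(N/e)^*=N^*\setminus e$ with Proposition~\ref{prop:matroid_dual}, under which conjugating $\lambda/\mu$ swaps the roles of rows and columns.
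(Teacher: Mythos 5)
Your proposal is correct, and it is in essence the argument the paper has in mind: the authors dispose of this lemma with ``follows directly from definitions,'' and your write-up simply unpacks those definitions --- the encoding $B=R(\rho)\cup C(\rho)$ makes the third and fourth bullets immediate, while the standard facts about deleting an element from every set of a presentation and contracting an element lying in a unique set of a presentation give the first two. Your attention to the degenerate cases (a column emptied by a row deletion becoming a coloop, a row stranded by a column contraction becoming a loop) is a worthwhile addition, since the paper does allow skew shapes with empty rows and columns and a purely pictorial argument would have to track these separately.
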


Rook matroids can even be shown to be closed under contraction, a fact that can be deduced using the theory developed in~\cite{Toft202NewMinorClosedTransversal}. We omit a proof of this to keep the current paper thematically coherent.    

Despite the lemma and remark above, the property of being minor-closed fails for rook matroids: there are rook matroids whose deletion is not a rook matroid. The deeper reason for this failure is best understood in terms of the observation that minors of fundamental transversal matroids are rarely fundamental transversal. We thank Joe Bonin for suggesting an example that did not rely on exhaustive computer search, and for pointing out that the class of rook matroids on $332/1$-avoiding skew shapes is \emph{not} minor-closed, as was claimed in an earlier version of this manuscript. The next example makes reference to lattice path matroids; see Section~\ref{subsection:lattice_path_matroids} for the definition. 

\begin{example}\label{example:rook_matroid_not_minor_closed}
    Consider $\lambda / \mu = 442/2$, the skew shape shown on the left in Figure~\ref{fig:Q6_geometric_rep} and $M = \rookMat_{\lambda / \mu}$. The matroid $M\setminus 2$ can be seen to be isomorphic to the lattice path matroid $\pathMat_{332/1}$, which is not a rook matroid. This is because  the only possible candidate is $\rookMat_{332/1} \cong Q_{6}$ (proven below), which is a known excluded minor of lattice path matroids. Thus $M\setminus 2$ is not a rook matroid and rook matroids are not minor-closed.
\end{example}    
\begin{figure}[!ht]
    \centering
    \begin{subfigure}{0.30\textwidth}
        \centering
        \begin{tikzpicture}[inner sep=0in,outer sep=0in]
\node (n) {\begin{varwidth}{6cm}{
\ytableausetup{boxsize=1.25em}
         \begin{ytableau} \none & \none[4] & \none[5] & \none[6] & \none[7]  \\ \none[1] & \none & \none &  &  \\ \none[2] &  &  &  &  \\ \none[3] &  &  \\ \end{ytableau}
        }\end{varwidth}};
    \end{tikzpicture}
    \end{subfigure}
    \begin{subfigure}{0.30\textwidth}
\centering
   \begin{tikzpicture}[inner sep=0in,outer sep=0in]
\node (n) {\begin{varwidth}{6cm}{
\ytableausetup{boxsize=1.35em}
\begin{ytableau} \none & \none[4] & \none[5] & \none[6]  \\ \none[1] & \none &  &  \\ \none[2] &  &  &  \\ \none[3] &  &  \\ \end{ytableau}
}\end{varwidth}};
\end{tikzpicture}
\end{subfigure}
\hspace{-1 cm}
~
\begin{subfigure}{0.30\textwidth}
\centering
 \begin{tikzpicture}
\node[dot=6pt, fill=black, label={[font=\Large]left:{$b$}}] at (0, 0) (b) {};
\node[dot=6pt, fill=black, label={[font=\Large]above:{$c$}}] at ($(b)+(30:1cm)$) (c) {};
\node[dot=6pt, fill=black, label={[font=\Large]above:{$e$}}] at ($(c)+(30:1cm)$) (e) {};
\node[dot=6pt, fill=black, label={[font=\Large]below:{$d$}}] at ($(b)+(330:1cm)$) (d) {};
\node[dot=6pt, fill=black, label={[font=\Large]below:{$a$}}] at ($(d)+(330:1cm)$) (a) {};

\node[dot=6pt, fill=black, label={[font=\Large]right:{$f$}}] at ($(b)+(4, 0)$) (f) {};
\draw[very thick,black] (b)-- (c) -- (e);
\draw[very thick,black] (b)-- (d) -- (a);
\end{tikzpicture}

\end{subfigure}
 \caption{Left: rook matroid $M = 442/2$ such that $M \setminus 2$ is not a rook matroid. Middle: rook matroid $\rookMat_{332/1}$. Right: the geometric representation of the $Q_{6}$ matroid; $\rookMat_{332/1}$ and $Q_{6}$ are isomorphic.}
 \label{fig:Q6_geometric_rep}
\end{figure}

\begin{lemma}\label{lem:3321_Q6_isomorphism}
The rook matroid $\rookMat_{332/1}$ is isomorphic to the quaternary matroid $Q_{6}$.
\end{lemma}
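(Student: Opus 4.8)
The plan is to compute both matroids explicitly as rank-three combinatorial geometries on six points and to match their nontrivial lines. First I would invoke Theorem~\ref{thm:rook_is_transversal} to write down the transversal presentation of $\rookMat_{332/1}$. Reading the skew shape $332/1$ with rows labeled $1,2,3$ and columns labeled $4,5,6$ gives the set system $\mathcal{A}_{332/1} = (A_1, A_2, A_3)$ with $A_1 = \{2,3,4\}$, $A_2 = \{1,2,3,5\}$ and $A_3 = \{1,2,6\}$. The rook matroid is the transversal matroid of this presentation; it has rank $3$ (for instance $\{4,5,6\}$ is a transversal, with $4 \in A_1$, $5 \in A_2$, $6 \in A_3$), and one checks it is simple: each element lies in some $A_j$, so there are no loops, and any two distinct elements can be assigned to two distinct sets, so there are no parallel pairs.

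Next I would identify the dependent three-element subsets. Since every basis is a transversal, a three-subset $S$ is a basis precisely when it satisfies Hall's condition for $(A_1,A_2,A_3)$, i.e. admits a system of distinct representatives. The distinguished column elements $4,5,6$ each lie in a single set ($A_1$, $A_2$, $A_3$ respectively), which makes the failure cases transparent: a three-subset fails to be a transversal exactly when it avoids one of the $A_j$ entirely. Running through the $\binom{6}{3}=20$ subsets, the only two that do so are $\{1,5,6\}$, which misses $A_1$, and $\{3,4,5\}$, which misses $A_3$. Hence $\rookMat_{332/1}$ is the simple rank-three matroid whose only nontrivial (at least three-point) lines are the rank-two flats $\{1,5,6\}$ and $\{3,4,5\}$, and these two lines meet in the single point $5$.

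Finally I would compare this to $Q_6$. From the geometric representation in Figure~\ref{fig:Q6_geometric_rep}, $Q_6$ is the simple rank-three matroid on $\{a,b,c,d,e,f\}$ whose nontrivial lines are $\{b,c,e\}$ and $\{a,b,d\}$, meeting in the common point $b$, with $f$ a free point off both lines. A simple rank-three matroid is determined by its family of at-least-three-point lines (every remaining pair spanning a two-point line), so it suffices to exhibit a bijection carrying one nontrivial-line structure to the other. The map $\phi$ defined by $5 \mapsto b$, $2 \mapsto f$, $1 \mapsto c$, $6 \mapsto e$, $3 \mapsto a$, $4 \mapsto d$ sends $\{1,5,6\} \mapsto \{b,c,e\}$ and $\{3,4,5\} \mapsto \{a,b,d\}$, and hence is the required isomorphism $\rookMat_{332/1} \cong Q_6$. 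I expect no genuine obstacle beyond bookkeeping: the whole argument is a finite verification, and the only point requiring care is confirming that $\{1,5,6\}$ and $\{3,4,5\}$ are the complete list of nontrivial lines (equivalently, that no two-element subset is dependent), so that the line-matching genuinely determines the isomorphism rather than merely a partial correspondence.
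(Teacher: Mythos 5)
Your proof is correct and takes essentially the same route as the paper: both arguments identify the three-element circuits of $\rookMat_{332/1}$ (namely $\{1,5,6\}$ and $\{3,4,5\}$) and of $Q_6$ (the two three-point lines in Figure~\ref{fig:Q6_geometric_rep}) and exhibit a bijection matching them, with your version simply writing out the transversal/Hall's-condition verification that the paper compresses into ``it is not hard to see.'' Your map differs from the paper's only by swapping the images of $3$ and $4$ (i.e.\ $a \leftrightarrow d$), which is immaterial since both lie on the same line of $Q_6$ away from the intersection point $b$.
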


\begin{proof}
By checking the two three-element circuits of each matroid, one sees that the map from $\{1,2,\ldots , 6\}$ to $\{a,b,c,d,e,f\}$ defined as $1 \mapsto c, 2 \mapsto f, 3 \mapsto d, 4 \mapsto a, 5 \mapsto b, 6 \mapsto e$ is a matroid isomorphism between $\rookMat_{332/1}$ and $Q_{6}$.
\end{proof}

We note that $Q_{6}$ is an excluded minor for the class of lattice path matroids (see \cite[Theorem 3.1]{Bonin2010ExcludedMinors} where $Q_{6}$ appears in the list as $A_{3}$). For further properties of $Q_{6}$, see \cite[pg. 641]{Oxley2011MatroidBook}. We explore the link between $332/1$-avoiding shapes and lattice path matroids in greater depth in the next section. 
   
\subsection{Relation to lattice path matroids}\label{subsection:lattice_path_matroids}

In what follows we collect enumerative and matroidal results linking rook matroids to lattice path matroids, a well-studied family of transversal matroids that also arise from skew shapes. See~\cite{Bonin2003lattice, Bonin2006lattice} for an introduction to lattice path matroids. The goal of this subsection is to identify precisely when a rook matroid and a lattice path matroid on the same skew shape are isomorphic. 

 Let $m, r \geq 0$ and $U, L$ be two lattice paths from $(0, 0)$ to $(m, r)$ such that $U$ never goes below $L$. (All lattice paths in this paper only consist of North and East steps.) Let $\lambda / \mu$ be the skew shape defined by the region enclosed by $U$ and $L$. Lattice paths can be uniquely identified by recording the indices of their North steps. 
 
 Denote by $M[U, L]$ the transversal matroid on $[m+r]$ with presentation given by $\mathcal{E} = (N_{1}, \ldots , N_{r})$ where $N_{j}$ are the North step sets defined by \[N_{j} = \{i: \text{$i$ is the index of the $j^\thsup$ North step of some lattice path $P$ in $\lambda/\mu$}\}.\]  

\begin{definition}
    A \defin{lattice path matroid} (LPM) is a matroid that is isomorphic to the matroid $M[U, L]$ for some lattice paths $U, L$. 
\end{definition}

To emphasize the perspective of the matroid arising from a skew shape we will instead use the notation $\defin{\pathMat_{\lambda / \mu}} \coloneqq M[U, L]$,   where $U, L$ are respectively the upper and lower boundary paths of the diagram defined by the (possibly degenerate) skew shape $\lambda / \mu$. After deleting empty rows and columns of $\lambda/\mu$, the boundary paths $U$ and $L$ share no steps, so the associated lattice path matroid is loopless and coloopless. Note that when $\mu$ is the empty partition, the lattice path matroid $\pathMat_{\lambda}$ is called a \defin{Schubert matroid} or generalized Catalan matroid~\cite{Bonin2006lattice}.

Our first result connecting rook matroids and lattice path matroids is a straightforward bijection that naturally leads to the question of when they are isomorphic. 

For any skew shape $\lambda/\mu$,
it is easy to see that $\pathMat_{\lambda/\mu}$ and $\rookMat_{\lambda/\mu}$
have equal cardinality for the following reason. Define an \defin{inner valley} of a lattice path in $\lambda/\mu$ to be a valley such that neither its East step nor its North step lies on the upper boundary of $\lambda/\mu$. Then, given any lattice path inside $\lambda/\mu$, place a rook in each inner valley of the path. This defines a non-nesting rook placement, and this process is clearly reversible. We record this fact below for use later on in Section~\ref{section:rooks_as_linear_extensions}.

\begin{proposition}\label{lem:setBijectionPathRook}
For any skew shape $\lambda/\mu$, the map $T_{\lambda/\mu}: \mathrm{NN}_{\lambda/\mu} \to \pathMat_{\lambda/\mu}$ defined by fixing the inner valleys of the lattice path precisely at cells containing rooks is a bijection.
\end{proposition}

In particular, when $\lambda$ is the staircase $(n,n-1,\dotsc,1)$, we obtain the bijection that yields the formula in Example~\ref{eg:Narayana_poly}. This connection facilitates an exact count of the number of non-nesting rook placements contained 
in $\lambda / \mu$ via the Lindstr\"{o}m--Gessel--Viennot lemma; see~\cite[Theorem 10.7.1]{Krattenthaler2015} and \cite[Eq. (6)]{StanleyPitman2002}, for example.  

The bijection in Proposition~\ref{lem:setBijectionPathRook} is not a matroid isomorphism in general. The goal for the remainder of this 
subsection is to characterize when
$\rookMat_{\lambda / \mu}$ and $\pathMat_{\lambda / \mu}$ are isomorphic. 
As we noted in Section~\ref{subsection:structural_results}, the obstruction to this isomorphism comes from the skew shape $332/1$ where the lattice path matroid structure and rook matroid structure are not isomorphic. The more surprising fact is that this is the only obstruction for a rook matroid to be isomorphic to a lattice path matroid. 

The next sequence of results develops the tools needed to define this matroid isomorphism using the combinatorics of skew shapes. The first notion involves dividing up the skew shape into rectangles. 

\begin{definition}
    The \defin{rectangular decomposition} of a skew shape $\lambda / \mu$ is the collection of rectangles partitioning the skew shape, each of which is formed by grouping together columns that share the same set of row indices. 
\end{definition}

An example of the rectangular decomposition of a skew shape is shown in Figure~\ref{fig:fatSnakeExample_Ferrers}. We use the rectangular decomposition to give a characterization of \defin{$332/1$-avoiding} skew shapes, that is the class of skew shapes \emph{not} containing $332/1$ as a subshape. The following proposition gives three ways of describing this class.

\begin{proposition}\label{prop:q6av}
Let $\lambda / \mu$ be a connected skew shape. 
Then the following are equivalent:
\begin{enumerate}[label=(\roman*)]
  \item The shape $\lambda / \mu$ is $332/1$-avoiding.
  \item The shape $\lambda / \mu$ can 
  be built from the skew shape consisting of a single cell 
  by using a sequence of the following four operations: (I)
  duplicating the first row, (II) duplicating the last column, (III)
  adding a box to the first row, (IV)
  creating a new first row with a single box.
  These four operations are illustrated on the shape below: 
  \[
  \ytableausetup{boxsize=0.8em}
     \ydiagram{2+2,4,4,2} \text{ gives } 
     \ydiagram{2+2,2+2,4,4,2},\;
     \ydiagram{2+3,5,5,2},\;
     \ydiagram{3+2,4,4,2},\;
     \ydiagram{3+1,2+2,4,4,2} \quad \text{respectively}.
  \]
  \item The rectangular decomposition of $\lambda / \mu$ satisfies the property that for any three consecutive rectangles $R_{1}, R_{2}, R_{3}$, we \emph{do not simultaneously have}
  that 
  \begin{enumerate}
     \item the top of $R_{1}$ 
  is strictly above the bottom of $R_{3}$,
  \item the top of $R_{1}$ is strictly below the top of $R_{2}$,
  \item the bottom of $R_{3}$ is strictly above the bottom of $R_{2}$.
  \end{enumerate}
\end{enumerate}
\end{proposition}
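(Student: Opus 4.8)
The plan is to prove the three-way equivalence by establishing $(\mathrm{i})\Leftrightarrow(\mathrm{iii})$ as the combinatorial core and then linking $(\mathrm{ii})$ to $(\mathrm{i})$ in both directions. First I would record the basic geometry of a skew shape: in any $\lambda/\mu$ the cells of each column occupy a contiguous interval of rows, and if $t_j,b_j$ denote the top and bottom row of column $j$ then both $j\mapsto t_j$ and $j\mapsto b_j$ are weakly decreasing (English convention, rows indexed downward). The rectangular decomposition is exactly the grouping of columns by the pair $(t_j,b_j)$; consecutive rectangles differ in $t$, in $b$, or in both, and connectedness is equivalent to $t_k\le b_{k+1}$ for every adjacent pair. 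With this in hand I would reformulate containment: $\lambda/\mu$ contains $332/1$ as a subshape if and only if there are columns $c_1<c_2<c_3$ with $t_{c_2}<t_{c_1}$, $b_{c_2}>b_{c_3}$ and $t_{c_1}\le b_{c_3}$, the last inequality being the vertical overlap that houses the common middle row. This is a direct translation of the cell pattern, choosing the three rows as $t_{c_1}-1$, any row in $[t_{c_1},b_{c_3}]$, and $b_{c_3}+1$.

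For $(\mathrm{i})\Leftrightarrow(\mathrm{iii})$ the easy direction is $\neg(\mathrm{iii})\Rightarrow\neg(\mathrm{i})$: a consecutive forbidden triple $R_1,R_2,R_3$ is precisely a witness $c_1\in R_1,c_2\in R_2,c_3\in R_3$ for the reformulation, so $332/1$ appears. The substantial direction is $\neg(\mathrm{i})\Rightarrow\neg(\mathrm{iii})$, where a witness living in possibly far-apart rectangles must be localized to three consecutive ones. Here I would use the fact that consecutive rectangles are distinct, so every step of the decomposition strictly drops $t$, strictly drops $b$, or both---there is no step that does nothing. A top-drop occurs somewhere between the rectangles of $c_1$ and $c_2$ and a bottom-drop somewhere between those of $c_2$ and $c_3$; walking rightward from a top-drop, if no top-drop were ever immediately followed by a bottom-drop then every intermediate step would have to drop the top and leave the bottom fixed, contradicting the existence of the later bottom-drop. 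This produces adjacent steps $m,m+1$ dropping top then bottom, and the overlap requirement for $R_m,R_{m+1},R_{m+2}$ comes for free from monotonicity, since $t_m\le t_{c_1}\le b_{c_3}\le b_{m+2}$. I expect this localization to be the main obstacle, being the one genuinely global-to-local step.

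To incorporate $(\mathrm{ii})$ I would prove $(\mathrm{ii})\Rightarrow(\mathrm{i})$ and $(\mathrm{i})\Rightarrow(\mathrm{ii})$, using throughout that $332/1$-avoidance is hereditary under deleting cells. For $(\mathrm{ii})\Rightarrow(\mathrm{i})$, each of the four operations either duplicates the top row or the last column or adds a single box in the top row; a purported new $332/1$ cannot use both copies of a duplicated row or column, because the three rows (resp.\ columns) of the pattern are pairwise distinct, so deleting the spare copy returns the pattern to the smaller shape. A newly added top-row box cannot serve as either filled cell of the pattern's top row either, since its column meets no lower row and the pattern demands a cell directly below. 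Hence the operations preserve avoidance, and the single cell avoids $332/1$ trivially.

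For $(\mathrm{i})\Rightarrow(\mathrm{ii})$ I would induct on the number of cells, peeling off a reverse operation from the top-right. If the rightmost rectangle has width $\ge2$, remove a duplicate last column (reverse of (II)); otherwise I inspect the top row. If it overhangs row two on the right ($\lambda_1>\lambda_2$) then it must have width $\ge2$, since a single overhanging box would be isolated, and I remove the top-right corner box (reverse of (III)); if it coincides with row two ($\lambda_1=\lambda_2$, $\mu_1=\mu_2$) I strip the duplicated top row (reverse of (I)); and in the remaining case ($\lambda_1=\lambda_2$, $\mu_1>\mu_2$) avoidance forces the top row to be a single box, which is stripped as the reverse of (IV). The one place avoidance is essential is precisely this last case: a top row of width $\ge2$ lying strictly inside row two, together with a width-one rightmost rectangle, manufactures exactly a top-drop followed by a bottom-drop with overlap, i.e.\ a $332/1$. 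Since every reverse operation only deletes cells, the smaller shape stays connected and $332/1$-avoiding, so the induction closes; proving that a reduction always exists (via this case analysis on the top row and the rightmost rectangle) is the main obstacle of this half.
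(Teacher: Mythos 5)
Your proposal is correct, and it takes a noticeably different (and in one respect more complete) route than the paper. The paper's proof runs the cycle $(i)\Rightarrow(iii)$ via a minimality argument, $(ii)\Rightarrow(iii)$ by induction on the four operations, and $(iii)\Rightarrow(i)$ by locating an inner and an outer corner; as written it never establishes an implication \emph{into} $(ii)$, whereas your peeling induction for $(i)\Rightarrow(ii)$ supplies exactly that missing arc, and your case analysis on the top row and the rightmost rectangle (including the observation that $\lambda_1=\lambda_2$, $\mu_1>\mu_2$ together with avoidance forces a width-one top row) is the right way to show a reverse operation always exists. Your treatment of $(i)\Leftrightarrow(iii)$ via the weakly decreasing top/bottom profiles $t_j,b_j$ is also cleaner than the paper's: the reformulation of $332/1$-containment as the existence of $c_1<c_2<c_3$ with $t_{c_2}<t_{c_1}$, $b_{c_2}>b_{c_3}$, $t_{c_1}\le b_{c_3}$ is a faithful translation of the paper's inner/outer-corner language, and the localization step (first bottom-drop after a top-drop must be immediately preceded by a top-drop, with the overlap inequality $t(R_m)\le t_{c_1}\le b_{c_3}\le b(R_{m+2})$ coming from monotonicity) makes rigorous what the paper only asserts. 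What your approach buys is an explicit, checkable argument for the global-to-local step and a genuinely complete three-way equivalence; what the paper's buys is brevity and a formulation (operations on rectangles) that feeds directly into the spine-path construction used later.

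One small imprecision: your justification that a newly added top-row box cannot participate in a new occurrence (``its column meets no lower row'') is correct for operation (III) but not for (IV), where the new single box sits above the last column of the old first row and that column does meet lower rows. For (IV) the right reason is simpler: the pattern's top row requires two filled cells in distinct columns, while the newly created first row contains only one cell, so no occurrence can use the new row at all. This is a one-line fix and does not affect the correctness of the argument.
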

\begin{proof}
Let $\lambda/\mu$ occupy $r$ rows and $c$ columns. 
The equivalences are clear if either $r \leq 2$ or $c \leq 2$, so 
assume for the remainder of this proof that $r, c \geq 3$. 

$(i) \implies (ii)$ 
Let $\lambda/\mu$ be a skew shape not satisfying (ii). 
We must show that the diagram must then contain $332/1$ as a subshape. We may assume that $\lambda/\mu$ is a minimal shape having this property. (Here we use minimal with respect to the size $|\lambda/\mu|$ of the skew shape.)

By minimality, every proper subshape of $\lambda/\mu$ satisfies (ii), which implies that none of the inverses of the four operations in (ii) can be applied to $\lambda/\mu$. Hence $\lambda/\mu$ satisfies the following properties:
\begin{enumerate}
    \item The first two rows of $\lambda/\mu$ have different length. 
    \item The last two columns  of $\lambda/\mu$ have different length. 
    \item The first and second row have their last cells in the same column.
    \item The first row contains more than one box.
\end{enumerate}

Properties (1) and (3) imply that $\lambda/\mu$ has an inner corner at the first row; 
let the corresponding column index of that corner be $k$. 
Let the common column index in which the first two rows have their last cell be $j$. 
The last two columns of the skew shape will then be $j-1$ and $j$. By property (2), columns $j-1$ and $j$ have different length which implies that $\lambda/\mu$ has an outer corner in column $j$; let the corresponding row index of that corner be $s \geq 3$, by the assumption on the number of rows and columns. Then $\lambda/\mu$ contains $332/1$ as a subshape since it has an inner corner at $(1, k)$, an outer corner at $(s, j)$ and columns $k$ to $j-1$ --- which by the assumption $r, c \geq 3$ are at least two in number --- all have the same length. 

\begin{figure}[!ht]
\[
\scalebox{1.4}{
\begin{tikzpicture}[baseline=(current bounding box.center)]
\begin{scope}
\clip  (0,0)--(2,0)--(2,1)--(3,1)--(3,3)--(1,3)--(1,2)--(0,2)--cycle;
\fill[gray,opacity=0.3] (0, 0) rectangle (1, 2);
\fill[gray,opacity=0.3] (2, 1) rectangle (3, 3);
\draw[step=1em,gray](0,0) grid (3,3);
\end{scope}
\draw[black,line width=0.9] 
 (0,0)--(2,0)--(2,1)--(3,1)--(3,3)--(1,3)--(1,2)--(0,2)--cycle;
\end{tikzpicture}
}
\]
\caption{
Rectangular decomposition of $332/1$ shown with shaded rectangles. The skew shape $332/1$ is characterized by its rectangles satisfying properties (a), (b), (c) in point (iii) of Proposition~\ref{prop:q6av}.
}\label{fig:rectangular_decomp_Q6}
\end{figure}
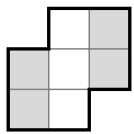

$(ii) \implies (iii)$ Proceed by induction 
on $|\lambda/\mu|$; the base case holds vacuously. 
Assume the result holds for all skew shapes with size less than $|\lambda/\mu|$. 
Let the skew shape $\lambda_{1}/\mu_{1}$ be the shape before $\lambda/\mu$ to which one of the operations (I)-(IV) in point (ii) were applied and 
let $\mathcal{D}$ and $\mathcal{D}'$ respectively be the collection of rectangles in the rectangular decompositions of $\lambda/\mu$ and $\lambda_{1}/\mu_{1}$. 
Consider the operation that transformed $\lambda_{1}/\mu_{1}$ to $\lambda/\mu$. 
If it was (I), then duplicating the top row maintains the relative heights of any three rectangles in $\mathcal{D}'$ and hence (iii) holds for $\mathcal{D}$. 
If it was (II), then only the width of the last rectangle in $\mathcal{D}$ changes while its heights stay the same, so (iii) holds for $\mathcal{D}$. 
If the last operation was (III) and the number of rectangles 
in $\mathcal{D}'$ increases, then the location of the bottom of 
the last rectangle in $\mathcal{D}$ strictly increases, which ensures 
that property (a) and (b) in point (iii) cannot hold simultaneously 
with $R_{3}$ as this last rectangle. 
Finally, if the last operation is (IV), then in the non-trivial case, 
$\mathcal{D}$ has one more rectangle than 
$\mathcal{D}'$ but this rectangle has the same bottom as the last rectangle in $\mathcal{D}'$ so (c) does not hold.

$(iii) \implies (i)$ Suppose $\lambda/\mu$ contains $332/1$. We must show that there are three consecutive rectangles in the rectangular decomposition of $\lambda/\mu$ such that \emph{(a)}, \emph{(b)}, \emph{(c)} hold for these. 

Since $\lambda/\mu$ contains $332/1$ as a subshape, we can then find an inner corner $(i_{1}, j_{1})$ and an outer corner $(i_{2}, j_{2})$ of $\lambda/\mu$ such that the subshape of $\lambda/\mu$ consisting of rows $i_{1}$ to $i_{2}$ and columns $j_{1}+1$ to $j_{2}-1$ is a rectangle; call this $R_{2}$. 
Then if $R_{1}$ is the rectangle consisting of column $j_{1}$ and $R_{3}$ is the rectangle consisting of column $j_{2}$, it is clear that rectangles $R_{1}, R_{2}, R_{3}$ satisfy properties (a), (b), (c). 
See Figure~\ref{fig:rectangular_decomp_Q6} for an example of this on $332/1$ itself. This concludes the proof. 
\end{proof}

We now introduce a tool that will give us a way to interpolate between rook placements and lattice paths. The idea is to represent a lattice path by a permutation the $i^{\thsup}$ entry of which keeps track of which row or column index of the skew shape the $i^{\thsup}$ step of the lattice path occupies, as counted from the bottom. 

\begin{definition}[Path permutation]\label{def:path_permutation}
Let $\lambda/\mu$ be a skew shape with $r$ rows and $c$ columns labeled in the usual way. The \defin{path permutation} corresponding to $L$ is $\psi_{L} = \psi_{1}\ldots \psi_{r+c} \in \mathfrak{S}_{r+c}$, where:
\[
\psi_{i} = \begin{cases}
    \text{column index of the column occupied by step $i$} \quad &\text{if $i$ is an East step,} \\ 
    \text{row index of the row occupied by step $i$} \quad &\text{if $i$ is a North step.}
\end{cases}
\]
\end{definition}

\begin{example}
A lattice path $L$ and its corresponding permutation are shown below:
\[
\begin{tikzpicture}[baseline=(current bounding box.center)]
\begin{scope}
\clip (0,0)--(3,0)--(3,2)--(5,2)--(5,4)--(7,4)--(7,7)--(5,7)--(5,6)--(0,6)--cycle;
\draw[step=1em,gray](0,0) grid (8,8);
\end{scope}
 \draw[black] (0,0)--(3,0)--(3,2)--(5,2)--(5,4)--(7,4)--(7,7)--(5,7)--(5,6)--(0,6)--cycle;

\draw[blue,line width=1.2pt] (0,0)--(1,0)--(1,1)--(1,2)--(1,3)--(2,3)--(2,4)--(3,4)--(3,5)--(4,5)--(4,6)--(7,6)--(7,7);

\draw (-0.5,0.5) node  {$\scriptstyle{7}$};
\draw (-0.5,1.5) node  {$\scriptstyle{6}$};
\draw (-0.5,2.5) node  {$\scriptstyle{5}$};
\draw (-0.5,3.5) node  {$\scriptstyle{4}$};
\draw (-0.5,4.5) node  {$\scriptstyle{3}$};
\draw (-0.5,5.5) node  {$\scriptstyle{2}$};
\draw (-0.5,6.5) node  {$\scriptstyle{1}$};
\draw ( 0.5,7.5) node  {$\scriptstyle{8}$};
\draw ( 1.5,7.5) node  {$\scriptstyle{9}$};
\draw ( 2.5,7.5) node  {$\scriptstyle{10}$};
\draw ( 3.5,7.5) node  {$\scriptstyle{11}$};
\draw ( 4.5,7.5) node  {$\scriptstyle{12}$};
\draw ( 5.5,7.5) node  {$\scriptstyle{13}$};
\draw ( 6.5,7.5) node  {$\scriptstyle{14}$};
\end{tikzpicture}
\qquad
\left[
\begin{array}{cccccccccccccc}
  \scriptstyle{1} & \scriptstyle{2} & \scriptstyle{3} & \scriptstyle{4} & \scriptstyle{5} & \scriptstyle{6} & \scriptstyle{7} & \scriptstyle{8} & \scriptstyle{9} & \scriptstyle{10} & \scriptstyle{11} & \scriptstyle{12} & \scriptstyle{13}  & \scriptstyle{14} \\
  \scriptstyle{8} & \scriptstyle{7} & \scriptstyle{6} & \scriptstyle{5} & \scriptstyle{9} & \scriptstyle{4}  & \scriptstyle{10} & \scriptstyle{3} & \scriptstyle{11} & \scriptstyle{2} & \scriptstyle{12} & \scriptstyle{13} & \scriptstyle{14} & \scriptstyle{1}
 \end{array}
 \right].
\]

\end{example}

We now define a special lattice path inside $\lambda/\mu$ that only exists when $\lambda/\mu$ is $332/1$-avoiding. The permutation corresponding to this path turns out to be precisely the bijection that will induce the matroid isomorphism between $\pathMat_{\lambda/\mu}$ and $\rookMat_{\lambda/\mu}$ shown ahead. In Section~\ref{section:rooks_as_linear_extensions}, the skew shape is given a different labeling; the path permutation with respect to this labeling plays a crucial role in the proof of the main result in that section.    

\begin{definition}[Spine path]\label{definition:spine_path}
Suppose that $\lambda/\mu$ is a $332/1$-avoiding skew shape
contained in an $r \times c$ rectangle.
Let $R_1,\dotsc,R_k$ denote the rectangles in the rectangular decomposition of $\lambda/\mu$. (Note that these rectangles satisfy item (iii) in Proposition~\ref{prop:q6av}.)
We construct a lattice path $L$
of length $r+c$ as follows. 

Start from the lower left hand corner of $R_1$.
\begin{enumerate}
\item[\textrm{(i)}] If there is only one rectangle $(k=1)$ then 
let $L$ be the North--East boundary of $R_{1}$.

 \item[\textrm{(ii)}] 
 If the bottom of $R_2$ is not at the same level as the bottom of $R_1$,
 take North steps until the bottom of $R_2$ is reached,
 and then East steps until
 the lower left corner of $R_2$ is reached.
 Recursively construct the rest of $L$ from here based on the shape $R_2,R_3,\dotsc,R_k$.

\item[\textrm{(iii)}] If $k \geq 3$, and if the bottom of $R_1$ and $R_2$ are the same,
and the top of $R_1$ is at the same level as the bottom of $R_3$,
then take North steps until the top of $R_1$ is reached,
and then take East steps until the bottom left corner of $R_3$
is reached. 
Recursively construct the rest of $L$ based on the shape $R_3,\dotsc,R_k$.

\item[\textrm{(iv)}] In any other case, follow the North--East border of $R_1$.
This ends somewhere on the left side of $R_2$, say at $(p,q)$.
Now construct the rest of $L$ using the part of the shape which is
North--East of $(p,q)$.

Observe that we apply this case if (a) $k=2$,
(b) the bottoms of $R_1$, $R_2$ and $R_3$ are the same,
or (c) when the bottom of $R_3$ is strictly above the top of $R_2$.

(Observe that the case when the bottom of $R_3$ is strictly below the top of $R_1$ does not appear, since this would violate the $332/1$-avoiding property, see Proposition~\ref{prop:q6av}, so we have covered all cases.)
\end{enumerate}
See Figure~\ref{fig:fatSnakeCases} for an illustration of the four cases.

The resulting lattice path is called the \defin{spine path} of $\lambda/\mu$.
By definition, the spine path is always contained inside $\lambda/\mu$.
\end{definition}

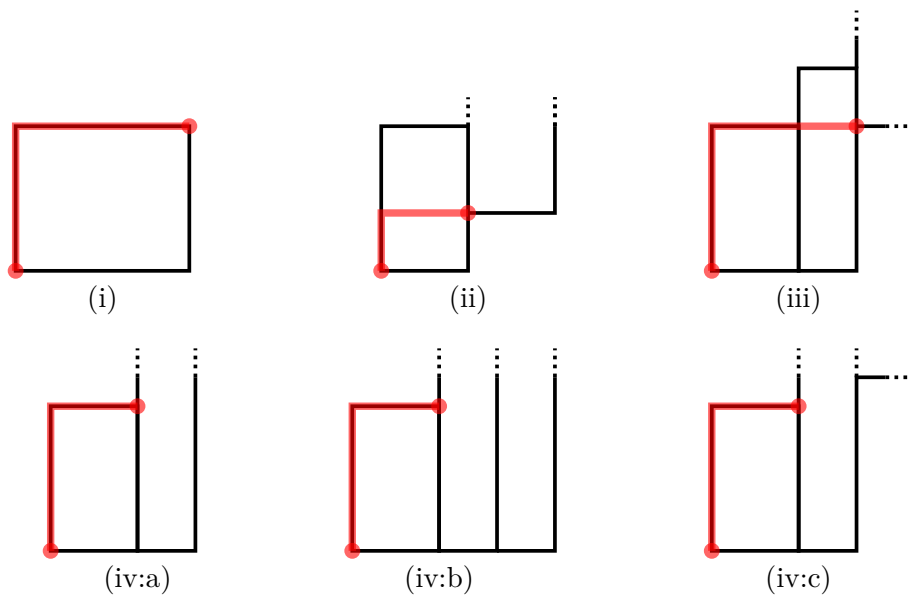
\begin{figure}[!ht]
\begin{align*}
\begin{tikzpicture}
\draw[line width=1.4pt] (0,0)--(6,0)--(6,5)--(0,5)--cycle;
\draw[line width=2.8pt,red,opacity=0.6] (0,0)--(0,5)--(6,5);
\fill[red,opacity=0.6] (0,0) circle (3pt);
\fill[red,opacity=0.6] (6,5) circle (3pt);
\draw (3,-1) node  {(i)};
\end{tikzpicture}
&&
\begin{tikzpicture}
 \draw[line width=1.4pt] (0,0)--(3,0)--(3,5)--(0,5)--cycle;
 \draw[line width=1.4pt] (3,2)--(6,2)--(6,5);
 \draw[line width=1.4pt,dotted] (3,5)--(3,6);
 \draw[line width=1.4pt,dotted] (6,5)--(6,6);
 \draw[line width=1.4pt,dotted] (6,5)--(6,6);
\draw[line width=2.8pt,red,opacity=0.6] (0,0)--(0,2)--(3,2);
\fill[red,opacity=0.6] (0,0) circle (3pt);
\fill[red,opacity=0.6] (3,2) circle (3pt);
 \draw (3,-1) node  {(ii)};
\end{tikzpicture}
&&
\begin{tikzpicture}
 \draw[line width=1.4pt] (0,0)--(3,0)--(3,5)--(0,5)--cycle;
 \draw[line width=1.4pt] (3,0)--(5,0)--(5,7)--(3,7)--cycle;
 \draw[line width=1.4pt] (5,7)--(5,8);
 \draw[line width=1.4pt] (5,5)--(6,5);
 \draw[line width=1.4pt,dotted] (5,8)--(5,9);
 \draw[line width=1.4pt,dotted] (6,5)--(7,5);
 \draw[line width=2.8pt,red,opacity=0.6] (0,0)--(0,5)--(5,5);
 \fill[red,opacity=0.6] (0,0) circle (3pt);
 \fill[red,opacity=0.6] (5,5) circle (3pt);
 \draw (3,-1) node  {(iii)};
\end{tikzpicture}
\\
\begin{tikzpicture}
 \draw[line width=1.4pt] (0,0)--(3,0)--(3,5)--(0,5)--cycle;
 \draw[line width=1.4pt] (3,6)--(3,0)--(5,0)--(5,6);
 \draw[line width=1.4pt,dotted] (3,6)--(3,7);
 \draw[line width=1.4pt,dotted] (5,6)--(5,7);
 \draw[line width=2.8pt,red,opacity=0.6] (0,0)--(0,5)--(3,5);
 \fill[red,opacity=0.6] (0,0) circle (3pt);
 \fill[red,opacity=0.6] (3,5) circle (3pt);
 \draw (3,-1) node  {(iv:a)};
\end{tikzpicture}
&& 
\begin{tikzpicture}
 \draw[line width=1.4pt] (0,0)--(3,0)--(3,5)--(0,5)--cycle;
 \draw[line width=1.4pt] (3,6)--(3,0)--(5,0)--(5,6);
 \draw[line width=1.4pt] (5,0)--(7,0)--(7,6);
 \draw[line width=1.4pt,dotted] (3,6)--(3,7);
 \draw[line width=1.4pt,dotted] (7,6)--(7,7);
 \draw[line width=1.4pt,dotted] (5,6)--(5,7);
 \draw[line width=2.8pt,red,opacity=0.6] (0,0)--(0,5)--(3,5);
 \fill[red,opacity=0.6] (0,0) circle (3pt);
 \fill[red,opacity=0.6] (3,5) circle (3pt);
 \draw (3,-1) node  {(iv:b)};
\end{tikzpicture}
&& 
\begin{tikzpicture}
 \draw[line width=1.4pt] (0,0)--(3,0)--(3,5)--(0,5)--cycle;
 \draw[line width=1.4pt] (3,6)--(3,0)--(5,0)--(5,6)--(6,6);
 \draw[line width=1.4pt,dotted] (3,6)--(3,7);
 \draw[line width=1.4pt,dotted] (6,6)--(7,6);
 \draw[line width=1.4pt,dotted] (5,6)--(5,7);
 \draw[line width=2.8pt,red,opacity=0.6] (0,0)--(0,5)--(3,5);
 \fill[red,opacity=0.6] (0,0) circle (3pt);
 \fill[red,opacity=0.6] (3,5) circle (3pt);
 \draw (3,-1) node  {(iv:c)};
\end{tikzpicture}
\end{align*}
\caption{
The four cases for the construction of the spine path  (shown in thick red) detailed in Definition~\ref{definition:spine_path}.
}\label{fig:fatSnakeCases}
\end{figure}

\begin{example}
The spine path is particularly easy to construct for Ferrers shapes, 
see the first shape in Figure~\ref{fig:fatSnakeExample_Ferrers}.
\begin{figure}[!ht]
\[
\begin{tikzpicture}[baseline=(current bounding box.center)]
\begin{scope}
\clip (0,0)--(2,0)--(2,1)--(5,1)--(5,4)--(6,4)--(6,6)--(7,6)--(7,7)--(0,7)--cycle;
\fill[gray,opacity=0.3] (0, 0) rectangle (2, 7);
\fill[gray,opacity=0.3] (5, 4) rectangle (6, 7);
\draw[step=1em,gray](0,0) grid (7,7);
\end{scope}
\draw[black,line width=0.9] 
(0,0)--(2,0)--(2,1)--(5,1)--(5,4)--(6,4)--(6,6)--(7,6)--(7,7)--(0,7)--cycle;

\draw[red,opacity=0.6,line width=2.8pt] 
(0,0)--(0,1)--(2,1)--(2,4)--(5,4)--(5,6)--(6,6)--(6,7)--(7,7);
\fill[red,opacity=0.6] (0,0) circle (3pt);
\fill[red,opacity=0.6] (7,7) circle (3pt);
\end{tikzpicture}
\qquad 
\begin{tikzpicture}[baseline=(current bounding box.center)]
\begin{scope}
\clip (0,0)--(2,0)--(2,1)--(4,1)--(4,3)--(6,3)--(6,5)--(7,5)--(7,6)--(9,6)--(9,7)--(5,7)--(5,5)--(2,5)--(2,3)--(0,3)--cycle;
\fill[gray,opacity=0.3] (0, 0) rectangle (2, 3);
\fill[gray,opacity=0.3] (4, 3) rectangle (5, 5);
\fill[gray,opacity=0.3] (6, 5) rectangle (7, 7);
\fill[gray,opacity=0.3] (8, 6) rectangle (9, 7);
\draw[step=1em,gray](0,0) grid (9,7);
\end{scope}
\draw[black,line width=0.9] 
 (0,0)--(2,0)--(2,1)--(4,1)--(4,3)--(6,3)--(6,5)--(7,5)--(7,6)--(9,6)--(9,7)--(5,7)--(5,5)--(2,5)--(2,3)--(0,3)--cycle;
\draw[red,opacity=0.6,line width=2.8pt] 
(0,0)--(0,1)--(2,1)--(2,3)--(4,3)--(4,5)--(6,5)--(6,6)--(7,6)--(7,7)--(9,7);
\fill[red,opacity=0.6] (0,0) circle (3pt);
\fill[red,opacity=0.6] (9,7) circle (3pt);
\end{tikzpicture}
\]
\caption{
Two examples of the spine path in two $332/1$-avoiding skew shapes; the rectangular decomposition of the shapes is indicated by the shading.
}\label{fig:fatSnakeExample_Ferrers}
\end{figure}
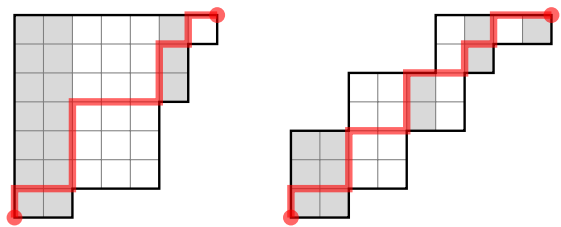
\end{example}

Below, when we refer to a North-East segment of $L$, we mean the word $\mathtt{NE}$ as a subword of $L$ when written as a sequence of $\mathtt{N}$s and $\mathtt{E}$s.

\begin{lemma}\label{lem:spinePathRectangles}
Suppose $L$ is the spine path of a $332/1$-avoiding skew shape $\lambda/\mu$,
and consider a  
corner
of the spine path formed by a North-East segment of $L$ occurring in row $p$ and column $q$ of $\lambda/\mu$. Then the subshape $\{(i, j) \in \lambda/\mu: i \geq p, j \geq q\}$ of $\lambda/\mu$ is a rectangle. 

Similarly, if an East--North corner of $L$ occurs in column $q$ and row $p$, then the subshape $\{(i, j) \in \lambda/\mu: i \leq p, j \leq q\}$ of $\lambda/\mu$ is a rectangle.
\end{lemma}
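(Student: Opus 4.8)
The plan is to read off, corner by corner, the cell of $\lambda/\mu$ that each turn of $L$ determines, and then to induct along the recursive construction of Definition~\ref{definition:spine_path}. First I would record the dictionary. In a North--East corner the North step is the left edge of a column $q$ and the East step is the top edge of a row $p$, so the turn occurs at the top-left vertex of the cell $(p,q)$; this is the cell ``in row $p$ and column $q$''. Dually, in an East--North corner the East step is the bottom edge of a row $p$ and the North step is the right edge of a column $q$, and the turn is the bottom-right vertex of $(p,q)$. With the dictionary fixed, one half of each rectangle claim is free from the skew shape structure alone: for $S=\{(i,j)\in\lambda/\mu: i\ge p,\ j\ge q\}$ the rows meeting $S$ form a contiguous interval $[p,i^\ast]$ with $i^\ast=\max\{i:\lambda_i\ge q\}$ because $\lambda$ is weakly decreasing, and the left ends of these rows all align at column $q$ because $\mu$ is weakly decreasing and $(p,q)\in\lambda/\mu$; hence $S$ is a rectangle if and only if all rows of $[p,i^\ast]$ share the same right end. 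The dual reduction applies to East--North corners and the North-West region.

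Next I would induct on the number $k$ of rectangles in the rectangular decomposition, tracking the cases of Definition~\ref{definition:spine_path}. The base case $k=1$ is immediate, since $L$ is the North--East boundary of $R_1$, with a single North--East corner whose South-East region is all of $R_1$ and no East--North corner. For $k\ge 2$ each of cases (ii)--(iv) writes $L$ as an initial portion inside the leading rectangle $R_1$ followed by the spine path of the strictly smaller skew shape $\Sigma$ obtained by deleting the processed columns; since deleting columns cannot create a $332/1$ subshape, $\Sigma$ is again $332/1$-avoiding and the inductive hypothesis applies to it. The initial portion introduces exactly two new corners: the North--East turn joining the initial North and East runs, and the East--North ``junction'' where the initial East run meets the recursive path (which always begins with a North step). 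A direct check in each case --- using that the tops and bottoms of the rectangles are weakly monotone across the decomposition --- shows that the South-East region of the new North--East corner and the North-West region of the junction corner are rectangles cut out of $R_1$.

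It then remains to treat the corners inherited from the recursive spine path on $\Sigma$, where the inductive hypothesis already gives that the corresponding region inside $\Sigma$ is a rectangle. For a North--East corner this finishes the argument: its South-East region lies in columns strictly to the right of $R_1$, so reinstating $R_1$ does not change it. The subtle case --- and the step I expect to be the main obstacle --- is an East--North corner $(p,q)$ of the recursive path, because in the full shape its North-West region $\{i\le p,\ j\le q\}$ picks up the cells of $R_1$ lying in rows $\le p$, and I must show that adjoining these to the rectangle given by the inductive hypothesis produces a rectangle and not an L-shape. This is exactly where the $332/1$-avoiding hypothesis is used. If $R_1$ does not reach into rows $\le p$ there is nothing to adjoin; otherwise I would argue that the top of $R_1$ is forced to coincide with the top of the inductive rectangle, since any misalignment exhibits three consecutive rectangles satisfying conditions (a), (b), (c) of point (iii) of Proposition~\ref{prop:q6av}, i.e.\ a forbidden $332/1$ subshape. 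Carrying out this alignment argument --- bookkeeping the tops and bottoms of the rectangles swept over when $R_1$ is reinstated --- is the technical heart of the proof. I note that it does not follow formally from the North--East case by conjugation, since the spine path construction is not conjugation-invariant (every spine path begins with a North step), so the East--North corners genuinely require their own treatment.
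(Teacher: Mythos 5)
Your overall strategy is sound and lands in the same family as the paper's argument: an induction over the rectangular decomposition combined with a case analysis of Definition~\ref{definition:spine_path}, using the characterization of $332/1$-avoidance from Proposition~\ref{prop:q6av}(iii) at the crucial moment. The organization differs, though, in a way worth noting. The paper inducts by keeping the first $k-3$ rectangles (where the restriction of $L$ is again a spine path) and then checks the last three rectangles against cases (ii)--(iv); it proves only the North--East half of the statement and declares the East--North half analogous. You instead peel off the leading rectangle, matching the front-recursive structure of the spine path definition, and you treat the East--North corners explicitly. Your observation that the two halves are \emph{not} formally exchanged by conjugation (the spine path always begins with a North step) is correct and is a point the paper glosses over; your approach pays for this honesty by having to handle the ``reattachment'' problem, which the paper's North--East-only argument never faces because the South--East regions of inherited corners live entirely to the right of the deleted columns.

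The one place where your sketch stops short of a proof is exactly the step you flag as the technical heart, and it needs slightly more than you state. For an inherited East--North corner $(p,q)$, writing the inductive rectangle as rows $[t,p]$ times some column interval, the union with $R_1\cap\{i\le p\}$ is a rectangle if and only if either $R_1$ misses rows $\le p$ entirely, or \emph{both} the top of $R_1$ aligns with row $t$ \emph{and} $R_1$ extends down at least to row $p$. You argue only the top-alignment; the second condition does follow from the case hypotheses (e.g.\ in case (ii) the bottom of $R_1$ lies strictly below every row of the remaining shape), but it should be said. More substantively, deducing a forbidden configuration from a misalignment requires exhibiting three \emph{consecutive} rectangles satisfying (a),(b),(c) of Proposition~\ref{prop:q6av}(iii), whereas the misaligned pair may be $R_1$ and a rectangle far to its right; bridging this gap needs the weak monotonicity of the tops and bottoms of $R_1,\dotsc,R_k$ across the decomposition to locate a consecutive violating triple. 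Neither issue breaks the argument, but both must be written out for the proof to close.
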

\begin{remark}
    See Figure~\ref{fig:fatSnakeExample_Ferrers} for an example of this property. The idea behind this lemma is that no run of East steps of the spine path intersects the interior of more than one rectangle in the rectangular decomposition of the skew shape.
\end{remark}

\begin{proof}
We only prove the statement concerning the North-East segments 
of $L$ since the other case can be proved analogously. 
Let $R_{1}, \dotsc, R_{k}$ be the rectangles in the rectangular decomposition of $\lambda/\mu$. We note that each rectangle in the rectangular decomposition of a skew shape can contain at most one subshape induced by a North-East segment; this is because any run of East steps of the spine path intersects the interior of at most one rectangle in the skew shape. Let $D_{1}, \ldots , D_{k}$ be these (possibly empty) Ferrers shapes. If there is indeed a $\mathtt{NE}$ segment of $L$ in $R_{i}$, we will call $D_{i}$ non-empty. 

We proceed by induction on $k$. 
When $k=1$, $\lambda/\mu$ is itself a rectangle and $L$ is, by construction, the North-East boundary of $\lambda/\mu$. Hence the subshape formed by the only North-East segment of $L$ is also a rectangle.

If $k=2$ or $3$, the result follows by a casewise inspection of the relative heights of the rectangles $R_{1}, R_{2}, R_{3}$ and the construction of the spine path in Definition~\ref{definition:spine_path}. 

Suppose now $k > 3$ and the result holds for all skew shapes with at most $k-1$ rectangles in the rectangular decomposition. Let $L$ be the spine path of $\lambda/\mu$. By construction of $L$, the restriction of $L$ to the rectangles $R_{1}, \ldots , R_{k-3}$ is the spine path $L'$ of the skew shape with rectangular decomposition $R_{1}, \ldots , R_{k-3}$. Applying the induction hypothesis to this skew shape and $L'$, we see that each of $D_{1}, \ldots , D_{k-3}$ is a rectangle. 

It remains to consider $D_{i}$ for $i = k-2, k-1$ and $k$. If $D_{k-2}$ is non-empty, one of two cases can occur: (a) $L$ crosses the interior of $R_{k-2}$. This corresponds to case (ii) in the spine path construction and hence $D_{k-2}$ is clearly a rectangle; or else we have (b) $L$ does not cross the interior of $R_{k-2}$, in which instance either case (iii) or (iv) from the spine path construction holds. In each of these cases, the rectangle(s) succeeding $R_{k-2}$ have the same bottom as $R_{k-2}$ so once again $D_{k-2}$ is a rectangle. If $D_{k-1}$ is non-empty, only cases (ii) and (iv a) of the spine path construction can occur; in both cases it is immediate that $D_{k-1}$ is a rectangle. Finally by construction of the spine path, $L$ travels along the North-East boundary of $R_{k}$ and hence $D_{k}$ equals $R_{k}$ itself and is hence a rectangle. 
\end{proof}

For brevity, let \defin{$\psi_{\lambda/\mu}$}
be the path permutation determined by the 
spine path of $\lambda/\mu$ whenever $\lambda/\mu$
is a $332/1$-avoiding skew shape. This is called the \defin{spine bijection}
for short.
We will show that if $\lambda/\mu$ is a $332/1$-avoiding skew shape
then its spine bijection induces 
a bijection from the bases of $\pathMat_{\lambda/\mu}$ to
the bases of $\rookMat_{\lambda/\mu}$. 
By an abuse of notation, we call the induced map between set systems  $\psi_{\lambda/\mu}$.
Note that the spine bijection sends the basis in $\pathMat_{\lambda/\mu}$
corresponding to the spine path 
to the empty non-nesting rook placement in $\rookMat_{\lambda/\mu}$. 
Before proceeding with the proof, we first give an example.

\begin{example}
Consider the case $\lambda=321$.
The spine bijection $\psi_\lambda$ (in 2-line notation) and the corresponding spine path are shown below.
\[
\left[
 \begin{array}{cccccc}
  1 & 2 & 3 & 4 & 5 & 6 \\
  3 & 4 & 2 & 5 & 1 & 6
 \end{array}
 \right]
\qquad \qquad
\begin{tikzpicture}[baseline=(current bounding box.center)]
\begin{scope}
\clip (0,0)--(1,0)--(1,1)--(2,1)--(2,2)--(3,2)--(3,3)--(0,3)--(0,0)--cycle;
\draw[step=1em,gray] (0,0) grid (3,3);
\end{scope}
\draw[gray,line width=1.2]
(0,0)--(1,0)--(1,1)--(2,1)--(2,2)--(3,2)--(3,3)--(0,3)--(0,0);
\draw[red,opacity=0.9,line width=1.9]
(0,0)--(0,1)--(1,1)--(1,2)--(2,2)--(2,3)--(3,3);
\draw (0,1)node[entries]{$\scriptstyle{3}$};
\draw (0,2)node[entries]{$\scriptstyle{2}$};
\draw (0,3)node[entries]{$\scriptstyle{1}$};
\draw (1,4)node[entries]{$\scriptstyle{4}$};
\draw (2,4)node[entries]{$\scriptstyle{5}$};
\draw (3,4)node[entries]{$\scriptstyle{6}$};
\end{tikzpicture}
\]
We can see by inspection that $\psi_\lambda$ induces a bijection
from lattice paths in $\pathMat_{321}$ to
non-nesting rook placements in $\rookMat_{321}$.
We illustrate the induced bijection $\psi_{321}: \pathMat_{321} \to \rookMat_{321}$ below by drawing the
lattice paths together with the
corresponding non-nesting rook placements.
Each shape is labeled with the lattice path basis on the left
and its corresponding non-nesting rook placement basis on the right. 
\[
\includegraphics[page=1,width=0.1\textwidth]{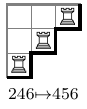}
\quad
\includegraphics[page=2,width=0.1\textwidth]{pics/catalan-spine-bij.pdf}
\quad
\includegraphics[page=3,width=0.1\textwidth]{pics/catalan-spine-bij.pdf}
\quad
\includegraphics[page=4,width=0.1\textwidth]{pics/catalan-spine-bij.pdf}
\quad
\includegraphics[page=5,width=0.1\textwidth]{pics/catalan-spine-bij.pdf}
\quad
\includegraphics[page=6,width=0.1\textwidth]{pics/catalan-spine-bij.pdf}
\quad
\includegraphics[page=7,width=0.1\textwidth]{pics/catalan-spine-bij.pdf}
\]
\[
\includegraphics[page=8,width=0.1\textwidth]{pics/catalan-spine-bij.pdf}
\quad
\includegraphics[page=9,width=0.1\textwidth]{pics/catalan-spine-bij.pdf}
\quad
\includegraphics[page=10,width=0.1\textwidth]{pics/catalan-spine-bij.pdf}
\quad
\includegraphics[page=11,width=0.1\textwidth]{pics/catalan-spine-bij.pdf}
\quad
\includegraphics[page=12,width=0.1\textwidth]{pics/catalan-spine-bij.pdf}
\quad
\includegraphics[page=13,width=0.1\textwidth]{pics/catalan-spine-bij.pdf}
\quad
\includegraphics[page=14,width=0.1\textwidth]{pics/catalan-spine-bij.pdf}
\]
\end{example}

 A crucial ingredient in generalizing the idea above is the presence of a spine path, which we know can only be defined for $332/1$-avoiding shapes. The following theorem shows that a rook matroid and lattice path matroid on the same skew Ferrers shape are isomorphic if and only if $332/1$ does not occur as a subshape of $\lambda /\mu$.

Before we prove this, we introduce a definition that allows one to move between lattice paths contained inside a skew shape.  

\begin{definition}\label{definition:flip_of_lattice_paths}
Two lattice paths $L$ and $L'$ are related by a \defin{flip} if $L$ can be obtained from $L'$ by either (a) flipping an East-North segment of $L'$  to a North-East segment, or (b) flipping a North-East segment to an East-North segment.   
\end{definition}

We are now ready to state the main theorem of this subsection. Since empty rows correspond to coloops and empty columns correspond to loops in both $\rookMat_{\lambda/\mu}$ and $\pathMat_{\lambda/\mu}$, it is enough to prove the theorem below for non-degenerate skew shapes; one extends it by matching these corresponding coloops and loops.

\emph{Note:} A matroidal proof of the first statement in Theorem~\ref{thm:rookPathBij} was later obtained in \cite{BoninDeMier2025EqualConfig} using properties of the configuration of a transversal matroid.

\begin{theorem}\label{thm:rookPathBij}
The lattice path matroid $\pathMat_{\lambda/\mu}$
is isomorphic to the rook matroid $\rookMat_{\lambda/\mu}$ if and only if $\lambda /\mu$ is a $332/1$-avoiding skew shape. In that case, an explicit isomorphism from $\pathMat_{\lambda/\mu}$ to $\rookMat_{\lambda/\mu}$ is given by $\psi_{\lambda/\mu}$.
\end{theorem}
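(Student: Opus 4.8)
The plan is to prove the two implications separately. I will assume throughout that $\lambda/\mu$ is connected, so that the rectangular decomposition, the spine path, and the permutation $\psi_{\lambda/\mu}$ are defined; the general case reduces to this one by decomposing both $\pathMat_{\lambda/\mu}$ and $\rookMat_{\lambda/\mu}$ as direct sums over the connected components of $\lambda/\mu$ (Lemma~\ref{lem:direct_sum} for the rook side, and the analogous decomposition for lattice path matroids), observing that $\lambda/\mu$ is $332/1$-avoiding exactly when each component is, and that $\psi_{\lambda/\mu}$ is then the direct sum of the component spine bijections.

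For the ``only if'' direction I argue contrapositively. Suppose $\lambda/\mu$ contains $332/1$ as a subshape. Since a subshape is obtained by deleting rows and columns, Lemma~\ref{lem:easyRookMinors} realizes $\rookMat_{332/1}$ as a minor of $\rookMat_{\lambda/\mu}$: delete (in the matroid) each row that is not retained and contract each column that is not retained. By Lemma~\ref{lem:3321_Q6_isomorphism}, $\rookMat_{332/1}\cong Q_6$, so $Q_6$ is a minor of $\rookMat_{\lambda/\mu}$. As $Q_6$ is an excluded minor for the minor-closed class of lattice path matroids (as recalled above), $\rookMat_{\lambda/\mu}$ is not itself a lattice path matroid, and in particular it cannot be isomorphic to $\pathMat_{\lambda/\mu}$.

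For the ``if'' direction, assume $\lambda/\mu$ is $332/1$-avoiding and regard $\psi := \psi_{\lambda/\mu}$ as a bijection of the common ground set $[r+c] = E_{\lambda/\mu}$. It suffices to show that $\psi$ carries every basis of $\pathMat_{\lambda/\mu}$ to a basis of $\rookMat_{\lambda/\mu}$: since the two matroids have equally many bases by Proposition~\ref{lem:setBijectionPathRook} and $\psi$ is injective on subsets, this forces a bijection of bases and hence a matroid isomorphism. Recall that the bases of $\pathMat_{\lambda/\mu}$ are exactly the East-step sets of the lattice paths in $\lambda/\mu$, and that a flip replaces one such set by another differing in a single index: an $\mathtt{NE}\to\mathtt{EN}$ flip at step-positions $i,i+1$ deletes $i+1$ and inserts $i$. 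Applying the fixed permutation $\psi$, this becomes the single-element exchange deleting $\psi(i+1)$ and inserting $\psi(i)$. Because the lattice paths in $\lambda/\mu$ are connected under flips, I induct on the number of flips separating a path from the spine path; the base case is the spine path itself, whose East-step set $\psi$ sends to $\{r+1,\dotsc,r+c\}$, the basis of $\rookMat_{\lambda/\mu}$ recording the empty rook placement.

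The inductive step is the heart of the matter and the main obstacle. One must show that the single-element exchange induced by a legal flip carries a basis of $\rookMat_{\lambda/\mu}$ to another basis---equivalently, by Theorem~\ref{thm:rook_is_transversal}, a transversal of $\mathcal{A}_{\lambda/\mu}$ to a transversal. The decisive structural input is Lemma~\ref{lem:spinePathRectangles}: at each corner of the spine path the adjoining subshape is a rectangle, a rectangularity that holds precisely because $\lambda/\mu$ is $332/1$-avoiding (Proposition~\ref{prop:q6av}). Comparing the flipped path to the spine path near the flipped cell, this lets me identify $\psi(i)$ and $\psi(i+1)$ as a compatible row--column pair, so that the exchange amounts to inserting, deleting, or sliding a single rook within one rectangle of the rectangular decomposition; such a move plainly preserves both the non-nesting condition and containment in $\lambda/\mu$. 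I expect the verification to divide along the four cases of Definition~\ref{definition:spine_path}, with the rectangle lemma supplying the essential fact in each. Once this local claim is established the induction closes, showing that $\psi_{\lambda/\mu}$ maps bases to bases and is therefore the asserted isomorphism.
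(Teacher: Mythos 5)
Your overall architecture matches the paper's proof exactly: the ``only if'' direction via Lemma~\ref{lem:easyRookMinors}, Lemma~\ref{lem:3321_Q6_isomorphism}, and the excluded-minor status of $Q_6$ is complete and correct, and the ``if'' direction is set up the same way --- induct on flip-distance from the spine path, with base case the empty rook placement, and close the isomorphism by combining injectivity of $\psi_{\lambda/\mu}$ with the equal cardinality from Proposition~\ref{lem:setBijectionPathRook}. Your reduction to connected shapes and your identification of Lemma~\ref{lem:spinePathRectangles} as the decisive structural input are both right.

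However, the inductive step --- which you correctly call the heart of the matter --- is not actually carried out, and the one concrete claim you make about it is false. You assert that the single-element exchange induced by a flip ``amounts to inserting, deleting, or sliding a single rook within one rectangle,'' which ``plainly'' preserves the non-nesting condition. This is true only when $\psi(i)$ and $\psi(i+1)$ lie in the same block (both row indices or both column indices), where the rook genuinely just slides to an adjacent row or column. In the mixed cases --- where the flip changes the occupancy status of one row \emph{and} one column simultaneously, so the number of rooks goes up or down by one --- the new rook cannot in general be placed at the obvious cell without colliding with or nesting against rooks already sitting in the rectangle North-West of that cell. The paper's proof handles this by relocating \emph{all} the existing rooks in that rectangle onto a ``staircase'' of new positions chosen so that every previously occupied row and column stays occupied while the new row--column pair is absorbed; verifying that this staircase stays inside $\lambda/\mu$ and remains non-nesting is exactly where Lemma~\ref{lem:spinePathRectangles} does its work. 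Without this multi-rook rearrangement (and its inverse for the reverse flip), the induction does not close, so as written the proposal has a genuine gap precisely at the step you flagged as the main obstacle.
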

\begin{proof}
For the only if direction: suppose $\lambda /\mu$ contains $332/1$ as a subshape. Then $\rookMat_{\lambda/\mu}$ would contain $Q_{6}$ as a minor by Lemma~\ref{lem:3321_Q6_isomorphism} and Lemma~\ref{lem:easyRookMinors}. 
But $Q_{6}$ is an excluded minor (see \cite[Thm. 3.1]{Bonin2010ExcludedMinors}) 
for the class of lattice path matroids so $\rookMat_{\lambda /\mu}$ 
cannot be isomorphic to $\pathMat_{\lambda /\mu}$. 

For the if direction: let $\lambda /\mu$ be a $332/1$-avoiding skew shape on $r$ rows 
and $c$ columns. Since $\lambda/\mu$ avoids $332/1$, we can consider 
its spine path $L_{0}$ and the associated spine bijection $\psi_{\lambda/\mu}$, 
which for the purpose of brevity we 
occasionally refer to as $\psi$. 
Recall that we are identifying a lattice path by its set of North steps. We will be done by the following claim: 

\textbf{Claim:} If $L$ is a lattice path contained inside $\lambda /\mu$, then $\psi_{\lambda/\mu}(L)$ 
defines a non-nesting rook placement on $\lambda /\mu$. 

\textbf{Proof of claim:} We proceed by induction on $k$, the number of cells between $L$ and the spine path. For the $k = 0$ case, $L$ is the spine path itself, the North steps of which, by construction, correspond to the row indices of $\lambda /\mu$ which implies that $\psi_{\lambda /\mu}(L_{0})$ is the empty rook placement.

In the two-line notation of the spine path permutation, we partition the columns into \defin{blocks},
formed by runs of steps in the same direction (North or East).

Now suppose $k \geq 1$ and consider any lattice path $L^{+}$ contained inside $\lambda/ \mu$ with $k$ cells between it and $L_{0}$. Then $L^{+}$ can be obtained from some lattice path $L$ in $\lambda /\mu$ with $k-1$ cells between $L$ and $L_{0}$ via a flip between steps $i$ and $i+1$. Here $i$ and $i+1$ are consecutive steps of the lattice path $L$ such that exactly one of them is a North step. This means that $L^{+} = (L \setminus \{i\} ) \cup \{i+1\}$ or $L^{+} = (L \setminus \{i + 1\}) \cup \{i\}$. 

In the two-line notation of $\psi_{\lambda/\mu}$, circle the North steps of $L$, as in Example~\ref{eg:spine_bijection_eg}. By the induction hypothesis, the values below the circled entries, $\psi(L)$, define a valid non-nesting rook placement $\rho_{L}$ on $\lambda/\mu$. The last sentence of the paragraph above implies that the set $\psi(L^{+})$ is obtained from $\psi(L)$ by swapping out one of $\psi(i)$ or $\psi(i+1)$ for the other. At the level of the circled entries of the first line of $\psi_{\lambda /\mu}$, the flipping of a $\mathtt{NE}$ segment to an $\mathtt{EN}$ segment or vice versa corresponds to \emph{moving a circle} (the index of a North step) either one position left or right.

We will show that the swapping of $\psi(i)$ and $\psi(i+1)$ corresponds to a movement of some rooks of $\rho_{L}$ that still defines a valid non-nesting rook placement on $\lambda /\mu$. Recall that we have partitioned the columns of the two-line array of the spine bijection $\psi_{\lambda/\mu}$ into blocks. There are four cases to consider:
\begin{itemize}
 \item[A.]  \emph{$\psi(i)$ and $\psi(i+1)$ are both column indices from the same block.}
 \item[B.] \emph{$\psi(i)$ and $\psi(i+1)$ are both row indices from the same block.}
\item[C.] \emph{$\psi(i)$ and $\psi(i+1)$ are in different blocks,
and the North step (exactly one of $i$ or $i+1$) is mapped to a column index.}
\item[D.] \emph{$\psi(i)$ and $\psi(i+1)$ are in different blocks,
and the North step of $L$ is mapped to a row index.}
\end{itemize}

By Lemma~\ref{lem:spinePathRectangles},  any such flip occurs in a rectangular 
region determined by the spine path.
Note that such a rectangular region 
is given by row and column indices in two adjacent blocks
in the two-line notation of $\psi_{\lambda/\mu}$.

\medskip We will see that in every case this corresponds to moving the rooks of $\rho_{L}$ such that they still stay on $\lambda/\mu$, 
since all operations are performed on some \emph{rectangle}, as guaranteed by Lemma~\ref{lem:spinePathRectangles}. We consider each of the cases above.

\begin{itemize}
 \item[A.] In this case we have two adjacent columns $\psi(i)$ and $\psi(i+1)$ where one is empty
 the other is occupied by a rook of $\rho_{L}$. 
 We can then move the rook horizontally to the adjacent 
 column and obtain a new valid rook placement on $\lambda/\mu$. 

\item[B.] In this case we have two adjacent rows $\psi(i)$ and $\psi(i+1)$ where one is empty
 the other is occupied by a rook. The rook is moved vertically to the adjacent row to obtain a new valid rook placement on $\lambda/\mu$.

\item[C.] In this case, either $p = \psi(i)$ and $q = \psi(i+1)$ are respectively an occupied row and an occupied column, or vice versa.

In the first case, we need to make both row $p$ and column $q$ unoccupied without affecting the occupancy of the other rows or columns, 
or the non-nesting nature of the other rooks of $\rho_{L}$. (In the two-line array, this corresponds to moving a circle one step to the left.). 

There are two configurations to consider. 
Either the cell $(p, q)$ is occupied by a rook or not. 
In the former configuration, simply delete the rook in $(p, q)$. This is shown in the figure on the left below.
\begin{center}
\includegraphics[page=6,scale=0.6]{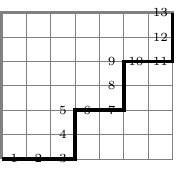}
\raisebox{5.0\height}{$\;\to$}
\includegraphics[page=5,scale=0.6]{pics/mainBij.pdf}
\hspace{2cm}
\includegraphics[page=8,scale=0.6]{pics/mainBij.pdf}
\raisebox{5.0\height}{$\;\to$}
\includegraphics[page=7,scale=0.6]{pics/mainBij.pdf}
\end{center}
In the second configuration, we form a ``staircase'' of cells (marked $\circ$)
based on the location of the existing rooks in the rectangle North-West to $(p, q)$,
as seen in the figure above on the right.
We then move the rooks of $\rho_{L}$ to the locations marked $\circ$, once again obtaining a new valid non-nesting rook placement on $\lambda/\mu$, without affecting the occupancy of any other row or column of $\rho_{L}$.

In the second case, $p$ is a column index and $q$ is a row index. The same operations as above are performed with the figures rotated by $180^\circ$. (In the two-line array, this corresponds to moving a circle one step to the right.)

\item[D.] As in C, there are two cases. In the first case, 
column $q = \psi(i)$ and row $p = \psi(i+1)$ are both unoccupied and need to be made occupied; the second case, where $q$ is a row and $p$ is a column, is treated analogously. 
Again, two configurations may arise: one where the rectangle strictly South-West of $(p, q)$ contains a rook, 
and one where it does not. In the former, simply place a rook at $(p, q)$. In the latter, perform the simultaneous shifting of rooks as in Case C. This is illustrated below.

\begin{center}
\includegraphics[page=5,scale=0.6,angle=180,origin=c]{pics/mainBij.pdf}
\raisebox{5.0\height}{$\;\to$}
\includegraphics[page=6,scale=0.6,angle=180,origin=c]{pics/mainBij.pdf}
\hspace{2cm}
\includegraphics[page=7,scale=0.6,angle=180,origin=c]{pics/mainBij.pdf}
\raisebox{5.0\height}{$\;\to$}
\includegraphics[page=8,scale=0.6,angle=180,origin=c]{pics/mainBij.pdf}
\end{center}
\end{itemize}

Once again, the second configuration in this case corresponds to the same picture rotated $180^{\circ}$. The consideration of the four cases is thus complete.

This shows that $\psi_{\lambda/\mu}(L^{+})$ also defines a non-nesting rook placement on $\lambda/\mu$, which proves the claim. Thus $\psi_{\lambda/\mu}$ maps bases of $\pathMat_{\lambda/\mu}$ to bases of $\rookMat_{\lambda/\mu}$. By Proposition~\ref{lem:setBijectionPathRook}, $\mathcal{B}(\pathMat_{\lambda/\mu})$ and $\mathcal{B}(\rookMat_{\lambda/\mu})$ have equal cardinalities, and hence $\psi_{\lambda/\mu}$ is a matroid isomorphism, completing the proof.
\end{proof}

\begin{example}\label{eg:spine_bijection_eg}
We illustrate the steps of the proof of the if direction of the previous theorem on the $332/1$-avoiding skew shape $7775533/5$. The two-line notation of the spine bijection $\psi_{7775533/5}$ with indicated blocks is:
\[
\psi_{7775533/5} =
\left[
 \begin{array}{cc|ccc|cc|cc|ccc|cc}
  \scriptstyle{1} &  \scriptstyle{2} & \scriptstyle{3} & \scriptstyle{4} & \scriptstyle{5} & \scriptstyle{6} & \scriptstyle{7} & \scriptstyle{8} & \scriptstyle{9} & \scriptstyle{10} & \scriptstyle{11} & \scriptstyle{12}& \scriptstyle{13} & \scriptstyle{14} \\
  \scriptstyle{7} & \scriptstyle{6} & \scriptstyle{8} & \scriptstyle{9} & \scriptstyle{10} & \scriptstyle{5} & \scriptstyle{4} & \scriptstyle{11} & \scriptstyle{12} & \scriptstyle{3} & \scriptstyle{2} &
  \scriptstyle{1} & \scriptstyle{13} & \scriptstyle{14} 
 \end{array}\right].
\]

See Figure~\ref{fig:isomorphism_thm_eg} below for an illustration of the spine path in red, a lattice path $L$ in blue, and the four steps at which flips can be performed to produce $L^{+}$. These four cases appear in the blue lattice path $L$ shown in Figure~\ref{fig:isomorphism_thm_eg}.
We have a Case A flip for step $i=4$, Case B for step $i=1$, 
Case C for step $i=9$ and Case D for step $i=7$. 

\begin{figure}[!ht]
    \centering
\begin{tikzpicture}[baseline=(current bounding box.center)]
\begin{scope}
\clip (0,0)--(3,0)--(3,2)--(5,2)--(5,4)--(7,4)--(7,7)--(5,7)--(5,6)--(0,6)--cycle;
\draw[step=1em,gray](0,0) grid (8,8);
\end{scope}
 \draw[black,line width=1.4] (0,0)--(3,0)--(3,2)--(5,2)--(5,4)--(7,4)--(7,7)--(5,7)--(5,6)--(0,6)--cycle;

\draw[red,opacity=0.6,line width=1.8pt] (0,0)--(0,2)--(3,2)--(3,4)--(5,4)--(5,7)--(7,7);
\fill[red,opacity=0.6] (0,0) circle (3pt);
\fill[red,opacity=0.6] (7,7) circle (3pt);

\draw[blue,line width=1.2pt] (0,0)--(0,1)--(1,1)--(1,2)--(2,2)--(2,3)--(2,4)--(3,4)--(3,5)--(4,5)--(4,6)--(7,6)--(7,7);

\draw (0.5,1.5) node  {$\rook$};
\draw (6.5,6.5) node  {$\rook$};
\draw (3.5,5.5) node  {$\rook$};
\draw (2.5,3.5) node  {$\rook$};
\fill[gray,opacity=0.3] (1, 2) rectangle (2, 3);
\fill[gray,opacity=0.3] (0, 0) rectangle (1, 1);
\fill[gray,opacity=0.3] (2, 4) rectangle (3, 5);
\fill[gray,opacity=0.3] (3, 5) rectangle (4, 6);

\draw (-0.5,0.5) node  {$\scriptstyle{7}$};
\draw (-0.5,1.5) node  {$\scriptstyle{6}$};
\draw (-0.5,2.5) node  {$\scriptstyle{5}$};
\draw (-0.5,3.5) node  {$\scriptstyle{4}$};
\draw (-0.5,4.5) node  {$\scriptstyle{3}$};
\draw (-0.5,5.5) node  {$\scriptstyle{2}$};
\draw (-0.5,6.5) node  {$\scriptstyle{1}$};
\draw ( 0.5,7.5) node  {$\scriptstyle{8}$};
\draw ( 1.5,7.5) node  {$\scriptstyle{9}$};
\draw ( 2.5,7.5) node  {$\scriptstyle{10}$};
\draw ( 3.5,7.5) node  {$\scriptstyle{11}$};
\draw ( 4.5,7.5) node  {$\scriptstyle{12}$};
\draw ( 5.5,7.5) node  {$\scriptstyle{13}$};
\draw ( 6.5,7.5) node  {$\scriptstyle{14}$};
\end{tikzpicture}
 \caption{The skew shape $\lambda/\mu = 7775533/5$ together with its spine path $L_{0}$ in red, a representative lattice path $L = \{1,3,5,6,8,10,14\}$ in blue and the corresponding non-nesting rook placement $\psi_{\lambda /\mu}(L) = \{3,5,7,8,10,11,14\}$. From bottom to top, the four shaded cells correspond to $\mathtt{NE}$ or $\mathtt{EN}$ segments the flips of which are of type B, A, C, and D respectively.}
    \label{fig:isomorphism_thm_eg}
\end{figure}
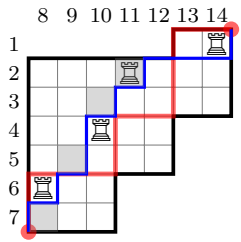

The circled entries of $L$ in $\psi_{\lambda/\mu}$ are shown below:

\[
\psi_{7775533/5} =
\left[
 \begin{array}{cc|ccc|cc|cc|ccc|cc}
  \circled{$\scriptstyle{1}$} &  \scriptstyle{2} & \circled{$\scriptstyle{3}$} & \scriptstyle{4} & \circled{$\scriptstyle{5}$} & \circled{$\scriptstyle{6}$} & \scriptstyle{7} & \circled{$\scriptstyle{8}$} & \scriptstyle{9} & \circled{$\scriptstyle{10}$} & \scriptstyle{11} & \scriptstyle{12} & \scriptstyle{13} & \circled{$\scriptstyle{14}$} \\
  \scriptstyle{7} & \scriptstyle{6} & \scriptstyle{8} & \scriptstyle{9} & \scriptstyle{10} & \scriptstyle{5} & \scriptstyle{4} & \scriptstyle{11} & \scriptstyle{12} & \scriptstyle{3} & \scriptstyle{2} &
  \scriptstyle{1} & \scriptstyle{13} & \scriptstyle{14} 
 \end{array}\right].
\]

We describe the results of performing the four types of flips in Figure~\ref{fig:isomorphism_thm_eg} on $L = \{1,3,5,6,8,10,14\}$.

\begin{itemize}
 \item The flip of type A at step $4$ gives the new path
 $L^+=\{1,3,4,6,8,10,14\}$.
 \item The flip of type B at step $1$ gives the new path
 $L^+=\{2,3,5,6,8,10,14\}$.
  \item The flip of type C at step $9$ gives the new path
 $L^+=\{1,3,5,6,8,9,14\}$.
  \item The flip of type D at step $7$ gives the new path
 $L^+=\{1,3,5,6,7,10,14\}$.
\end{itemize}

\begin{figure}[!ht]
\centering
\begin{tikzpicture}[baseline=(current bounding box.center)]
\begin{scope}
\clip (0,0)--(3,0)--(3,2)--(5,2)--(5,4)--(7,4)--(7,7)--(5,7)--(5,6)--(0,6)--cycle;
\draw[step=1em,gray](0,0) grid (8,8);
\end{scope}
 \draw[black,line width=1.4] (0,0)--(3,0)--(3,2)--(5,2)--(5,4)--(7,4)--(7,7)--(5,7)--(5,6)--(0,6)--cycle;

\draw[red,opacity=0.6,line width=1.8pt] (0,0)--(0,2)--(3,2)--(3,4)--(5,4)--(5,7)--(7,7);
\fill[red,opacity=0.6] (0,0) circle (3pt);
\fill[red,opacity=0.6] (7,7) circle (3pt);
\fill[gray,opacity=0.3] (1, 2) rectangle (2, 3);

\draw[blue,line width=1.2pt] (0,0)--(0,1)--(1,1)--(1,2)--(1,3)--(2,3)--(2,4)--(3,4)--(3,5)--(4,5)--(4,6)--(7,6)--(7,7);

\draw (0.5,1.5) node  {$\rook$};
\draw (6.5,6.5) node  {$\rook$};
\draw (3.5,5.5) node  {$\rook$};
\draw (1.5,3.5) node  {$\rook$};

\draw (-0.5,0.5) node  {$\scriptstyle{7}$};
\draw (-0.5,1.5) node  {$\scriptstyle{6}$};
\draw (-0.5,2.5) node  {$\scriptstyle{5}$};
\draw (-0.5,3.5) node  {$\scriptstyle{4}$};
\draw (-0.5,4.5) node  {$\scriptstyle{3}$};
\draw (-0.5,5.5) node  {$\scriptstyle{2}$};
\draw (-0.5,6.5) node  {$\scriptstyle{1}$};
\draw ( 0.5,7.5) node  {$\scriptstyle{8}$};
\draw ( 1.5,7.5) node  {$\scriptstyle{9}$};
\draw ( 2.5,7.5) node  {$\scriptstyle{10}$};
\draw ( 3.5,7.5) node  {$\scriptstyle{11}$};
\draw ( 4.5,7.5) node  {$\scriptstyle{12}$};
\draw ( 5.5,7.5) node  {$\scriptstyle{13}$};
\draw ( 6.5,7.5) node  {$\scriptstyle{14}$};
\end{tikzpicture}
\hspace{0.5cm}
\begin{tikzpicture}[baseline=(current bounding box.center)]
\begin{scope}
\clip (0,0)--(3,0)--(3,2)--(5,2)--(5,4)--(7,4)--(7,7)--(5,7)--(5,6)--(0,6)--cycle;
\draw[step=1em,gray](0,0) grid (8,8);
\end{scope}
 \draw[black,line width=1.4] (0,0)--(3,0)--(3,2)--(5,2)--(5,4)--(7,4)--(7,7)--(5,7)--(5,6)--(0,6)--cycle;

\draw[red,opacity=0.6,line width=1.8pt] (0,0)--(0,2)--(3,2)--(3,4)--(5,4)--(5,7)--(7,7);
\fill[red,opacity=0.6] (0,0) circle (3pt);
\fill[red,opacity=0.6] (7,7) circle (3pt);

\draw[blue,line width=1.2pt] (0,0)--(1,0)--(1,1)--(1,2)--(2,2)--(2,3)--(2,4)--(3,4)--(3,5)--(4,5)--(4,6)--(7,6)--(7,7);

\fill[gray,opacity=0.3] (0, 0) rectangle (1, 1);

\draw (0.5,0.5) node  {$\rook$};
\draw (6.5,6.5) node  {$\rook$};
\draw (3.5,5.5) node  {$\rook$};
\draw (2.5,3.5) node  {$\rook$};

\draw (-0.5,0.5) node  {$\scriptstyle{7}$};
\draw (-0.5,1.5) node  {$\scriptstyle{6}$};
\draw (-0.5,2.5) node  {$\scriptstyle{5}$};
\draw (-0.5,3.5) node  {$\scriptstyle{4}$};
\draw (-0.5,4.5) node  {$\scriptstyle{3}$};
\draw (-0.5,5.5) node  {$\scriptstyle{2}$};
\draw (-0.5,6.5) node  {$\scriptstyle{1}$};
\draw ( 0.5,7.5) node  {$\scriptstyle{8}$};
\draw ( 1.5,7.5) node  {$\scriptstyle{9}$};
\draw ( 2.5,7.5) node  {$\scriptstyle{10}$};
\draw ( 3.5,7.5) node  {$\scriptstyle{11}$};
\draw ( 4.5,7.5) node  {$\scriptstyle{12}$};
\draw ( 5.5,7.5) node  {$\scriptstyle{13}$};
\draw ( 6.5,7.5) node  {$\scriptstyle{14}$};
\end{tikzpicture}
\hspace{0.5cm}
\begin{tikzpicture}[baseline=(current bounding box.center)]
\begin{scope}
\clip (0,0)--(3,0)--(3,2)--(5,2)--(5,4)--(7,4)--(7,7)--(5,7)--(5,6)--(0,6)--cycle;
\draw[step=1em,gray](0,0) grid (8,8);
\end{scope}
 \draw[black,line width=1.4] (0,0)--(3,0)--(3,2)--(5,2)--(5,4)--(7,4)--(7,7)--(5,7)--(5,6)--(0,6)--cycle;

\draw[red,opacity=0.6,line width=1.8pt] (0,0)--(0,2)--(3,2)--(3,4)--(5,4)--(5,7)--(7,7);
\fill[red,opacity=0.6] (0,0) circle (3pt);
\fill[red,opacity=0.6] (7,7) circle (3pt);

\draw[blue,line width=1.2pt] (0,0)--(0,1)--(1,1)--(1,2)--(2,2)--(2,3)--(2,4)--(3,4)--(3,5)--(3,6)--(4,6)--(7,6)--(7,7);

\draw (0.5,1.5) node  {$\rook$};
\draw (6.5,6.5) node  {$\rook$};
\draw (3.5,4.5) node  {$\rook$};
\draw (4.5,5.5) node  {$\rook$};
\draw (2.5,3.5) node  {$\rook$};

\fill[gray,opacity=0.3] (3, 5) rectangle (4, 6);

\draw (-0.5,0.5) node  {$\scriptstyle{7}$};
\draw (-0.5,1.5) node  {$\scriptstyle{6}$};
\draw (-0.5,2.5) node  {$\scriptstyle{5}$};
\draw (-0.5,3.5) node  {$\scriptstyle{4}$};
\draw (-0.5,4.5) node  {$\scriptstyle{3}$};
\draw (-0.5,5.5) node  {$\scriptstyle{2}$};
\draw (-0.5,6.5) node  {$\scriptstyle{1}$};
\draw ( 0.5,7.5) node  {$\scriptstyle{8}$};
\draw ( 1.5,7.5) node  {$\scriptstyle{9}$};
\draw ( 2.5,7.5) node  {$\scriptstyle{10}$};
\draw ( 3.5,7.5) node  {$\scriptstyle{11}$};
\draw ( 4.5,7.5) node  {$\scriptstyle{12}$};
\draw ( 5.5,7.5) node  {$\scriptstyle{13}$};
\draw ( 6.5,7.5) node  {$\scriptstyle{14}$};
\end{tikzpicture}
\hspace{0.5cm}
\begin{tikzpicture}[baseline=(current bounding box.center)]
\begin{scope}
\clip (0,0)--(3,0)--(3,2)--(5,2)--(5,4)--(7,4)--(7,7)--(5,7)--(5,6)--(0,6)--cycle;
\draw[step=1em,gray](0,0) grid (8,8);
\end{scope}
 \draw[black,line width=1.4] (0,0)--(3,0)--(3,2)--(5,2)--(5,4)--(7,4)--(7,7)--(5,7)--(5,6)--(0,6)--cycle;

\draw[red,opacity=0.6,line width=1.8pt] (0,0)--(0,2)--(3,2)--(3,4)--(5,4)--(5,7)--(7,7);
\fill[red,opacity=0.6] (0,0) circle (3pt);
\fill[red,opacity=0.6] (7,7) circle (3pt);

\draw[blue,line width=1.2pt] (0,0)--(0,1)--(1,1)--(1,2)--(2,2)--(2,3)--(2,4)--(2,5)--(3,5)--(4,5)--(4,6)--(7,6)--(7,7);

\fill[gray,opacity=0.3] (2, 4) rectangle (3, 5);

\draw (0.5,1.5) node  {$\rook$};
\draw (6.5,6.5) node  {$\rook$};
\draw (2.5,5.5) node  {$\rook$};

\draw (-0.5,0.5) node  {$\scriptstyle{7}$};
\draw (-0.5,1.5) node  {$\scriptstyle{6}$};
\draw (-0.5,2.5) node  {$\scriptstyle{5}$};
\draw (-0.5,3.5) node  {$\scriptstyle{4}$};
\draw (-0.5,4.5) node  {$\scriptstyle{3}$};
\draw (-0.5,5.5) node  {$\scriptstyle{2}$};
\draw (-0.5,6.5) node  {$\scriptstyle{1}$};
\draw ( 0.5,7.5) node  {$\scriptstyle{8}$};
\draw ( 1.5,7.5) node  {$\scriptstyle{9}$};
\draw ( 2.5,7.5) node  {$\scriptstyle{10}$};
\draw ( 3.5,7.5) node  {$\scriptstyle{11}$};
\draw ( 4.5,7.5) node  {$\scriptstyle{12}$};
\draw ( 5.5,7.5) node  {$\scriptstyle{13}$};
\draw ( 6.5,7.5) node  {$\scriptstyle{14}$};
\end{tikzpicture}
 \caption{From left to right: the four new rook placements induced by the lattice path $L^{+}$ (shown in blue) after the four types of flips (A, B, D, C). The shaded cell in each skew shape shows where the flip occurred.}
    \label{fig:isomorphism_thm_examples}
\end{figure}
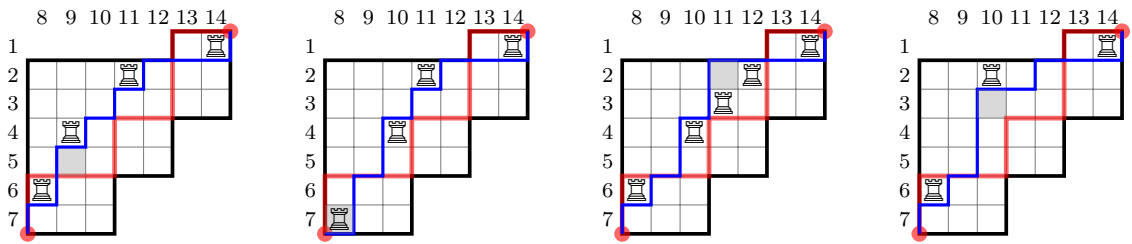

\end{example}

In a different direction, we can obtain lattice 
path matroids as deletions of large enough rook matroids.

\begin{proposition}\label{prop:lpm_as_rook_matroid_minor}
Let $\lambda/\mu$ be a skew shape with $r$ rows and $n-r$ columns. Define partitions $\lambda^{*}, \mu^{*}$ by $\lambda_{i}^{*} = \lambda_{i}+r-i+1$ and $\mu_{i}^{*} = \mu_{i} + r- i$ for $i=1, \ldots , r$. Then $\pathMat_{\lambda/\mu} \cong \rookMat_{\lambda^\ast/\mu^\ast} \setminus [r]$.
\end{proposition}
\begin{proof}
Recall that the sets in the transversal presentation of $\rookMat_{\lambda^{*}/\mu^{*}}$ are \[A_{i} = \{i\} \cup \{r+j: \mu_{i}+r-i < j \leq \lambda_{i}+r-i+1\} \quad  \text{for $i=1, \ldots , r$}.\] By deleting the elements of $[r]$ from $\rookMat_{\lambda^{*}/\mu^{*}}$, we are left with a transversal matroid on $[r+1, r+n]$ whose $i^{\thsup}$ set is the interval equal to the second set in the equation above. By applying the shift $j \mapsto j - r$, and reindexing with $s = r-i+1$, this interval becomes $[\mu_{r-s+1}+s, \lambda_{r-s+1}+s]$ which is the $s^{\thsup}$ set in the transversal presentation of the lattice path matroid $\pathMat_{\lambda/\mu}$. An example is shown in Figure~\ref{fig:LPM_as_rook_minors}. 
\end{proof}

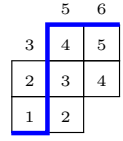
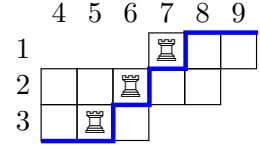
\begin{figure}[!ht]
\centering
\begin{subfigure}[b]{0.40\textwidth}
\centering
   \begin{tikzpicture}[inner sep=0in,outer sep=0in]
\node (n) {\begin{varwidth}{6cm}{
\ytableausetup{textmode, boxsize=1.2em}
\begin{ytableau} 
 \none & \none[ \tiny 5] & \none[ \tiny 6]\\
\none[\tiny 3] & \tiny 4 & \tiny 5  \\ \tiny 2 & \tiny 3 & \tiny 4  \\  \tiny 1 & \tiny 2  \\ \end{ytableau}}\end{varwidth}};
\coordinate (a2) at ($(n.south west)$);
\coordinate (a3) at ($(a2)+(1.25em, 0)$);
\coordinate (a4) at ($(a3)+(0, 1.25em)$);
\coordinate (a5) at ($(a4)+(0, 1.25em)$);
\coordinate (a6) at ($(a5)+(0, 1.25em)$);
\coordinate (a7) at ($(a6)+(1.25em, 0)$);
\coordinate (a8) at ($(a7)+(1.25em, 0)$);
\draw[line width = 0.6mm,blue](a2)--(a3)-- (a4)-- (a5)-- (a6)-- (a7)-- (a8);
\end{tikzpicture}
\caption{Skew shape $\lambda /\mu = 332/1$ with the path $L = 156$ marked in blue.}
\label{fig:LPM_as_rook_minor2} 
\end{subfigure}
\hspace{1 cm}
~
\begin{subfigure}[b]{0.40\textwidth}
\centering
\begin{tikzpicture}[inner sep=0in,outer sep=0in]
\node (n) {\begin{varwidth}{5cm}{
\ytableausetup
{boxsize=1.2em}
  \begin{ytableau} 
  \none & \none[4] & \none[5] & \none[6] & \none[7] & \none[8] & \none[9]  \\ 
  \none[1] & \none & \none & \none & \rook &  &  \\ 
  \none[2] &  &  & \rook &  &  \\ \none[3] &  & \rook  &  \\
\end{ytableau}}\end{varwidth}};
\coordinate (a1) at ([xshift=1.25em]n.south west);
%\node(A1)[] at (a1) {$\bullet$};
\coordinate (a2) at ($([xshift=1.25em]n.south west)+(1.25em, 0)$);
\coordinate (a3) at ($(a2)+(1.25em, 0)$);
\coordinate (a4) at ($(a3)+(0, 1.25em)$);
\coordinate (a5) at ($(a4)+(1.25em, 0)$);
\coordinate (a6) at ($(a5)+(0, 1.25em)$);
\coordinate (a7) at ($(a6)+(1.25em, 0)$);
\coordinate (a8) at ($(a7)+(0, 1.25em)$);
\coordinate (a9) at ($(a8)+(1.25em, 0)$);
\coordinate (a10) at ($(a9)+(1.25em, 0)$);
\draw[line width = 0.6mm,blue] (a1)-- (a2)--(a3)-- (a4)-- (a5)-- (a6)-- (a7)-- (a8)-- (a9)-- (a10);
\end{tikzpicture}
\caption{Skew shape $\lambda^{*} /\mu^{*} = 653/3$ with the path $L'$ marked in blue.}
\label{fig:LPM_as_rook_minor1}
\end{subfigure}
\caption{The map $i \mapsto i+r$ in Proposition~\ref{prop:lpm_as_rook_matroid_minor} adds an $\mathtt{E}$ step before every $\mathtt{N}$ step in $L$ to yield a lattice path $L'$ with a rook in each of its outer corners.}
\label{fig:LPM_as_rook_minors}
\end{figure}

 Rook matroids are shown to be positroids in Section~\ref{subsection:positroids}, and positroids are known to be closed under taking minors~\cite[Proposition 3.5]{ArdilaRinconWilliams2016} and under a cyclic shift of the ground set~\cite[Lemma 3.3]{ArdilaRinconWilliams2016}. Since the map in the proof of Proposition~\ref{prop:lpm_as_rook_matroid_minor} is a cyclic shift, we can thus deduce that every lattice path matroid is a positroid, a fact first proven in \cite{Oh2011Positroids}.

\subsection{Relation to positroids}\label{subsection:positroids}

\begin{definition}
    Let $A$ be a full rank $d \times n$ matrix with real entries such that all its maximal minors are non-negative. The representable matroid $M(A)$ associated to $A$ is called a \defin{positroid}.
\end{definition}

The property of being a positroid is strongly dependent on the total ordering on the ground set of the matroid; the ordering we use here is the standard ordering for the rook matroid introduced in Section~\ref{section:rook_matroid}. The proof method we use is reminiscent of Oh's proof of the positroid property of lattice path matroids~\cite{Oh2011Positroids}. 

\begin{theorem}\label{thm:rook_matroids_positroids}
   The rook matroid $\rookMat_{\lambda/\mu}$ is a positroid.
\end{theorem}

\begin{proof}
    Let $\lambda/\mu$ be a skew shape with $r$ rows and $n-r$ columns. Define the $r \times n$ matrix $F = I_{r \times r} |  D_{\lambda/\mu}$ where $I_{r \times r}$ is an $r\times r$ identity matrix and $D_{\lambda/\mu}$ is the $r \times n-r$ matrix with columns indexed by $[r+1, n]$ and defined by \[
    (D_{\lambda/\mu})_{i, j} = \begin{cases}
        (-1)^{i+r}x_{i}^{j-1} \quad \text{if $(i, j) \in \lambda/\mu$}, \\
        0  \qquad \qquad \quad\text{otherwise.}
    \end{cases} 
    \]
    where the $x_{i}$ for $i \in [r]$ are positive reals whose  values we will specify later. Note that the support of $F$ (i.e. the set of non-zero entries) is exactly the same as that of the incidence matrix corresponding to the set system $A_{\lambda/\mu}$ defined after Fact~\ref{fact:skew_shape_characterization}. Let $B = L \sqcup J$, where $L = B \cap [r]$, $J = B \cap [r+1, n]$, and $|L| = \ell$. Let $\Delta_{B}$ be the minor of $F$ with columns from $B$. By the observation on the support of $F$, if $B$ is not a basis, then $\Delta_{B} = 0$; and if $B$ is a basis, then $\Delta_{B}$ has at least one non-zero term in its expansion as a determinant. We show that the term dominating this expansion is positive. Expanding $\Delta_{B}$ along the columns of $L$ (each of which is some standard basis vector) from left to right, we pick up a sign equal to $(-1)^{K}$, where $K = \sum_{a \in L}a + \frac{\ell(3-\ell)}{2}$. We then extract a factor of $(-1)^{b+r}$ for each $b \in [r] \setminus L$. We are then left with the minor of $D_{\lambda/\mu}$ with rows from $[r] \setminus L$ and columns from $J$. Since $B$ is a basis, the set $J$ can be matched to the rows $[r]\setminus L$ by a non-nesting rook placement. With the rows ordered increasingly and the columns of $J$ ordered decreasingly, this matching contributes the anti-diagonal monomial, so the anti-diagonal term of this minor is nonzero. By choosing $x_{i}$ such that $x_{r} > 1$ is sufficiently large, and $x_{r-i} = x_{r-i+1}^{r^{2}}$ for $i =1, \ldots , r-1$, we see that the anti-diagonal term, i.e., the term arising from the longest permutation on $r- \ell$ elements, strictly dominates the sum of the absolute values of all other non-zero terms. This anti-diagonal term has sign $(-1)^{\binom{r-\ell}{2}}$. Since the second term in the expression for $K$ above is equivalent mod $2$ to $\binom{\ell + 1}{2}$, the gathered powers of $-1$ reduce to
    \[
    \sum_{a \in L}a + \binom{\ell +1}{2} + \sum_{b \in [r] \setminus L}(b+r) + \binom{r-\ell}{2} =  2r(r-\ell) + \ell(\ell+1),
    \]
    which is even and hence the dominant term is positive and $\Delta_{B} > 0$.  
\end{proof}

\begin{example}\label{eg:rook_as_positroid}
    Let $\lambda/\mu = 4433/2$. The sets in the transversal presentation of $\rookMat_{\lambda/\mu}$ are $A_{1} = \{1,7,8\}$, $A_{2} = \{2,5,6,7,8\}$, $A_{3} = \{3,5,6,7\}$, $A_{4} = \{4,5,6,7\}$. Then $\rookMat_{\lambda/\mu}$ is realized by the following matrix $B$, all of whose maximal minors are non-negative:
    \[
B=\left[
\begin{array}{cccc|cccc}
1 & 0 & 0 & 0 & 0 & 0 & -x_{1}^{6} & -x_{1}^{7} \\
0 & 1 & 0 & 0 & x_{2}^{4} & x_{2}^{5} & x_{2}^{6} & x_{2}^{7} \\
0 & 0 & 1 & 0 & -x_{3}^{4} & -x_{3}^{5} & -x_{3}^{6} & 0 \\
0 & 0 & 0 & 1 & x_{4}^{4} & x_{4}^{5} & x_{4}^{6} & 0
\end{array}
\right].
\]
\end{example}

\subsection{Tutte polynomials}\label{subsection:tutte_shadow}

In this subsection, we show that the Tutte polynomials
of rook matroids and lattice path matroids coincide, thereby tightening the correspondence between these classes of matroids.

The \defin{Tutte polynomial} is defined as the corank-nullity generating function of the matroid $M$: \[
\defin{T_{M}(x, y)} \coloneqq \sum_{A \subseteq E}(x-1)^{r(E)-r(A)}(y-1)^{|A|-r(A)}.
\]
For an overview of results on Tutte polynomials, see \cite{Ellis2022HandbookTutte}. The important fact about the Tutte polynomial that we will use is that it satisfies the deletion-contraction recursion:
\begin{equation}
 T_{M}(x,y) = T_{M/e}(x,y) + T_{M\setminus e}(x,y),
\end{equation}
where $e \in E$ is neither a loop nor a coloop. 

We need the following lemma, the proof of which is simple and hence omitted.  

\begin{lemma}\label{lem:rookSimpleSymmetry}
 Let $M$ be a rook matroid on a skew shape with $r$ rows. Then the two deletions $M\setminus r$ and $M \setminus (r+1)$ are isomorphic, and thus have the same Tutte polynomial.
\end{lemma}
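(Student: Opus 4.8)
The plan is to exhibit an explicit bijection of ground sets and check that it carries bases to bases, which is precisely what a matroid isomorphism is. By Lemma~\ref{lem:easyRookMinors} (together with the definition of contraction), the bases of $\rookMat_{\lambda/\mu}/r$ are the sets $R(\rho)\cup C(\rho)$ with the element $r$ removed, taken over non-nesting placements $\rho$ in which the bottom row $r$ is occupied; and the bases of $\rookMat_{\lambda/\mu}/(r+1)$ are the sets $R(\rho)\cup C(\rho)$ with $r+1$ removed, taken over placements in which the leftmost column $r+1$ is unoccupied. Since $\rookMat_{\lambda/\mu}/r$ has ground set $\{1,\dots,r-1\}\cup\{r+1,\dots,r+c\}$ while $\rookMat_{\lambda/\mu}/(r+1)$ has ground set $\{1,\dots,r\}\cup\{r+2,\dots,r+c\}$, the natural candidate is the map $g$ that fixes every index except sending $r+1\mapsto r$. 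I would prove that $g$ induces a bijection between these two families of bases.

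The key structural input is that a non-nesting placement is an antichain: if its rooks are listed by increasing row index, the column indices strictly decrease. Hence the smallest occupied column is used by the rook in the lowest occupied row. Applying this to a placement $\rho$ with row $r$ occupied yields a clean dichotomy: either column $r+1$ is empty, or the unique (and necessarily leftmost) rook of row $r$ sits in the bottom-left cell $(r,r+1)$. In the first case the corresponding basis of $\rookMat_{\lambda/\mu}/r$ contains $r+1$, and $g$ sends it to the basis of $\rookMat_{\lambda/\mu}/(r+1)$ coming from the \emph{same} $\rho$. In the second case the basis contains neither $r$ nor $r+1$, so $g$ fixes it set-wise, and a short computation identifies it with the basis of $\rookMat_{\lambda/\mu}/(r+1)$ coming from $\rho$ with the corner rook $(r,r+1)$ deleted, a placement that now leaves both row $r$ and column $r+1$ empty.

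To finish I would verify surjectivity onto the bases of $\rookMat_{\lambda/\mu}/(r+1)$: a placement with column $r+1$ empty either already has row $r$ occupied, matching the first case, or has row $r$ empty, in which case adjoining a rook in the cell $(r,r+1)$ produces a placement of the second case. The one point that genuinely uses the skew-shape hypothesis — and the step I expect to be the main thing to state carefully — is that this adjunction is legal, i.e. that $(r,r+1)\in\lambda/\mu$. This holds because $\mu$ is weakly decreasing, so lower rows begin at least as far left as higher rows; consequently the leftmost column is reached by the bottom row and the corner cell lies in the shape. (Injectivity is automatic since $g$ is a bijection of ground sets.) With surjectivity in hand, $g$ is a ground-set bijection inducing a bijection of bases, hence a matroid isomorphism $\rookMat_{\lambda/\mu}/r\cong\rookMat_{\lambda/\mu}/(r+1)$; equality of the Tutte polynomials then follows, as isomorphic matroids have the same Tutte polynomial.
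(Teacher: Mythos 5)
Your proof is correct and follows essentially the same route as the paper's: the identical ground-set bijection fixing everything except $r+1\mapsto r$, the same dichotomy on whether the corner cell $(r,r+1)$ carries a rook, and the same add/remove-corner-rook correspondence between placements with row $r$ occupied and placements with column $r+1$ empty. If anything, your write-up is more careful than the paper's brief argument, since you spell out the antichain observation that forces any rook in column $r+1$ to sit at $(r,r+1)$ and you justify why that cell lies in the skew shape.
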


For the purpose of readability, we use $T(M; x, y)$ in place of $T_{M}(x, y)$ ahead. 

\begin{theorem}\label{thm:sameTutte}
Let $\lambda/\mu$ be a skew shape. 
Then $T(\pathMat_{\lambda/\mu}; x,y) = T(\rookMat_{\lambda/\mu};x,y)$.
\end{theorem}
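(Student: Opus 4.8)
The plan is to prove the identity by induction on $r+c$, where $\lambda/\mu$ has $r$ rows and $c$ columns, using the deletion--contraction recursion for the Tutte polynomial. The goal is to show that \emph{both} sides satisfy the same recursion
\[
T(M_{\lambda/\mu};x,y) = T(M_{\sigma};x,y) + T(M_{\nu};x,y),
\]
where $\sigma$ is the shape obtained from $\lambda/\mu$ by deleting its leftmost column, $\nu$ is the shape obtained by deleting its bottommost row, and $M$ ranges over the rook and lattice path families alike. Since the two families agree on the base cases and both reductions lower $r+c$ by one, the theorem then follows. The base cases are $c=0$ (the ground set is all loops, so $T=y^{r}$) and $r=0$ (all coloops, $T=x^{c}$), which agree for both families; disconnected shapes are reduced to their connected pieces using Lemma~\ref{lem:direct_sum} and the analogous direct-sum decomposition of lattice path matroids, together with multiplicativity of $T$ over direct sums.

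On the rook side the recursion is essentially handed to us. I would take the distinguished element to be the last row $r$, which is never a coloop (the empty placement witnesses that row $r$ can be unoccupied) and is a loop only when the bottom row is empty. Away from that degenerate case, $T(\rookMat_{\lambda/\mu}) = T(\rookMat_{\lambda/\mu}/r) + T(\rookMat_{\lambda/\mu}\setminus r)$. By Lemma~\ref{lem:rookSimpleSymmetry} we have $\rookMat_{\lambda/\mu}/r \cong \rookMat_{\lambda/\mu}/(r+1)$, and Lemma~\ref{lem:easyRookMinors} identifies $\rookMat_{\lambda/\mu}/(r+1)$ with the rook matroid on $\sigma$ and $\rookMat_{\lambda/\mu}\setminus r$ with the rook matroid on $\nu$. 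This gives exactly the displayed recursion for the rook family.

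On the lattice path side I would perform deletion--contraction at the first step of the path, the step leaving the Southwest corner of $\lambda/\mu$. Identifying a path with its set of East steps, contracting this element restricts to paths whose first step is East and then forgets it, which is precisely the lattice path matroid on $\sigma$; deleting it restricts to paths whose first step is North, which is the lattice path matroid on $\nu$ (this is the standard deletion--contraction geometry of lattice path matroids, cf.\ \cite{Bonin2003lattice}). Hence $\pathMat_{\lambda/\mu}$ obeys the same recursion as $\rookMat_{\lambda/\mu}$, and the inductive hypothesis applied to $\sigma$ and $\nu$ closes the argument.

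The step I expect to be the main obstacle is the consistent bookkeeping of loops and coloops, ensuring the two recursions line up with identical correction factors even though they are computed at \emph{different} ground-set elements (row $r$ on the rook side, the first step on the path side). The clean dictionary I would establish is the following: an empty leftmost column makes that column a coloop in $\rookMat_{\lambda/\mu}$ and forces the first path step to be East (a coloop) in $\pathMat_{\lambda/\mu}$, so in this case both Tutte polynomials equal $x\cdot T(M_{\sigma})$; dually, an empty bottom row makes row $r$ a loop and forces the first path step to be North (a loop), so both equal $y\cdot T(M_{\nu})$. For connected shapes with nonempty extreme row and column, neither distinguished element is a loop or coloop and the plain two-term recursion applies. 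Verifying that removing a row or column never disconnects a connected shape into two nonempty blocks (it can only create empty rows or columns, which are peeled off as loops or coloops) is what makes the induction self-contained, and is the technical heart of the proof.
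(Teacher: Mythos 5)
Your proposal is correct and follows essentially the same route as the paper's proof: deletion--contraction at the bottom row on the rook side, deletion--contraction at the first path step on the lattice-path side, and Lemma~\ref{lem:rookSimpleSymmetry} together with Lemma~\ref{lem:easyRookMinors} to match the contraction by row $r$ with the shape whose leftmost column is removed. The only differences are cosmetic (induction on $r+c$ rather than on $|\lambda/\mu|$, and a more explicit treatment of the loop/coloop degeneracies).
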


\begin{proof}
We proceed by induction over the size of $\lambda / \mu$, the number of squares in the diagram of the skew shape. The base case is immediate since both the lattice path matroid and the rook matroid on a single square are equal to the uniform matroid $U_{1,2}$.

Assume that the statement holds for all diagrams with 
fewer boxes than in $\lambda/\mu$.
Let $D$ denote the diagram $\lambda/\mu$ and let us 
consider the following additional diagrams:
\[
\substack{
 \begin{ytableau}
 \none & \none & \none & \none & *(pBlue) & *(pBlue) & *(pBlue) \\
\none &    *(pBlue)    &     *(pBlue)  &  *(pBlue)     & *(pBlue) & *(pBlue) & *(pBlue) \\
 \none &    *(pBlue)   &    *(pBlue)   &    *(pBlue)   & *(pBlue) &  *(pBlue) \\
 \none &    *(pBlue)   &    *(pBlue)   &  *(pBlue) \\
 \end{ytableau}
 \\  D }
\quad 
\substack{
 \begin{ytableau}
 \none & \none & \none & \none & *(pBlue) & *(pBlue) & *(pBlue) \\
\none &         &     *(pBlue)  &  *(pBlue)     & *(pBlue) & *(pBlue) & *(pBlue) \\
 \none &        &    *(pBlue)   &    *(pBlue)   & *(pBlue) &  *(pBlue) \\
 \none &       &  *(pBlue)   &  *(pBlue) \\
 \end{ytableau}
 \\  D' }
\qquad 
\substack{
 \begin{ytableau}
\none &  \none & \none & \none & *(pBlue) & *(pBlue) & *(pBlue) \\
\none &    *(pBlue)    &     *(pBlue)  &  *(pBlue)     & *(pBlue) & *(pBlue) & *(pBlue) \\
 \none &    *(pBlue)   &    *(pBlue)   &    *(pBlue)   & *(pBlue) &  *(pBlue) \\
\none &        &     &   \\
 \end{ytableau}
 \\  D'' }
\quad 
\substack{
 \begin{ytableau}
  \none & \none & \none & \none & *(pBlue) & *(pBlue) & *(pBlue) \\
  \none &   *(pBlue)    &     *(pBlue)  &  *(pBlue)     & *(pBlue) & *(pBlue) & *(pBlue) \\
  \none &    *(pBlue)   &    *(pBlue)   &    *(pBlue)   & *(pBlue) &  *(pBlue) \\
  \none &    *(pAlgae)   &    *(pAlgae)   &  *(pAlgae) \\
 \end{ytableau}
 \\  D \setminus r }
\]
In $D'$ we have removed the first column of $D$, in $D''$ we have removed the last row of $D$, indexed by $r$. The last picture indicates that the last row contains a rook.

The Tutte polynomial for the lattice path matroid satisfies
\begin{equation}\label{eq:tutte_LPM}
  T(\pathMat_D;x,y) =  T(\pathMat_{D'};x,y) + T(\pathMat_{D''};x,y),
\end{equation}
by deletion-contraction, which corresponds to conditioning on whether or not the first step in the path is an East step or a North step.

Similarly, the Tutte polynomial for the rook matroid satisfies
\begin{equation}\label{eq:tutte_rook}
  T(\rookMat_D;x,y) =  T(\rookMat_{D} \setminus r;x,y) +  T(\rookMat_{D''};x,y),
\end{equation}
by considering deletion-contraction of the last row index.
By Lemma~\ref{lem:rookSimpleSymmetry}, 
we know that $T(\rookMat_{D}\setminus r;x,y) = T(\rookMat_{D} \setminus (r+1);x,y)$ = $T(\rookMat_{D'};x,y)$.
By the induction hypothesis, we have that 
\[
 T(\pathMat_{D''};x,y) = T(\rookMat_{D''};x,y) \text{ and }
 T(\pathMat_{D'};x,y) = T(\rookMat_{D'};x,y).
\]
Substituting the terms in the RHS of~(\ref{eq:tutte_rook}) with the expressions above and using~(\ref{eq:tutte_LPM}), it follows that $T(\pathMat_D;x,y) = T(\rookMat_D;x,y)$.
\end{proof}

 The Tutte polynomial is a valuative invariant of a matroid: it behaves well under subdivisions of matroid polytopes. On the basis of Theorem~\ref{thm:sameTutte} and computer experiments, we make the case that every valuative invariant has the same value on $\rookMat_{\lambda /\mu}$ as it does on $\pathMat_{\lambda /\mu}$. 

\begin{conjecture}\label{conjecture:sam_G_invariants}
Let $\lambda / \mu$ be a skew shape and let $\rookMat_{\lambda /\mu}$ and $\pathMat_{\lambda /\mu}$ be the corresponding rook matroid and lattice path matroid respectively. Then for every valuative invariant $f$ for matroids, we have \[
f(\rookMat_{\lambda /\mu}) = f(\pathMat_{\lambda /\mu}).
\]
\end{conjecture}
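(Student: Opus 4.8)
The plan is to place the conjecture inside the theory of valuative invariants and reduce it to the equality of a single universal invariant. By the theorem of Derksen and Fink, Derksen's $\mathcal{G}$-invariant is universal among valuative invariants of matroids: two matroids $M_{1}, M_{2}$ on ground sets of equal size satisfy $f(M_{1}) = f(M_{2})$ for every valuative invariant $f$ if and only if $\mathcal{G}(M_{1}) = \mathcal{G}(M_{2})$. Since $\rookMat_{\lambda/\mu}$ and $\pathMat_{\lambda/\mu}$ share the ground set $E_{\lambda/\mu}$, the conjecture is therefore equivalent to the single identity
\[
  \mathcal{G}(\rookMat_{\lambda/\mu}) = \mathcal{G}(\pathMat_{\lambda/\mu}).
\]
Recall that $\mathcal{G}(M)$ records, for each $0/1$ word $s$ of length $r+c$ with exactly $c$ ones, the number of linear orderings $\pi$ of $E_{\lambda/\mu}$ whose rank sequence (the word whose $i$-th letter is $1$ exactly when adjoining the $i$-th element of $\pi$ raises the rank) equals $s$. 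Thus the whole problem becomes combinatorial: one must show that $\rookMat_{\lambda/\mu}$ and $\pathMat_{\lambda/\mu}$ induce the same multiset of rank sequences as $\pi$ ranges over all orderings of $E_{\lambda/\mu}$.

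Two reductions serve as guide rails. First, whenever $\lambda/\mu$ is $332/1$-avoiding the two matroids are isomorphic by Theorem~\ref{thm:rookPathBij}, so their rank-sequence multisets coincide and there is nothing to prove; the content of the conjecture lies entirely in shapes containing $332/1$. Second, the Tutte polynomial is itself a valuative invariant, so Theorem~\ref{thm:sameTutte} is precisely the assertion that the two $\mathcal{G}$-invariants agree after the specialisation recovering $T$. This is a strong consistency check, and it shows that the minimal nontrivial instance---the pair $Q_{6} \cong \rookMat_{332/1}$ and $\pathMat_{332/1}$---already has matching Tutte polynomials. Settling the conjecture for this base case reduces to a direct computation of the two rank-sequence multisets, both for rank-$3$ matroids on six elements.

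For a general shape I would attempt a recursion mirroring the proof of Theorem~\ref{thm:sameTutte}: peel off the first column and the last row of $\lambda/\mu$ to obtain the smaller shapes $D'$ and $D''$, and then try to express $\mathcal{G}(\rookMat_{D})$ and $\mathcal{G}(\pathMat_{D})$ through the $\mathcal{G}$-invariants of the smaller shapes together with a valuative analog of the symmetry Lemma~\ref{lem:rookSimpleSymmetry} (which ought to persist at the level of rank sequences, since the bijection there preserves the ranks of all initial segments). A second route is to expand the valuative classes $[\rookMat_{\lambda/\mu}]$ and $[\pathMat_{\lambda/\mu}]$ in the Derksen--Fink basis of Schubert-matroid classes and to compare coefficients, using the rectangular decomposition of $\lambda/\mu$ and the spine bijection to control the contributions rectangle by rectangle.

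The main obstacle is that deletion--contraction, which drives the Tutte computation, is \emph{not} a valuative operation: the deletion and contraction of $M$ at $e$ are opposite facets of the base polytope rather than the full-dimensional pieces of an honest subdivision, so the additive recursion used for $T$ does not transport verbatim to $\mathcal{G}$. The crux is therefore to produce either a genuine subdivision of the base polytope of $\rookMat_{\lambda/\mu}$ realising the first-column/last-row conditioning, or---more promisingly---a direct rank-sequence-preserving bijection between orderings of $E_{\lambda/\mu}$ for the two matroids, built by composing the spine bijection on $332/1$-avoiding subregions with explicit local surgeries at each occurrence of $332/1$. Controlling how these local surgeries interact across overlapping copies of $332/1$ is the delicate point, and is what keeps the statement at the level of a conjecture.
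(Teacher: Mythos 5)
The statement you are addressing is left as a conjecture in the paper: the authors support it only by Theorem~\ref{thm:sameTutte} (equality of Tutte polynomials) and computer experiments, and offer no proof. So there is no argument of theirs to compare yours against, and your proposal --- which explicitly stops short of a proof --- is consistent with the status of the statement. Your framing is correct and is the standard one: by Derksen--Fink universality the conjecture is equivalent to the single identity $\mathcal{G}(\rookMat_{\lambda/\mu}) = \mathcal{G}(\pathMat_{\lambda/\mu})$, the $332/1$-avoiding case is vacuous by Theorem~\ref{thm:rookPathBij}, and Theorem~\ref{thm:sameTutte} is exactly the specialisation of this identity to the Tutte polynomial. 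You also correctly identify the obstruction: the deletion--contraction step driving the proof of Theorem~\ref{thm:sameTutte} decomposes the matroid into $M\setminus e$ and $M/e$, whose base polytopes are \emph{faces} of $P(M)$ rather than the cells of a subdivision, so the additive recursion $[M] = [M\setminus e] + [M/e]$ is false in the valuative group and the Tutte argument does not lift to $\mathcal{G}$. Lemma~\ref{lem:rookSimpleSymmetry} does transport to $\mathcal{G}$ for free, since it is an honest matroid isomorphism, but that alone does not assemble the recursion.

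The gap, which you name yourself, is that neither of your two routes is carried out: you neither exhibit a subdivision of $P(\rookMat_{\lambda/\mu})$ realising the row/column conditioning, nor construct the rank-sequence-preserving bijection on orderings of $E_{\lambda/\mu}$, and the interaction of local surgeries at overlapping copies of $332/1$ is precisely where such a bijection would have to be controlled. One further caution: even verifying the base case $\mathcal{G}(\rookMat_{332/1}) = \mathcal{G}(\pathMat_{332/1})$ by direct computation would not bootstrap to general shapes without a valuative decomposition scheme, since $\mathcal{G}$ is not multiplicative or recursive over subshapes in any obvious way. As a research plan this is the right skeleton; as a proof it establishes nothing beyond what Theorem~\ref{thm:sameTutte} already gives.
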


In particular, the same would hold for a universal valuative invariant like the Derksen--Fink $\mathcal{G}$-invariant  \cite{DerksenFink2010ValutativeInvariants}. For a systematic treatment of valuative invariants of matroids from a computational viewpoint, see \cite{FerroniSchroter2022Valuative}. Since our preprint first appeared, Conjecture~\ref{conjecture:sam_G_invariants} has been proved by Bonin and de Mier in \cite{BoninDeMier2025EqualConfig} using the concept of the configuration of a matroid.

\section{Distributional properties of non-nesting rook numbers}\label{section:distributional_properties}
In this section, we use the underlying matroid structure to deduce the distributional properties of the coefficients of the non-nesting rook polynomial. The main result of the following subsection can be seen as a non-nesting analog of the bipartite case of the Heilmann--Lieb theorem on the real-rootedness of the matching polynomial~\cite{HeilmannLieb1972, Nijenhuis1976}. For the definition of Lorentzian polynomials, see~\cite{BrandenHuh2020Lorentzian}.

\subsection{Ultra-log-concavity of \texorpdfstring{$M_{\lambda /\mu}$}{mmu}}
Let $\lambda / \mu$ be a skew shape with $r$ rows and $c$ columns. 
The basis-generating polynomial $r_{\lambda/\mu}$ of $\mathcal{R}_{\lambda / \mu}$ serves as the appropriate multivariate generalization of $M_{\lambda / \mu}$, the univariate non-nesting rook polynomial introduced in Section~\ref{subsection:boards_skew-shapes_rooks}. To emphasize the role played by column and row variables separately --- which will also be useful in Section~\ref{section:rooks_as_linear_extensions} --- we use $\xvec = (x_{1}, \ldots , x_{r})$ and $\yvec = (y_{r+1}, \ldots , y_{r+c})$ for the row and column variables of $r_{\lambda / \mu}$ respectively. By Theorem~\ref{thm:rook_is_transversal}, bases of rook matroids correspond to occupied column indices taken together with unoccupied row indices of rook placements; we can thus write

\begin{equation}
r_{\lambda / \mu}(\xvec, \yvec) = \sum_{\rho \in \NN_{\lambda / \mu}}\prod_{i \in R(\rho)}x_{i}\prod_{j \in C(\rho)}y_{j}\label{eq:multivariate_rook},
\end{equation}

where as before $\NN_{\lambda /\mu}$ denotes the set of non-nesting rook placements on $\lambda / \mu$ and $R(\rho), C(\rho)$ correspond to the set of row indices not occupied by $\rho$ and the set of column indices that are occupied by $\rho$ respectively.

\begin{corollary}\label{ultra_log_concavity_ncm}
    For every skew shape $\lambda / \mu$, the coefficient sequence of the non-nesting rook polynomial $M_{\lambda / \mu}(t) = \sum_{k=0}^{r}a_{k}(\lambda / \mu)t^{k} $ is  ultra-log-concave with no internal zeros. That is, \[
    \left(\dfrac{a_{k}}{\binom{r}{k}}\right)^{2} \geq \dfrac{a_{k-1}}{\binom{r}{k-1}}\cdot \dfrac{a_{k+1}}{\binom{r}{k+1}} \quad \text{for all $1\leq k \leq r-1$}.
    \]
Moreover, there exists a skew shape $\alpha/\beta$ such that $M_{\alpha/\beta}(t)$ is not real-rooted. 
\end{corollary}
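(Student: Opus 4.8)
The plan is to obtain $M_{\lambda/\mu}$ as a specialization of the basis-generating polynomial of the rook matroid and then apply the Stanley--Yan inequality in the form of Theorem~\ref{thm:stanley_1981matroid}. First I would invoke Theorem~\ref{thm:rook_is_transversal} to view $\rookMat_{\lambda/\mu}$ as a matroid of rank $c$ on the ground set $E_{\lambda/\mu}=[r]\cup[r+1,r+c]$ whose bases are exactly the sets $R(\rho)\cup C(\rho)$ with $\rho\in\NN_{\lambda/\mu}$. By \cite{BrandenHuh2020Lorentzian} its basis-generating polynomial $r_{\lambda/\mu}(\xvec,\yvec)$ from \eqref{eq:multivariate_rook} is Lorentzian.

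The key step is the choice of distinguished set. Taking $T=[r]$ to be the set of row indices, every basis $B=R(\rho)\cup C(\rho)$ satisfies $|B\cap T|=|R(\rho)|$, which is exactly the number of rooks of $\rho$. Thus in the notation of Theorem~\ref{thm:stanley_1981matroid} we have $f_i=\#\{B:|B\cap T|=i\}=r_i(\lambda/\mu)$, and the theorem yields that $(r_i)_i$ is ultra-log-concave with no internal zeros. Concretely, setting $x_i=t$ and $y_j=s$ in $r_{\lambda/\mu}$ produces a homogeneous bivariate Lorentzian polynomial whose coefficient sequence is $(r_i)$, and for homogeneous bivariate polynomials Lorentzianness is equivalent to ultra-log-concavity with no internal zeros.

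The main obstacle is matching the normalization. The specialization above is homogeneous of degree equal to the rank $c$, so it delivers ultra-log-concavity with respect to $\binom{c}{k}$, whereas the corollary asserts the sharper statement with respect to $\binom{d}{k}$, where $d=\deg M_{\lambda/\mu}$ is the maximal number of rooks and may be strictly less than $c$. For a sequence supported on $\{0,\dots,d\}$, order-$d$ ultra-log-concavity is strictly stronger than order-$c$ (the two pointwise inequalities differ by the factor $\frac{(c-k)(d-k+1)}{(c-k+1)(d-k)}>1$), and Lorentzianness is not preserved when one divides out the factor $s^{c-d}$; hence the refinement does not follow formally from Theorem~\ref{thm:stanley_1981matroid} applied to $\rookMat_{\lambda/\mu}$ itself. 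To close this gap I would aim to exhibit the genuine degree-$d$ homogenization $\sum_{k=0}^{d}r_k\,t^k s^{d-k}$ as Lorentzian, most naturally by realizing $(r_k)$ as the $T$-counting sequence of a matroid of rank exactly $d$. For the rectangle $\lambda_{a,b}=(a^b)$ this is accomplished by the uniform matroid $U_{\min(a,b),\,a+b}$ with $T$ an $a$-element subset, where $f_k=\binom{a}{k}\binom{b}{k}=r_k$ and the corollary reduces to the log-concavity of $\binom{\max(a,b)}{k}$; producing such a rank-$d$ witness (or an equivalent degree-$d$ Lorentzian certificate) for an arbitrary skew shape is the crux.
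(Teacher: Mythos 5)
Your first two paragraphs are, essentially verbatim, the paper's entire proof: Corollary~\ref{ultra_log_concavity_ncm} is proved there in one sentence by applying Theorem~\ref{thm:stanley_1981matroid} to $\rookMat_{\lambda/\mu}$ with $T=[r]$, exactly as you do, including the key observation that $|B\cap [r]|=|R(\rho)|$ counts the rooks of $\rho$. So at the level of approach you have found the intended argument.

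The normalization issue you raise in your final paragraph is genuine, and the paper's proof does not address it either. Theorem~\ref{thm:stanley_1981matroid} yields ultra-log-concavity of the padded sequence $(r_k)_{k=0}^{c}$ relative to $\binom{c}{k}$, where $c=\operatorname{rank}\rookMat_{\lambda/\mu}$ is the number of columns, whereas the corollary is stated for $(r_k)_{k=0}^{d}$ relative to $\binom{d}{k}$ with $d=\deg M_{\lambda/\mu}\leq c$. As you correctly compute, the degree-$d$ inequality is pointwise stronger when $d<c$, and order-$c$ ultra-log-concavity does not formally imply order-$d$ ultra-log-concavity: for instance $(5,9,5,0)$ is ultra-log-concave of order $3$ (since $9=(9/3)^2\cdot 9\geq 5\cdot\tfrac{5}{3}\cdot 9$, i.e.\ $9\geq 25/3$) but not of order $2$ (since $(9/2)^2<25$); equivalently, dividing a bivariate Lorentzian form by $s^{c-d}$ need not preserve Lorentzianness. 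The paper silently identifies the two normalizations. Consequently, if one reads the conclusion with $\binom{c}{k}$ in place of $\binom{d}{k}$ (or settles for ordinary log-concavity with no internal zeros, which either normalization implies), your argument is complete and coincides with the paper's; if one insists on the statement as literally written, then your proposal, like the paper's proof, leaves the gap open --- your proposed rank-$d$ witness is exhibited only for rectangles, and a degree-$d$ Lorentzian certificate for a general skew shape is exactly what is missing.
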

\begin{proof}
Suppose $\lambda/\mu$ has $r$ rows and $c$ columns. Let $M$ be the rook matroid $\rookMat_{\lambda/\mu}$, $P_{M}$ be the corresponding basis-generating polynomial, and $T = [r+1, r+c]$. Consider $f(t, s) = P_{M}(t\cdot \mathbf{1}_{T} + s\cdot\mathbf{1}_{T^{c}})$ where $\mathbf{1}_T$ is the characteristic vector of $T$. Note that $P_{M}$ can be expressed as in~(\ref{eq:multivariate_rook}). The result then follows from the fact that $P_{M}$ is Lorentzian~\cite[Theorem 3.10]{BrandenHuh2020Lorentzian}, specializing variables preserves the Lorentzian property, and a homogeneous bivariate polynomial is  Lorentzian if and only if its coefficient sequence is ultra-log-concave with no internal zeros. 

The smallest counterexample to the real-rootedness of the non-nesting rook polynomial is given in Corollary~\ref{cor:nn_with_non_real_roots}, along with fuller context.
\end{proof}

\begin{remark}
Combining the corollary above with Proposition~\ref{lem:setBijectionPathRook}, it follows that the generating polynomial of the number of valleys ranging over the set of lattice paths contained inside a partition $\lambda$ is ultra-log-concave. Showing this using the matroidal structure of lattice path matroids alone appears to be difficult. 
\end{remark}

At this juncture, we note the parallel between the non-nesting rook polynomial and the full rook polynomial: both are ultra-log-concave. We will see that this is essentially the strongest property we can hope for, as it turns out that unlike the full rook polynomial the non-nesting counterpart is \textit{not} real-rooted. In Section~\ref{subsection:matroidal_lifts}, the proper context of this failure of real-rootedness will be given.

\subsection{Lattice path matroids are not HPP}\label{subsection:LPM_not_HPP}
In the previous subsection, we saw that ultra-log-concavity holds for the non-nesting rook numbers. In this subsection, we consider a strengthening of real-rootedness (stability) and give the first explicit example of a lattice path matroid that does not have the half-plane property. For the definitions and overview of stable polynomials in the context of matroid theory, see~\cite{Branden2015}. 

A robust necessary condition that a homogeneous, multiaffine polynomial is stable is that its support forms the collection of bases of a matroid \cite{ChoeOxleySokalWagner2004}. The \defin{half-plane property} (HPP) of a matroid $M$ identifies whether the converse is true, namely that the basis-generating polynomial of $M = (E, \mathcal{B})$ defined as
\[
P_{M}(\xvec) = \sum_{B \in  \mathcal{B}}\prod_{i \in B}x_{i},
\]
is stable. We call such a matroid HPP. In \cite{ChoeOxleySokalWagner2004}, an in-depth investigation of the half-plane property for matroids was carried out. One question raised -- a ``wild speculation'' according to \cite{ChoeOxleySokalWagner2004} --  was whether transversal matroids have the HPP. Shortly thereafter, Choe and Wagner provided an example of a rank $4$ transversal matroid on $12$ elements that was not HPP \cite{ChoeWagner2006}. Their example, however, is not a lattice path matroid; this can be straightforwardly checked using a criterion described in \cite[Theorem 3.14]{Bonin2006lattice}.
To our knowledge, the question of whether lattice path matroids have the HPP is thus still open. The purpose of this subsection is to give the first explicit example of a lattice path matroid---in fact a Catalan matroid---that does not have the half-plane property. 

\begin{theorem}\label{thm:LPM_not_HPP}
    There exists a generalized Catalan matroid that is not HPP. Concretely, the basis-generating polynomial of the matroid $\pathMat_{\lambda}$ for $\lambda = 666333$ is not stable. 
\end{theorem}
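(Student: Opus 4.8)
The plan is to reduce the stability question to a finite, verifiable inequality and then exhibit an explicit certificate of its failure. First I would record that $\lambda = 666333$ is $332/1$-avoiding: its rectangular decomposition consists of only two rectangles (columns $1,2,3$, which span all six rows, and columns $4,5,6$, which span the top three rows), so condition (iii) of Proposition~\ref{prop:q6av} holds vacuously. Hence by Theorem~\ref{thm:rookPathBij} we have $\rookMat_{666333} \cong \pathMat_{666333}$, a generalized Catalan (Schubert) matroid; this justifies the phrasing of the statement and lets me compute with whichever description---rook placements or lattice paths---is more convenient. Concretely, $\rookMat_{666333}$ is the rank-$6$ transversal matroid on $[12]$ with presentation $A_1=A_2=A_3=\{1,\dots,6\}\cup\{\text{col}\}$ and $A_4=A_5=A_6=\{1,2,3\}\cup\{\text{col}\}$, and its basis-generating polynomial $P_{\rookMat_{666333}}(\xvec)$ is the sum of monomials over maximal partial transversals.

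The key tool is the necessary condition that if a multiaffine polynomial $P$ with real coefficients is stable, then for every pair $i \neq j$ the Rayleigh difference
\[
\Delta_{ij}P \coloneqq (\partial_i P)(\partial_j P) - P\,(\partial_i\partial_j P)
\]
is non-negative at every real point; by Brändén's theorem this is in fact a characterization of stability for multiaffine $P$. Since $P_{\rookMat_{666333}}$ is multiaffine with $0/1$ coefficients and homogeneous of degree equal to the rank, this criterion applies directly, and to prove non-stability it suffices to produce a single pair $(i,j)$ together with one point $\mathbf{v}$---which we may take to have non-negative coordinates---at which $\Delta_{ij}P(\mathbf{v}) < 0$.

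To locate such a certificate I would exploit the symmetry of the shape. The partition $666333$ is self-conjugate, so by Proposition~\ref{prop:matroid_dual} the matroid is self-dual, and there is an evident symmetry permuting the three ``long'' columns among themselves and the three ``short'' columns among themselves. This lets me restrict attention to a single representative pair $(i,j)$---for instance two long-column indices, or a row and a column index of the same rectangle---and collapse the remaining ten variables to common values (one for each block) while keeping $x_i, x_j$ free. After this symmetric specialization, $\Delta_{ij}P$ becomes an explicit polynomial in $x_i$, $x_j$, and one or two block parameters $t$, of low enough degree to inspect by hand, and I would then exhibit explicit non-negative values at which it is negative.

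The main obstacle is computational rather than conceptual: the number of lattice paths in $666333$ is large, so the unspecialized Rayleigh differences are unwieldy, and the pair $(i,j)$ and the specialization must be chosen carefully so that a negative value survives rather than cancelling. The self-conjugate symmetry is precisely what makes this feasible, reducing a twelve-variable verification to the inspection of a low-degree polynomial in a few variables; once a violating point is produced, its negativity is a finite check and the conclusion that $P_{\rookMat_{666333}}$ is not stable follows immediately from the stability criterion above.
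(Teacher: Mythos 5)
Your identification of the matroid is fine: $666333$ has a two-rectangle decomposition, so it is $332/1$-avoiding and $\rookMat_{666333}\cong\pathMat_{666333}$ is indeed a Schubert (generalized Catalan) matroid, and Br\"{a}nd\'{e}n's characterization of stability for multiaffine polynomials via Rayleigh differences over \emph{all} real points is a legitimate route to non-stability. But your plan contains a step that provably cannot succeed: you propose to find a violating point $\mathbf{v}$ ``which we may take to have non-negative coordinates.'' No such point exists. By Theorem~\ref{thm:rook_matroid_as_positroid} the matroid $\rookMat_{666333}$ is a positroid, and positroids have the Rayleigh property \cite{Marcott2016PositroidsRayleigh} (this is discussed immediately after the theorem you are proving), which says precisely that $\Delta_{ij}P(\mathbf{v})\ge 0$ for every $\mathbf{v}$ in the non-negative orthant and every pair $i,j$. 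The half-plane property is equivalent to the \emph{strong} Rayleigh condition, i.e.\ non-negativity of $\Delta_{ij}P$ on all of $\mathbb{R}^{12}$, so any certificate of failure must use coordinates of mixed sign. As written, your search space is exactly the region where the inequality is guaranteed to hold.

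Beyond that, the proposal never actually exhibits the certificate, and since the entire content of the theorem is one explicit counterexample, nothing is proved until that finite check is carried out; the symmetry reduction helps but you would still be manipulating Rayleigh differences of a $662$-term degree-$6$ polynomial over points with negative entries. The paper's proof is a much lighter computation along a different necessary condition: specialize the basis-generating polynomial by setting $x_1=x_2=x_3=y_{10}=y_{11}=y_{12}=t$ and the remaining six variables to $1$. Stability is preserved under real specialization, and a univariate real stable polynomial is real-rooted; the resulting polynomial $f(t)=t^{6}+36t^{5}+225t^{4}+400t^{3}$ has a pair of non-real roots (near $-3.6855\pm 0.6232i$), so $P_{\rookMat_{666333}}$ is not stable. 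If you want to keep your route, drop the non-negativity restriction on $\mathbf{v}$ and be prepared to search over sign patterns; otherwise the diagonal specialization is the efficient certificate here.
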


\begin{proof}
Let $\lambda = 666333$ and $P(\xvec)$ be the basis-generating polynomial of $\pathMat_{\lambda}$. Consider the univariate polynomial $f(t)$ obtained by the substitution $x_{i} =t$ for $i=1, \ldots , 6$ and $x_{j}=1$, for $j=7, \ldots,12$ in $P(\xvec)$. Using SageMath~\cite{SageMath}, we found that \[f(t) = t^{6}+36t^{5}+225t^4 + 400t^{3}.\]
    This polynomial has non-real roots near $-3.6855 \pm 0.6232i.$ Since specializing variables in stable polynomials to real values preserves stability, it follows that $P(\xvec)$ is not stable.
\end{proof}

\begin{corollary}\label{cor:Catalan_M10_not_HPP}
    The Catalan matroid $M_{10}$ is not HPP. 
    That is, the basis-generating polynomial of $M_{10}$ is not stable.
\end{corollary}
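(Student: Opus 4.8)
The plan is to read this corollary as an isomorphic reformulation of Theorem~\ref{thm:LPM_not_HPP}. Concretely, I would identify the Catalan matroid $M_{10}$ with the rook matroid $\rookMat_{666333}$ whose basis-generating polynomial was just shown to be non-stable, and then transfer non-stability across that identification. The only substantive step is pinning down the isomorphism; the transfer of the half-plane property itself is formal, since stability is an invariant of the matroid and not of its chosen labeling.

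First I would exhibit $\rookMat_{666333}$ as a (generalized) Catalan matroid. The shape $\lambda = 666333$ is a connected Ferrers shape whose rectangular decomposition consists of exactly two rectangles: the $6\times 3$ block formed by columns $1,2,3$, whose common row set is $\{1,\dotsc,6\}$, and the $3\times 3$ block formed by columns $4,5,6$, whose row set is $\{1,2,3\}$. Since criterion (iii) of Proposition~\ref{prop:q6av} can be violated only by a triple of \emph{consecutive} rectangles, and here there are only two, the shape $666333$ avoids $332/1$ vacuously. Theorem~\ref{thm:rookPathBij} then gives a matroid isomorphism $\rookMat_{666333} \cong \pathMat_{666333}$, and because $\mu$ is the empty partition, $\pathMat_{666333}$ is by definition a generalized Catalan matroid, namely the matroid $M_{10}$.

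It remains to record that the half-plane property is invariant under matroid isomorphism. If $\sigma$ is an isomorphism carrying a matroid $M$ to a matroid $N$, then the basis-generating polynomial $P_{N}(\xvec)$ is obtained from $P_{M}(\xvec)$ simply by permuting the variables according to $\sigma$, and stability is preserved under any permutation of variables. Hence $M_{10} \cong \rookMat_{666333}$ fails to be HPP exactly because $\rookMat_{666333}$ does, which is precisely the content of Theorem~\ref{thm:LPM_not_HPP}; the non-real roots near $-3.6855 \pm 0.6232i$ of the specialization $t^{6}+36t^{5}+225t^{4}+400t^{3}$ already certify this.

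The main obstacle, such as it is, lies entirely in the identification $\rookMat_{666333} \cong M_{10}$: one has to check that $666333$ falls inside the $332/1$-avoiding regime so that Theorem~\ref{thm:rookPathBij} applies and the rook matroid genuinely coincides with a lattice path (hence generalized Catalan) matroid. Once the two-rectangle decomposition is observed this is immediate, and no further computation is needed beyond what was carried out in the proof of Theorem~\ref{thm:LPM_not_HPP}.
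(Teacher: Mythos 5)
Your identification of $M_{10}$ is wrong, and this is a genuine gap rather than a cosmetic one. In this paper (and in \cite{Bonin2006lattice}) the Catalan matroid $M_{10}$ is the lattice path matroid on the staircase shape of order $10$: it has $20$ elements and rank $10$, and its number of bases is the Catalan number $C_{10}$. The matroid $\pathMat_{666333}$, by contrast, is a \emph{generalized} Catalan matroid (a Schubert matroid) on only $12$ elements of rank $6$. These two matroids cannot be isomorphic --- their ground sets have different sizes --- so the step ``$\pathMat_{666333}$ is by definition a generalized Catalan matroid, namely the matroid $M_{10}$'' does not hold. What you have actually reproved is Theorem~\ref{thm:LPM_not_HPP} (the existence of a non-HPP generalized Catalan matroid), not the corollary about $M_{10}$ itself. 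Your preliminary observations are fine as far as they go: $666333$ is a straight shape with only two rectangles in its rectangular decomposition, so it is $332/1$-avoiding and $\rookMat_{666333}\cong\pathMat_{666333}$ by Theorem~\ref{thm:rookPathBij}; and stability of the basis-generating polynomial is indeed invariant under relabeling the ground set. But neither of these closes the distance to $M_{10}$.

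The missing idea, which is the content of the paper's proof, is to realize $\rookMat_{666333}$ as a \emph{minor} of $M_{10}$ and then invoke the fact that the half-plane property is minor-closed. Concretely, $M_{10}$ is the rook matroid on the staircase of size $10$ (rows $1,\dotsc,10$, columns $11,\dotsc,20$), and by Lemma~\ref{lem:easyRookMinors} deleting the row indices $X=\{4,5,9,10\}$ and contracting the column indices $Y=\{14,15,19,20\}$ removes those rows and columns from the shape, leaving exactly $666333$; hence $\rookMat_{666333}\cong M_{10}\setminus X/Y$. Since deletion (setting variables to zero) and contraction (taking partial derivatives) both preserve stability, a matroid all of whose minors include a non-HPP matroid cannot itself be HPP, so Theorem~\ref{thm:LPM_not_HPP} forces $M_{10}$ to fail the half-plane property. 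You would need to supply both the explicit minor computation and the minor-closedness of HPP to make your argument go through.
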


\begin{proof}
 Every generalized Catalan matroid is a minor of some Catalan matroid \cite[Theorem 4.2]{Bonin2006lattice}. In particular, if $M_{10}$ denotes the Catalan matroid of order $10$ (defined as the lattice path matroid on the Ferrers shape $\delta_{10}$), then $\pathMat_{666333}$ is a minor of $M_{10}$. Since having the half-plane property is closed under taking minors~\cite{ChoeOxleySokalWagner2004}, it follows that $M_{10}$ cannot have the half-plane property. 
\end{proof}
\begin{remark}
In \cite{Xu2015} it is stated that the class 
of generalized Catalan matroids is not HPP, 
however, no explicit counterexample is given.
\end{remark}

\section{Rook placements as linear extensions}\label{section:rooks_as_linear_extensions}

In this section, we show a bijective correspondence between labeled skew shapes and posets of width two. We obtain an interpretation for the non-nesting rook polynomial in terms of a $P$-Eulerian polynomial of a poset and deduce the following results. The $P$-Eulerian polynomial of a naturally labeled width two poset is ultra-log-concave, thereby completing the picture of the Neggers--Stanley conjecture, which was disproved using a naturally labeled width two counterexample \cite{Stembridge2007NeggersStanleyCounter}.  We deduce this from a stronger statement, namely that a suitable multivariate analog of the $P$-Eulerian polynomial---analogous to the one considered in \cite{BrandenLeander2016x}---is Lorentzian. Returning to the rook polynomial setting, we learn that there exists a skew shape $\lambda / \mu$ for which the non-nesting rook polynomial $M_{\lambda /\mu}$ has non-real roots. We end by considering the gamma-positivity properties of  $M_{\lambda/\mu}$.

\subsection{Neggers--Stanley conjecture}
In this subsection we recall the statement and context of the Neggers--Stanley conjecture. For undefined terminology concerning posets, we refer to \cite{StanleyEC1}. 

 Recall that given a poset $P$, the \defin{width} of $P$ is the size of the largest antichain in $P$. A labeling of a poset $P$ on $n$ elements is a bijection $\omega: P \to [n]$. We say that $\omega$ is a \defin{natural labeling} if $i \prec j$ implies $\omega(i) < \omega(j).$  The \defin{Jordan-H\"{o}lder set} of $(P, \omega)$ is the set of all permutations of $[n]$ the inverses of which are linear extensions of $(P, \omega)$ i.e., it is defined as \[
\mathcal{L}(P, \omega) = \{\sigma \in \mathfrak{S}_{n}: i \prec j \implies \sigma^{-1}({\omega(i)}) < \sigma^{-1}({\omega(j)})\}.
\]
In other words, $\sigma \in \mathcal{L}(P)$ if for every relation $i \prec j$ in $P$, we have that $\omega(i)$ precedes $\omega(j)$ in the one-line representation of the permutation $\sigma.$

The \defin{$(P, \omega)$-Eulerian polynomial}, also known as the \defin{$W$-polynomial of $P$}, is the descent-generating polynomial of the Jordan-H\"{o}lder set of $(P, \omega)$: \[
W_{P, \omega}(t) = \sum_{\sigma \in \mathcal{L}(P, \omega)}t^{\des(\sigma)}.
\]

When $\omega$ is natural, we simply write $W_{P}$, since the polynomial is independent of the choice of natural labeling. Observe that in the definitions of the multivariate analog that follow, there is a dependence on the choice of natural labeling that we make explicit. 

The distributional properties of $W_{P, \omega}$ were of early interest to combinatorialists working in poset theory. The following conjecture was first formulated by Neggers in 1978 for natural labelings $\omega$; in 1986, Stanley extended it to arbitrary labelings. Subsequent references to this conjecture also called it the Poset conjecture \cite{Brenti1989UnimodalAMS}. For further background, the reader can consult \cite[Section 6]{Branden2015}.

\begin{conjecture}[Neggers--Stanley conjecture \cite{Neggers1978, Stanley1986}]\label{conjecture:neggers_stanley}
Let $(P, \omega)$ be a labeled poset. Then $W_{P, 
  \omega}$ is real-rooted.
\end{conjecture}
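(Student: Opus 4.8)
The natural plan is to route real-rootedness through real stability, using the fact recorded after Definition~\ref{definition:stability} that a univariate polynomial is stable if and only if it is real-rooted. Thus I would aim to exhibit a homogeneous multivariate lift of $W_{P,\omega}$ that is real stable, and then specialize the extra variables to recover $W_{P,\omega}(t)$; since stability is preserved under such specializations, real-rootedness would follow. A convenient reformulation to set this up comes from Stanley's theory of $(P,\omega)$-partitions: one has $\sum_{m\ge 0}\Omega_{P,\omega}(m)\,t^m = W_{P,\omega}(t)/(1-t)^{n+1}$, where $\Omega_{P,\omega}$ is the order polynomial, so $W_{P,\omega}$ is the $h^{\ast}$-polynomial of the order polytope of $(P,\omega)$. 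Real-rootedness of $h^{\ast}$-polynomials of lattice polytopes is exactly the kind of question that stable and Lorentzian polynomial techniques are designed to address.

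The concrete mechanism would then be an induction on the poset. I would decompose $\mathcal{L}(P,\omega)$ by conditioning on the position of a distinguished maximal (or minimal) element and set up a transfer-type recursion, hoping that each structural operation on $(P,\omega)$ --- deleting a top element, taking disjoint or ordinal sums, inserting a covering relation --- acts on the multivariate descent enumerator by a \emph{stability-preserving} linear operator, in the spirit of the Borcea--Br\"{a}nd\'{e}n symbol calculus used in Lemma~\ref{lem:borcea_branden_snake_matroids}. With the one-element base case $x_1+y_1$ being stable, a complete such toolbox of stability preservers would close the induction and establish the conjecture.

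The hard part, however, is not a missing lemma but the fact that the statement as worded is \emph{false}: no such global proof exists. Any stability-based or inductive scheme is therefore forced to break down, and the point of failure is precisely that the order-polytope $h^{\ast}$-polynomial --- equivalently the descent enumerator $W_{P,\omega}$ --- is not stable in general. This was shown by Br\"{a}nd\'{e}n in 2004 \cite{Branden2004} for a labeled poset and sharpened by Stembridge in 2007 \cite{Stembridge2007NeggersStanleyCounter}, who produced a \emph{naturally} labeled poset of width two whose $W_P$ has non-real roots; the present paper reproduces exactly this phenomenon on the rook side. Consequently the only defensible ``proof'' is of a weakened conclusion. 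The strongest available replacement, which this paper carries out, keeps the natural-labeling and width-two hypotheses but trades real-rootedness for ultra-log-concavity: one transports $W_P$ to the basis-generating polynomial of a rook matroid $\rookMat_{\lambda/\mu}$ and invokes its Lorentzian property, so that ultra-log-concavity of $W_P$ falls out of Corollary~\ref{ultra_log_concavity_ncm}. I would therefore not attempt the conjecture as stated; I would prove this ultra-log-concave surrogate and record the Br\"{a}nd\'{e}n and Stembridge counterexamples as the obstruction that rules out the real-rooted version.
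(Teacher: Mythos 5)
You are right that this statement is not a theorem to be proved: it is the Neggers--Stanley conjecture, which the paper itself records as \emph{false}, disproved by Br\"{a}nd\'{e}n \cite{Branden2004} and, for naturally labeled width-two posets, by Stembridge \cite{Stembridge2007NeggersStanleyCounter}, so no proof attempt could succeed and the paper offers none. Your proposed resolution --- abandoning real-rootedness, citing these counterexamples, and proving instead ultra-log-concavity of $W_P$ for naturally labeled width-two posets by identifying $\widetilde{W}_{P,\omega}$ with the Lorentzian basis-generating polynomial of a rook matroid --- is precisely the route the paper takes in Theorem~\ref{thm:path_to_poset} and Corollary~\ref{neggers_stanley_width2}.
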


The Neggers--Stanley conjecture was of central importance to algebraic combinatorics until its resolution in the negative in the early aughts: first by Br\"{a}nd\'{e}n \cite{Branden2004NeggersStanley} who found a family of counterexamples to Stanley's formulation and then Stembridge \cite{Stembridge2007NeggersStanleyCounter} who disproved Neggers' counterpart as well. In both cases, the counterexample furnished was of a width two poset; Br\"{a}nd\'{e}n's construction was non-naturally labeled while Stembridge's (larger) counterexample was naturally labeled. Despite this breakthrough, the question of unimodality or log-concavity of $W_{P, \omega}$ for general $(P, \omega)$ remained open. In particular, the following conjecture of Brenti has been open since 1989. 

\begin{conjecture}\label{conjecture:brenti1989conjecture}\cite[Conjecture 1.1]{Brenti1989UnimodalAMS}
Let $(P, \omega)$ be a labeled poset. Then $W_{P, 
  \omega}$ is log-concave with no internal zeros.
\end{conjecture}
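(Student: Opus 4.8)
The plan is to target the stronger conclusion of ultra-log-concavity, which forces log-concavity with no internal zeros, by exhibiting for each labeled poset $(P,\omega)$ a homogeneous bivariate specialization of a \emph{Lorentzian} multivariate polynomial whose coefficient sequence is exactly that of $W_{P,\omega}$. This is precisely the mechanism used in the proof of Theorem~\ref{thm:stanley_1981matroid}: a homogeneous bivariate polynomial is Lorentzian if and only if its coefficient sequence is ultra-log-concave with no internal zeros. The paper carries out this program in the naturally labeled width-two case, where $\mathcal{L}(P,\omega)$ is in bijection with lattice paths in a skew shape, $W_P$ becomes a shift of the non-nesting rook polynomial $M_{\lambda/\mu}$, and Corollary~\ref{ultra_log_concavity_ncm} supplies the Lorentzian certificate through the rook matroid $\rookMat_{\lambda/\mu}$. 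The task is then to remove the two hypotheses ``width two'' and ``naturally labeled.''

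First I would reduce arbitrary labelings to natural ones. By Stanley's $(P,\omega)$-partition theory, $W_{P,\omega}$ is the $h^{*}$-numerator of the order series $\sum_{m\ge 0}\Omega_{P,\omega}(m)\,t^{m}=W_{P,\omega}(t)/(1-t)^{n+1}$, so the statement is equivalent to log-concavity of the $h^{*}$-vector of the $(P,\omega)$-order polytope. I would attempt to transfer the multivariate lift of Br\"{a}nd\'{e}n--Leander~\cite{BrandenLeander2016x} through this correspondence, controlling how the descent statistic is redistributed relative to a natural labeling of the same underlying poset, so as to write $W_{P,\omega}$ as a coefficientwise-nonnegative specialization of a single homogeneous polynomial whose support I can hope to certify $M$-convex in the sense of Definition~\ref{definition:M_convex}.

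Next, for natural labelings of arbitrary width $w$, I would replace the skew shape by a Dilworth chain decomposition into $w$ chains and encode linear extensions as lattice paths in $\mathbb{Z}^{w}$. The multivariate $W$-polynomial then has support indexed by these higher-dimensional paths, and I would try to prove it Lorentzian by induction on $|P|$, mirroring the way a skew shape is built from a single cell by the operations (I)--(IV) of Proposition~\ref{prop:q6av}: each elementary poset extension should correspond to a Lorentzian-preserving operator in the spirit of Lemma~\ref{lem:borcea_branden_snake_matroids}, so that both $M$-convexity of the support and the Lorentzian signature condition of Definition~\ref{definition:Lorentzian} are maintained.

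The hard part --- and the reason the conjecture has resisted proof since 1989 --- is that both reductions break down exactly where the matroid structure disappears. For width $w\ge 3$ the set of $w$-dimensional lattice paths is \emph{not} the set of bases of a matroid, its exponent support is not known to be $M$-convex, and verifying the Lorentzian signature condition for the Br\"{a}nd\'{e}n--Leander lift is essentially equivalent to the conjecture itself rather than a consequence of the Stanley--Yan inequality. For non-natural labelings the descent statistic is genuinely non-monotone, which is what powers the Br\"{a}nd\'{e}n and Stembridge counterexamples to real-rootedness, so no single Lorentzian certificate is forced by the combinatorics. I therefore expect the decisive step to be either (a) proving $M$-convexity of the higher-width path support directly and then establishing the Lorentzian signature via a Lindstr\"{o}m--Gessel--Viennot determinant argument generalizing the width-two path count, or, failing that, (b) abandoning the convex-geometric route in favor of an explicit descent-preserving injection certifying $r_{k-1}r_{k+1}\le r_{k}^{2}$ poset-theoretically. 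Of these, (a) is the more promising but is currently out of reach in full generality, and it is this $M$-convexity-plus-signature obstruction that I regard as the crux of the whole problem.
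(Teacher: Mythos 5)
The statement you were asked to prove is Conjecture~\ref{conjecture:brenti1989conjecture}, and the first thing to be clear about is that the paper does not prove it: it is stated as an open conjecture (Brenti, 1989), and the paper explicitly presents it as unresolved. What the paper actually establishes is the special case of \emph{naturally labeled width-two} posets (Corollary~\ref{neggers_stanley_width2}), via the correspondence of Theorem~\ref{thm:path_to_poset} identifying $\widetilde{W}_{P,\omega}$ with the basis-generating polynomial of a rook matroid, followed by the Stanley--Yan inequality (Theorem~\ref{thm:stanley_1981matroid} and Corollary~\ref{ultra_log_concavity_ncm}). Your proposal correctly reconstructs exactly that mechanism, and your diagnosis of why it stops there is accurate: for width $w\ge 3$ the lattice-path encoding of $\mathcal{L}(P,\omega)$ no longer has matroidal (or otherwise $M$-convex, in the sense of Definition~\ref{definition:M_convex}) support, so the Lorentzian certificate of Definition~\ref{definition:Lorentzian} is not available, and for non-natural labelings the descent statistic has no monotone structure to exploit --- which is precisely what drives the Br\"{a}nd\'{e}n and Stembridge counterexamples to real-rootedness.

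As a proof, however, the proposal has a genuine and decisive gap, which you yourself concede: steps (a) and (b) are programs, not arguments. Nothing in the proposal establishes $M$-convexity of the higher-width support (and one should expect this to fail outright in general, since the union of chain-indexed exponent vectors for $w\ge 3$ need not satisfy the exchange condition), nothing controls how the Br\"{a}nd\'{e}n--Leander lift transforms under a change from an arbitrary labeling $\omega$ to a natural one, and no injection certifying $r_{k-1}r_{k+1}\le r_k^2$ is constructed. A further caution: you target ultra-log-concavity for \emph{all} labeled posets, which is strictly stronger than the conjecture; the paper gives evidence for ULC only in the naturally labeled width-two case, and there is no justification offered (here or in the paper) that the stronger statement is even true in general, so the strengthening may be setting up an attack on a false target. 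In short, your write-up is a sound survey of the known partial result and the obstructions, but it does not constitute a proof of the statement --- and since the statement is an open conjecture, no such proof exists in the paper to compare against.
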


There have been two positive results towards this end. Reiner and Welker proved that when $P$ is naturally labeled and graded, $W_{P}$ is unimodal and symmetric \cite{ReinerWelker2005CharneyDavis}. Br\"{a}nd\'{e}n then gave an elegant combinatorial proof of a stronger property: namely that $W_{P}$ is $\gamma$-positive for the larger class of sign-graded posets \cite{Branden06SignGraded, Branden2008ActionsOnPermutations}.

 In the next subsection, we show a strengthening of  Conjecture~\ref{conjecture:brenti1989conjecture} for special posets $P$. Namely, we show that when $P$ is naturally labeled and of width two, then $W_{P}$ is ultra-log-concave. We do so by formulating an appropriate multivariate analog of $W_{P}$ and recognizing it as the basis-generating polynomial of a rook matroid.

\subsection{Matroidal lifts of \texorpdfstring{$P$}{P}-Eulerian polynomials}\label{subsection:matroidal_lifts}

Given a skew shape $\lambda / \mu$, our goal is to obtain a suitable poset $P$ 
such that the $W$-polynomial of $P$ is equal to the non-nesting 
rook polynomial of $\lambda / \mu$. 
We do so in Theorem~\ref{thm:path_to_poset}. 
Before we state this, we introduce a multivariate analog of $W_{P}$. Our choice is distinct from, but inspired by, the definition in~\cite{BrandenLeander2016x}.

Let $(P, \omega)$ be a naturally labeled poset of 
width two. Fix a chain decomposition $C_{1}\sqcup C_{2}$ of $P$ where the chain $C_{1}$ has $r$ elements and 
the chain $C_{2}$ has $c$ elements. We emphasize that the multivariate polynomial $\widetilde{W}_{P, \omega}$ defined ahead depends on the choice of chain decomposition of $P$, but the univariate polynomial $W_{P, \omega}$ does not. (The chain decomposition of $(P, \omega)$ is unique when $P$ is irreducible~\cite[Proposition 5.1]{Stembridge2007NeggersStanleyCounter}.) With respect to this chain decomposition, define 
 $\widetilde{W}_{P, \omega}\in \mathbb{N}[x_{e},y_{e'}: e \in C_{1},e' \in C_{2}]$ as 
\begin{equation}
    \widetilde{W}_{P, \omega}(\xvec, \yvec) = \sum_{\sigma \in \mathcal{L}(P, \omega)}\prod_{i \in \text{RA}(\sigma)}x_{i}\prod_{j \in \text{DT}(\sigma)}y_{j},
\end{equation}
where \begin{enumerate}
    \item $\text{RA}(\sigma) = \{\sigma_{i} \in [r+c]: \sigma_{i} > \sigma_{i-1}, \sigma_{i} \in C_{1}\}$ is the set of \defin{row ascents} of $\sigma$, i.e. the set of ascent tops lying in $C_{1}$ and $\sigma_{0} = - \infty$.
        \item $\text{DT}(\sigma) = \{\sigma_{i} \in [r+c]: \sigma_{i}>\sigma_{i+1}\}$ is the set of \defin{descent tops}.
\end{enumerate}

The condition in the definition of $\text{RA}(\sigma)$ implies that the element $\sigma_{1}$ is an 
ascent top if and only if $\sigma_{1} \in C_{1}$, which by virtue of $C_{1}$ being a chain with minimum element $1$, is equivalent to $\sigma_{1} = 1$. Here $\mathrm{RA}$ stands for row ascent, for a reason that will be made clear during the course of the proof of Theorem~\ref{thm:path_to_poset}. Also note that $\widetilde{W}_{P, \omega}$ is homogeneous of degree $|C_{1}|$. It is not necessarily true that for every natural labeling $\omega$, this polynomial has only $0/1$ coefficients; this holds, however, in the example below. 

\begin{example}\label{eg:W_tilde_example}
Suppose $(P, \omega)$ is the labeled poset on the left below. Here, $P$ is irreducible (i.e. it cannot be written as the ordinal sum of two subposets) with chains $C_{1} = \{1,2,4\}$ and $C_{2} = \{3,5\}$. The Jordan--Hölder set is $\mathcal{L}(P, \omega) = \{ 31254,
 31245,
 13245, 12345, 12354,
 13254,
 13524,
 31524\}$. The polynomial $\widetilde{W}_{P, \omega}$ 
is a polynomial in the variables $x_{1}$, $x_{2}$, $x_{4}$, $y_{3}$, $y_{5}$ and 
is equal to
\[
\begin{tikzpicture}[baseline=(current bounding box.center)]
\node[dot=4pt, fill=black, label={left:{$1$}},label={[xshift=0cm, yshift=-0.85cm, text=black]{$C_{1}$}}] at (0, 0) (a1) {};
\node[dot=4pt, fill=black, label={left:{$2$}}] at (0, 2) (a2) {};
\node[dot=4pt, fill=black, label={left:{$4$}}] at (0, 4) (a3) {};

\node[dot=4pt, fill=black, label={right:{$3$}},label={[xshift=0cm, yshift=-0.85cm, text=black]{$C_{2}$}}] at (4, 0) (a6) {};
\node[dot=4pt, fill=black, label={right:{$5$}}] at (4, 2) (a7) {};

\draw[very thick,black] (a1)-- (a2) -- (a3);
\draw[very thick,black] (a6)-- (a7);
\draw[very thick,black] (a6)-- (a3);
\draw[very thick,black] (a1) -- (a7);
\end{tikzpicture}
\qquad
\text{$\widetilde{W}_{P, \omega}$ } 
= x_{2} y_{3} y_{5} + x_{2} x_{4} y_{3} + x_{1} x_{4} y_{3} + x_{1} x_{2} x_{4} + x_{1} x_{2} y_{5} + x_{1} y_{3} y_{5} + x_{1} x_{4} y_{5} + x_{4} y_{3} y_{5}.
\]

\end{example}

Recall that if $r_{\lambda / \mu}$ is the basis-generating polynomial of the rook matroid on $\lambda/\mu$, then we can express it in terms of row variables $x_{1}, \ldots , x_{r}$ and column variables $y_{r+1}, \ldots , y_{r+c}$ as \[
r_{\lambda/\mu}(\xvec, \yvec) = \sum_{\rho \in \NN_{\lambda /\mu}} \prod_{i \in R(\rho)}x_{i}\prod_{j \in C(\rho)}y_{j}.
\]

We can now state the main theorem of the section.

\begin{theorem}\label{thm:path_to_poset}
    Let $\lambda / \mu$ be a skew shape and $r_{\lambda / \mu}$ be the basis-generating polynomial of the rook matroid on $\lambda / \mu$. There exists a naturally labeled poset $(P, \omega)$ of width two such that 
    \begin{equation}\label{eq:rook_as_w_polynomial} 
    r_{\lambda / \mu}(\xvec, \yvec) \cong  \widetilde{W}_{P, \omega}(\xvec, \yvec) 
    \end{equation}
    where $\cong$ denotes equality up to reindexing of the variables.
\end{theorem}
\begin{remark}
    The fact that linear extensions of posets of width $k$ bijectively correspond to lattice paths contained within a compact region of $\mathbb{R}^{k}$ follows from the standard theory of distributive lattices (see~\cite{Dilworth1950DecompositionTheorem} or~\cite[pg. 296]{StanleyEC1}, for example). Similar bijections to ours appear in \cite[Lemma 8.1]{ChanPakPanova2022CrossProdConj} and an unpublished paper of Stanley \cite{Stanley2023WidthTwo}. The theorem above can be seen as a matroidal and multivariate refinement of the aforementioned results.
\end{remark}

\begin{proof}[Proof of Theorem~\ref{thm:path_to_poset}]
We begin by relabeling the rows and columns of $\lambda / \mu$ in the following manner. 
Denote the innermost lattice path of the skew shape by $L_{\lambda /\mu}$ and number the 
rows of $\lambda / \mu$ from bottom to top and the columns from left to right by the North 
and East steps of $L_{\lambda /\mu}$ respectively. Every reference to a cell $(i, j)$ of the skew shape is with respect to this labeling. See Figure \ref{fig:narayana_snake_LPM} 
for an example. Let $C_{1}$ and $C_{2}$ be the two chains defining $P$, labeled 
respectively by the row and column labels of $\lambda / \mu$ that were induced by $L_{\lambda /\mu}$. Let this labeling be $\omega$. 
Hereafter, in the context of cover relations, we will use $i, j$ in place of 
$\omega(i), \omega(j)$.

Add the cover relation $i \precdot j$ to $P$ for every outer corner $(i, j)$ 
of $\lambda / \mu$ and similarly the cover relation $j \precdot i$ for 
every inner corner $(i, j)$. This construction is illustrated in 
Figure~\ref{fig:poset_from_skew_shape}. We make the following claim:

\textbf{Claim 1:} The poset $(P, \omega)$ is naturally labeled and of width two. 

\textbf{Proof of claim 1:} The width two observation is immediate from the construction of $P$. To see that $P$ is naturally labeled, we make an intermediate claim: for every cell $(i, j) \in  \lambda / \mu$ (with the above labeling), $i < j.$ Since the row labels decrease from top to bottom, it suffices to show this for a cell $(i, j)$ that is the topmost box in its column. This holds since every topmost box in a column lies directly under or to the right of a $\mathtt{NE}$ segment of the innermost path $L_{\lambda/\mu}$.

Now consider the cover relation $a \precdot b$ where $a \in C_{1}, b \in C_{2}$. This implies that $(a, b)$ is an 
outer corner of the shape. The cell to 
the immediate left of $(a, b)$, say $(a, c)$, must lie in $\lambda / \mu$ and 
hence $a<c$, by the intermediate claim. Since the column labeling increases 
from left to right, we must also have $a<b$. The inner corner case is similarly proved, which finishes the proof of the claim.

\begin{figure}[htbp]
\begin{subfigure}[b]{0.30\textwidth}
\centering
\begin{tikzpicture}[inner sep=0in,outer sep=0in]
\node (n) {\begin{varwidth}{6cm}{
\ytableausetup{boxsize=1.25em}
\begin{ytableau} \none & \none[4] & \none[6] & \none[7] & \none[9] & \none[10]  \\ \none[8] & \none & \none & \none &  &  \\ \none[5] & \none &  &  & \\ \none[3] & & \none &  &  \\ \none[2] &  &  \\ \none[1] &  \\ \end{ytableau}}\end{varwidth}};
\coordinate (a1) at ([xshift=1.30em]n.south west);
%\node(A1)[] at (a1) {$\bullet$};
\coordinate (a2) at ($(a1)+( 0, 1.30em)$);
\coordinate (a3) at ($(a2)+(0, 1.3em)$);
\coordinate (a4) at ($(a3)+(0, 1.30em)$);
\coordinate (a5) at ($(a4)+(1.30em, 0)$);
\coordinate (a6) at ($(a5)+(0, 1.30)$);
\coordinate (a7) at ($(a6)+(1.30em, 0)$);
\coordinate (a8) at ($(a7)+(1.30em, 0)$);
\coordinate (a9) at ($(a8)+(0, 1.30em)$);
\coordinate (a10) at ($(a9)+(1.30em, 0)$);
\coordinate (a11) at ($(a10)+(1.30em, 0)$);

\coordinate (b1) at ($(a2)+(1.30em, 0)$);
\coordinate (b2) at ($(a4)+(1.30em, 0)$);
\coordinate (b3) at ($(a6)+(1.30em, 0)$);
\coordinate (b4) at ($(a8)+(1.30em, 0)$);

\draw[very thick,dashed] (a1)-- (a2) -- (a3)-- (a4)-- (a5)-- (a6)-- (a7)-- (a8)-- (a9)-- (a10)--(a11);
\coordinate (c1) at ([xshift=1.30em]n.south west);

\coordinate (c2) at ($(c1)+(1.30em, 0)$);
\coordinate (c3) at ($(c2)+(0, 1.30em)$);
\coordinate (c4) at ($(c3)+(0, 1.30em)$);
\coordinate (c5) at ($(c4)+(1.30em, 0)$);
\coordinate (c6) at ($(c5)+(0, 1.30em)$);
\coordinate (c7) at ($(c6)+(1.30em, 0)$);
\coordinate (c8) at ($(c7)+(0, 1.30em)$);
\coordinate (c9) at ($(c8)+(1.30em, 0)$);
\coordinate (c10) at ($(c9)+(1.30em, 0)$);
\coordinate (c11) at ($(c10)+(0, 1.30em)$);
\node[dot=4pt, fill=red] at (a5) {};
\node[dot=4pt, fill=red] at (a8) {};
\node[dot=4pt, fill=blue] at (c3) {};
\node[dot=4pt, fill=blue] at (c5) {};
\node[dot= 4pt, fill=blue] at (c9) {};

\end{tikzpicture}
\caption{Skew shape with innermost path marked with dashes.}
\label{fig:narayana_snake_LPM}
\end{subfigure}
~
\begin{subfigure}[b]{0.30 \textwidth}
\centering
\begin{tikzpicture}
\node[dot=4pt, fill=black, label={left:{$a_{1}$}}, label={[xshift=-0.75cm, yshift=-0.25cm, text=blue]{$1$}},label={[xshift=0cm, yshift=-0.85cm, text=black]{$C_{1}$}}] at (0, 0) (a1) {};
\node[dot=4pt, fill=black, label={left:{$a_{2}$}}, label={[xshift=-0.75cm, yshift=-0.25cm, text=blue]{$2$}}] at (0, 2) (a2) {};
\node[dot=4pt, fill=black, label={left:{$a_{3}$}}, label={[xshift=-0.75cm, yshift=-0.25cm, text=blue]{$3$}}] at (0, 4) (a3) {};
\node[dot=4pt, fill=black, label={left:{$a_{4}$}}, label={[xshift=-0.75cm, yshift=-0.25cm, text=blue]{$5$}}] at (0, 6) (a4) {};
\node[dot=4pt, fill=black, label={left:{$a_{5}$}}, label={[xshift=-0.75cm, yshift=-0.25cm, text=blue]{$8$}}] at (0, 8) (a5) {};

\node[dot=4pt, fill=black, label={right:{$a_{6}$}}, label={[xshift=0.75cm, yshift=-0.25cm, text=blue]{$4$}},label={[xshift=0cm, yshift=-0.85cm, text=black]{$C_{2}$}}] at (4, 0) (a6) {};
\node[dot=4pt, fill=black, label={right:{$a_{7}$}}, label={[xshift=0.75cm, yshift=-0.25cm, text=blue]{$6$}}] at (4, 2) (a7) {};
\node[dot=4pt, fill=black, label={right:{$a_{8}$}}, label={[xshift=0.75cm, yshift=-0.25cm, text=blue]{$7$}}] at (4, 4) (a8) {};
\node[dot=4pt, fill=black, label={right:{$a_{9}$}}, label={[xshift=0.75cm, yshift=-0.25cm, text=blue]{$9$}}] at (4, 6) (a9) {};
\node[dot=4pt, fill=black, label={right:{$a_{10}$}}, label={[xshift=0.85cm, yshift=-0.25cm, text=blue]{$10$}}] at (4, 8) (a10) {};

\draw[very thick,black] (a1)-- (a2) -- (a3)-- (a4)-- (a5);
\draw[very thick,black] (a6)-- (a7)-- (a8)-- (a9)-- (a10);
\draw[very thick,blue] (a1)-- (a7);
\draw[very thick,blue] (a2)-- (a8);

\draw[very thick,blue] (a4)-- (a10);

\draw[very thick,red] (a6)-- (a4);
\draw[very thick,red] (a8)-- (a5);
\end{tikzpicture}
\caption{Width two $(P, \omega)$ corresponding to skew shape. }
  \label{fig:poset_from_skew_shape}
\end{subfigure}
~
\begin{subfigure}[b]{0.30\textwidth}
     \begin{tikzpicture}[inner sep=0in,outer sep=0in]
\node (n) {\begin{varwidth}{6cm}{
\ytableausetup{boxsize=1.25em}
\begin{ytableau} \none & \none[4] & \none[6] & \none[7] & \none[9] & \none[10]  \\ \none[8] & \none & \none & \none &  & \rook \\ \none[5] & \none &  &  \rook & \\ \none[3] &  &  \rook &  &  \\ \none[2] &  &  \\ \none[1] &  \rook \\ \end{ytableau}}\end{varwidth}};
\coordinate (a1) at ([xshift=1.30em]n.south west);

\coordinate (a2) at ($(a1)+( 0, 1.30em)$);
\coordinate (a3) at ($(a2)+(0, 1.3em)$);
\coordinate (a4) at ($(a3)+(0, 1.30em)$);
\coordinate (a5) at ($(a4)+(1.30em, 0)$);
\coordinate (a6) at ($(a5)+(0, 1.30em)$);
\coordinate (a7) at ($(a6)+(1.30em, 0)$);
\coordinate (a8) at ($(a7)+(1.30em, 0)$);
\coordinate (a9) at ($(a8)+(0, 1.30em)$);
\coordinate (a10) at ($(a9)+(1.30em, 0)$);
\coordinate (a11) at ($(a10)+(1.30em, 0)$);

\coordinate (b1) at ($(a2)+(1.30em, 0)$);
\coordinate (b2) at ($(a4)+(1.30em, 0)$);
\coordinate (b3) at ($(a6)+(1.30em, 0)$);
\coordinate (b4) at ($(a8)+(1.30em, 0)$);

\draw[very thick,dashed] (a1)-- (a2) -- (a3)-- (a4)-- (a5)-- (a6)-- (a7)-- (a8)-- (a9)-- (a10)--(a11);

\coordinate (c1) at ([xshift=1.30em]n.south west);

\coordinate (c2) at ($(c1)+(1.30em, 0)$);
\coordinate (c3) at ($(c2)+(0, 1.30em)$);
\coordinate (c4) at ($(c3)+(0, 1.30em)$);
\coordinate (c5) at ($(c4)+(1.30em, 0)$);
\coordinate (c6) at ($(c5)+(0, 1.30em)$);
\coordinate (c7) at ($(c6)+(1.30em, 0)$);
\coordinate (c8) at ($(c7)+(0, 1.30em)$);
\coordinate (c9) at ($(c8)+(1.30em, 0)$);
\coordinate (c10) at ($(c9)+(1.30em, 0)$);
\coordinate (c11) at ($(c10)+(0, 1.30em)$);

\draw[ultra thick, red] (c1)-- (c2) -- (c3)-- (c4)-- (c5)-- (c6)-- (c7)-- (c8)-- (c9)-- (c10)--(c11);
\end{tikzpicture}
\caption{Rook placement with lattice path in red.}
    \label{fig:rook_to_lattice_path_linear_ext}
\end{subfigure}
\caption{Skew shape -- poset correspondence. Indices of North and East steps of the dashed path in (A) form two disjoint chains, $C_{1}$ and $C_{2}$ of the poset in (B). Blue and red cover relations correspond to outer and inner corners respectively. The path permutation corresponding to the path in (C) is $\sigma = 41263759 \, 10 \, 8$. Here $\text{DT}(\sigma) = \{4,6,7,10\}$ and $\text{RA}(\sigma) = \{2\}$. The set $\text{DT}(\sigma) \cup \text{RA}(\sigma)$ represents the rook placement in the picture above.}
\label{fig:skew_to_poset}
\end{figure}

Now we exhibit a bijection from non-nesting rook placements inside $\lambda / \mu$ to elements of the Jordan-Hölder set $\mathcal{L}(P, \omega)$. This bijection will send non-nesting rook configurations with $k$ occupied rows to inverses of linear extensions with $k$ descents. Formally, define $f: \NN_{\lambda / \mu} \to \mathcal{L}(P, \omega)$ by $f = \psi\circ T_{\lambda / \mu}$ where $T_{\lambda / \mu}: \NN_{\lambda / \mu} \to \pathMat_{\lambda / \mu}$ is the bijection between rook placements and lattice paths described in Proposition~\ref{lem:setBijectionPathRook} and $\psi: \pathMat_{\lambda / \mu} \to \mathcal{L}(P, \omega)$ is the path-to-permutation map  $\psi(L) = \sigma = \sigma_{1}\ldots \sigma_{r+c}$, constructed in Definition~\ref{def:path_permutation}. An illustration of this is in Figure~\ref{fig:rook_to_lattice_path_linear_ext}.

\textbf{Claim 2:} The map $f$ defined above is a bijection. 

\textbf{Proof of Claim 2:} By the uniqueness of the permutation representation of the lattice path and the rook-to-path bijection from Proposition~\ref{lem:setBijectionPathRook}, the map $f$ is well-defined. To show that $f$ is a bijection, it suffices to show that $\psi$ is a bijection.

To that end, we first need to show that $\sigma = \psi(L)$ does indeed lie in $\mathcal{L}(P, \omega)$, whenever $L$ is a lattice path contained inside $\lambda/\mu$. Suppose $i \precdot j$ in $P$. Two cases arise: 

\underline{Case 1:} Both $i, j$ lie in the same chain of $P$, and are hence either both row indices or both column indices; since row indices increase from top to bottom and column indices increase from left to right in the labeling of $\lambda / \mu$, the path $L$ must traverse step $\sigma^{-1}(i)$ before step $\sigma^{-1}(j)$ and hence $i$ must precede $j$ in $\sigma$. 

\underline{Case 2:} The elements $i, j$ lie in different chains. Without loss of generality, assume that $i$ is a row index and $j$ is a column index, so that $(i, j)$ is an outer corner of $\lambda/\mu$. Then the $(\sigma^{-1}(i))^\thsup$ and $(\sigma^{-1}(j))^\thsup$ steps of $L$ are respectively North and East steps. Let $Q$ be the lower boundary path of $\lambda/\mu$. Since $(i,j)$ is an outer corner, $Q$ has a North-East segment at this corner. Let $(\psi(Q))_k=i$, so that $(\psi(Q))_{k+1}=j$. Since $L$ is contained in $\lambda/\mu$, it lies weakly above $Q$. Hence the North step of $L$ in row $i$ occurs no later than the corresponding North step of $Q$, while the East step of $L$ in column $j$ occurs no earlier than the corresponding East step of $Q$. This implies that $\sigma$ satisfies $\sigma^{-1}(i) \leq k < k+1 \leq \sigma^{-1}(j)$, as required. The case where $i,j$ are respectively column and row indices is analogous, using the upper boundary path of $\lambda/\mu$. This shows that the image of $f$ lies in $\mathcal{L}(P, \omega).$ 

To see that $\psi$ is a surjection, consider $\sigma \in \mathcal{L}(P, \omega)$. Define a lattice path $L$ by letting the $i^\thsup$ step of $L$ be North if $\sigma_{i}$ is in $C_{1}$ and East if $\sigma_{i}$ is in $C_{2}$; such a path certainly satisfies $\psi(L) = \sigma$ once we verify that $L$ is contained inside the shape. To do so, suppose $L$ exits the skew shape via the lower boundary of $\lambda / \mu$. Then there exists an outer corner $(i, j)$ of $\lambda / \mu$ such that $ji$ forms an $\mathtt{EN}$ step of $L$. In particular $\sigma = \sigma_{1}\ldots ji \ldots \sigma_{r+c}$. Now $(i, j)$ is an outer corner which implies that $i\precdot j$ and since $\sigma \in \mathcal{L}(P, \omega)$, $i$ must precede $j$ in $\sigma$, a contradiction. The case where $L$ exits through the upper boundary is analogous: one obtains an inner corner $(i,j)$ such that $ij$ forms an $\mathtt{NE}$ segment of $L$, contradicting the relation $j\precdot i$. Thus $L$ is a lattice path contained within $\lambda / \mu$ and hence $\psi$ is a surjection, and $f$ is indeed a bijection.

\textbf{Claim 3:} Let $\rho$ be the rook placement that corresponds to $\sigma$ under the bijection $f$. Then \[C(\rho) = \text{DT}(\sigma) \quad \text{and} \quad R(\rho) = \text{RA}(\sigma).\] That is, under the innermost path labeling of $\lambda/\mu$, occupied columns of $\rho$ and the unoccupied rows of $\rho$ are precisely equal to the descent tops of $\sigma$ and row ascents of $\sigma$ respectively.

\textbf{Proof of Claim 3:} Let $T_{\lambda /\mu}(\rho)$ be the lattice path $L_{\rho} = L$ and let $\sigma_L$ be the path permutation of $L$, with the convention that $\sigma_{0} = -\infty$. Then 
\[
C(\rho)= \psi(\{\text{East steps of inner valleys of  $L$}\}) \\ = \{\sigma^{L}_{i} \in C_{2}: \sigma^{L}_{i} > \sigma^{L}_{i+1} \} \\ = \text{DT}(\sigma), \quad \text{and}\]
\[
R(\rho) = C_{1}\setminus \psi(\{\text{North steps of inner valleys of $L$}\}) = \{\sigma^{L}_{i} \in C_{1}: \sigma^{L}_{i-1} < \sigma^{L}_{i}\} = \text{RA}(\sigma).\]

This completes the proof of the assertion that $\widetilde{W}_{P, \omega}$ can be written as a multivariate generating polynomial of non-nesting rook placements on $\lambda /\mu$. To finish, we note that after reindexing the rows and columns of the skew shape in accordance with the matroid labeling, this latter polynomial is precisely the basis-generating polynomial of $\rookMat_{\lambda / \mu}$.
\end{proof}

Let $\mathcal{E}$ be the map from Theorem~\ref{thm:path_to_poset} that takes a skew shape $\lambda/\mu$ to a width two poset $P$.
From the construction in the proof above, it is implicit that $\mathcal{E}$ is injective. We record the surjectivity of $\mathcal{E}$ in the lemma below. 

\begin{lemma}\label{lem:poset_to_path} 
    For every poset $P$ of width two, there exists a skew shape $\lambda/\mu$ such that $\mathcal{E}(\lambda/\mu) = P$.
\end{lemma}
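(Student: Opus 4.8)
The plan is to invert the construction $\mathcal{E}$ from Theorem~\ref{thm:path_to_poset} by reading the two chains of $P$ directly as the rows and columns of a skew shape, using the cross-chain comparabilities to decide which cells are present. First I would invoke Dilworth's theorem to partition the width-two poset $P$ into two chains $C_1 = \{a_1 \prec \dots \prec a_r\}$ and $C_2 = \{b_1 \prec \dots \prec b_c\}$ (allowing one chain to be empty when $P$ is itself a chain). I will treat the elements of $C_1$ as rows, indexed $1,\dots,r$ from bottom to top, and the elements of $C_2$ as columns, indexed $1,\dots,c$ from left to right, and then declare the cell in row $i$ and column $j$ to be present precisely when $a_i$ and $b_j$ are incomparable in $P$.

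Next I would check that the set of present cells really is the diagram of a skew shape $\lambda/\mu$. Fixing a row $i$, the chain condition on $C_2$ forces $\{\,j : b_j \prec a_i\,\}$ to be an initial segment $\{1,\dots,p_i\}$ and $\{\,j : a_i \prec b_j\,\}$ to be a final segment $\{q_i,\dots,c\}$, so the present columns form the block $\{p_i+1,\dots,q_i-1\}$; thus each row is a contiguous interval. Monotonicity of the two boundaries follows from $a_i \prec a_{i+1}$: if $b_j \prec a_i$ then $b_j \prec a_{i+1}$, and if $a_{i+1} \prec b_j$ then $a_i \prec b_j$, so $p_i \le p_{i+1}$ and $q_i \le q_{i+1}$, i.e.\ both the left and right boundaries are weakly increasing from bottom to top. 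Reading these boundaries off produces partitions $\lambda \supseteq \mu$; the degenerate cases in which a row or column is empty (as in the remark surrounding $5543/5222$) are accommodated by the usual conventions for empty rows and columns.

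Finally I would verify that $\mathcal{E}(\lambda/\mu) = P$. The key input is the comparability description of $\mathcal{E}(\lambda/\mu)$, which I would extract from the identification of $\mathcal{L}(\mathcal{E}(\lambda/\mu),\omega)$ with the lattice paths in $\lambda/\mu$ established in the proof of Theorem~\ref{thm:path_to_poset}: the $i$-th North step precedes the $j$-th East step in \emph{every} contained path if and only if column $j$ lies strictly to the right of row $i$. Hence $a_i \prec b_j$ in $\mathcal{E}(\lambda/\mu)$ exactly when cell $(i,j)$ is absent and to the right of row $i$, dually $b_j \prec a_i$ exactly when $(i,j)$ is absent and above column $j$, and $a_i \parallel b_j$ exactly when $(i,j)$ is present. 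By construction these three alternatives match the comparabilities of $P$ cell by cell, while the within-chain orders agree by the choice of labeling; since a finite poset is determined by its comparability relation, $\mathcal{E}(\lambda/\mu) = P$.

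The main obstacle I expect is the middle step: proving that the incomparability region is an honest skew diagram and, in particular, correctly handling posets with a unique minimum or maximum, which produce empty rows or columns and make the chain decomposition non-unique. The comparability bookkeeping in the last step is essentially routine once the geometric meaning of "North-before-East" is pinned down, but some care is needed to state the boundary monotonicity cleanly and to confirm that the reconstructed shape is insensitive to the harmless freedom in the chain decomposition.
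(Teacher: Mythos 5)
Your proof is correct, and while it builds the same shape as the paper's, it packages the inverse construction differently. The paper first strips off any unique minimum or maximum (recording these as empty columns/rows), splits what remains into two chains, and then \emph{defines} $\lambda/\mu$ by decreeing an inner or outer corner at each cross-chain cover relation, citing the corner--cover dictionary from the proof of Theorem~\ref{thm:path_to_poset}. You instead define the shape by its cells --- cell $(i,j)$ present iff $a_i\parallel b_j$ --- and this buys you two things the paper leaves implicit: a genuine verification that the construction yields a skew diagram (your interval-plus-monotonicity argument for the boundaries $p_i\le p_{i+1}$, $q_i\le q_{i+1}$ is exactly the consistency check the paper's corner-placement recipe skips), and a characterization of $\mathcal{E}(\lambda/\mu)$ via the full order relation (North-before-East in \emph{every} path) rather than via cover relations, which avoids having to argue that the prescribed corners are mutually compatible. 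The cost is that you must extract the three-way dichotomy (cell present $\Leftrightarrow$ incomparable; absent to the right of row $i$ $\Leftrightarrow a_i\prec b_j$; absent to the left $\Leftrightarrow b_j\prec a_i$) from the linear-extension/path bijection, a small but genuine addition to what Theorem~\ref{thm:path_to_poset} states explicitly; and you should be slightly more careful than ``determined by its comparability relation,'' since you are matching the \emph{directed} order relation pairwise, not merely the comparability graph. The degenerate cases (unique extremes, empty rows/columns, non-unique chain decomposition) are handled by you in place rather than peeled off in advance, which is fine since the lemma only asserts existence.
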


\begin{proof}
First we repeatedly delete unique minimal and unique maximal elements of $P$, whenever
they exist, until no such element remains. (This reduction is harmless since the skew shape corresponding to the non-reduced poset is obtained from that of the reduced one by prepending empty rows and empty columns.) Once the poset --- which by a minor abuse of notation we continue to denote by $P$ --- has two minimal and two maximal elements, proceed as follows.

Let $\{t_{1}, \ldots , t_{r+c}\}$ be the elements of $P$ and let $C_{1}, C_{2}$ be the two chains of $P$ of length $r$ and $c$ respectively. Define $\lambda/\mu$ to be the skew shape with $r$ rows (labeled from bottom to top by elements of $C_{1}$) and $c$ columns (labeled from left to right by elements of $C_{2}$) such that: \begin{enumerate}
    \item There is an inner corner at $(t_{i}, t_{j})$ for every cover relation of the form $t_{i} \precdot t_{j}$ where $t_{i} \in C_{2}$ and $t_{j} \in C_{1}$. 
    \item There is an outer corner at $(t_{i}, t_{j})$ for every cover relation of the form $t_{i} \precdot t_{j}$ where $t_{i} \in C_{1}$ and $t_{j} \in C_{2}$.
\end{enumerate}
By the correspondence between corners and cover relations detailed in the proof of Theorem~\ref{thm:path_to_poset}, this is precisely the skew shape $\lambda/\mu$ such that $\mathcal{E}(\lambda/\mu) = P$.
\end{proof}
\begin{remark}
    Let $(P, \omega')$ be a naturally labeled poset. While Theorem~\ref{thm:path_to_poset} and  Lemma~\ref{lem:poset_to_path} together yield a skew shape $\lambda/\mu$ such that $(\mathcal{E}(\lambda / \mu), \omega)$ is a naturally labeled poset satisfying $\mathcal{E}(\lambda /\mu) = P$, the two  natural labelings $\omega, \omega'$ might differ. This will not matter for us, however, since the polynomial $W_{P}$ is independent of the natural labeling. 
\end{remark}

Theorem \ref{thm:path_to_poset} and Lemma~\ref{lem:poset_to_path} together allow one to draw mutually enriching connections between $P$-Eulerian polynomials and non-nesting rook polynomials. In what follows, we gather corollaries of this correspondence. 

To begin, we note that the ultra-log-concavity consequence of the Neggers--Stanley conjecture is true for naturally labeled width two posets. We can deduce an analogous result for the polynomial $E_{P}(t) = \sum_{j=1}^{|P|}e_{j}(P)t^{j}$ where $e_{j}(P)$ denotes the number of surjective order--preserving maps from $P$ to $[j]$. An alternative formulation of the Neggers--Stanley conjecture for naturally labeled posets $P$ asserts the real-rootedness of $E_{P}$.   

\begin{corollary}\label{neggers_stanley_width2}
Let $P$ be a naturally labeled poset of width two and $\lambda / \mu$ be the skew shape obtained from Lemma~\ref{lem:poset_to_path}. Then the $W$-polynomial of $P$ equals the non-nesting rook polynomial of $\lambda / \mu$: \begin{equation}\label{eq:w_poly_equals_nn_rook}
 W_{P}(t) =  M_{\lambda / \mu}(t),
\end{equation}
and hence $W_{P}$ is ultra-log-concave. Consequently, $E_{P}$ is also ultra-log-concave. 
\end{corollary}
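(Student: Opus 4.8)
The plan is to prove the three assertions in turn; the first two fall out of results already in hand, and only the last requires genuine new work, namely a short Lorentzian argument.

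\emph{Step 1: the identity $W_P = M_{\lambda/\mu}$.} I would deduce this by specializing the multivariate identity~\eqref{eq:rook_as_w_polynomial} of Theorem~\ref{thm:path_to_poset}. Setting every row variable $x_i = t$ and every column variable $y_j = 1$ in $r_{\lambda/\mu}(\xvec,\yvec)$ collapses it to $\sum_{\rho \in \NN_{\lambda/\mu}} t^{|R(\rho)|} = \sum_k r_k(\lambda/\mu)\,t^k = M_{\lambda/\mu}(t)$, since a placement with $k$ rooks occupies exactly $k$ rows. Applying the same specialization to $\widetilde{W}_{P,\omega}$ gives $\sum_{\sigma \in \mathcal{L}(P,\omega)} t^{|\mathrm{DB}(\sigma)|} = \sum_\sigma t^{\des(\sigma)} = W_P(t)$, using $|\mathrm{DB}(\sigma)| = \des(\sigma)$ and the fact that $W_P$ does not depend on the natural labeling $\omega$. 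Because the two multivariate polynomials agree up to reindexing of variables, so do their specializations, which is exactly~\eqref{eq:w_poly_equals_nn_rook}. Ultra-log-concavity of $W_P$ is then immediate, since $W_P = M_{\lambda/\mu}$ is ultra-log-concave with no internal zeros by Corollary~\ref{ultra_log_concavity_ncm}.

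\emph{Step 2: relating $E_P$ to $W_P$.} Next I would recall the standard relation between the $W$-polynomial and the surjection-counting polynomial from $P$-partition theory \cite{StanleyEC1}. Writing $p = |P|$ and combining $\Omega_P(n) = \sum_j e_j(P)\binom{n}{j}$ with $\sum_{n\ge 1}\Omega_P(n)\,t^n = tW_P(t)/(1-t)^{p+1}$, comparison of the two generating functions yields $tW_P(t) = \sum_{j} e_j(P)\,t^j(1-t)^{p-j}$, equivalently
\[
E_P(t) = t\,(1+t)^{p-1}\,W_P\!\left(\tfrac{t}{1+t}\right).
\]
This is a routine computation, requiring only care with the constant offset (a check on the two-element antichain, where $W_P = 1+t$ and $E_P = t + 2t^2$, fixes the normalization).

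\emph{Step 3: ultra-log-concavity of $E_P$.} This is where the Lorentzian machinery enters. Let $d = \deg W_P$ and let $H(x,y) = \sum_{k=0}^d w_k\,x^k y^{d-k}$ be the homogenization of $W_P$. By Step 1 and the bivariate criterion used in the proof of Theorem~\ref{thm:stanley_1981matroid} (a homogeneous bivariate polynomial is Lorentzian if and only if its coefficient sequence is ultra-log-concave with no internal zeros), $H$ is Lorentzian. Homogenizing the displayed formula for $E_P$ and using homogeneity of $H$ gives the degree-$p$ homogenization
\[
G(x,y) = x\,(x+y)^{\,p-1-d}\,H(x,\,x+y),
\]
where $p-1-d \ge 0$ since $d \le \min(r,c) \le p-1$. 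Each factor is Lorentzian: the monomial $x$ and the power $(x+y)^{p-1-d}$ of a nonnegative linear form are Lorentzian, while $H(x, x+y)$ arises from the Lorentzian polynomial $H$ by the substitution $(x,y)\mapsto(x,x+y)$, whose matrix $\bigl(\begin{smallmatrix}1&0\\1&1\end{smallmatrix}\bigr)$ has nonnegative entries, so it is Lorentzian by closure of Lorentzian polynomials under nonnegative linear maps \cite{BrandenHuh2020Lorentzian}. Since products of Lorentzian polynomials are Lorentzian, $G$ is Lorentzian, and hence its coefficient sequence $(e_j(P))$ is ultra-log-concave with no internal zeros; that is, $E_P$ is ultra-log-concave.

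I expect Step 3 to be the main obstacle. The substitution $t\mapsto t/(1+t)$ together with multiplication by powers of $(1+t)$ obviously preserves real-rootedness, but there is no reason it should preserve ultra-log-concavity at the level of raw coefficient sequences. The point that rescues the argument is that, once homogenized, the entire transformation is realized by a nonnegative linear change of variables followed by multiplication by Lorentzian factors, so Lorentzianness---and with it ultra-log-concavity---is preserved.
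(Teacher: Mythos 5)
Your proposal is correct. Step 1 is essentially the paper's own argument: specialize the multivariate identity of Theorem~\ref{thm:path_to_poset} by sending all row variables to $t$ and all column variables to $1$, observe that $|R(\rho)|$ equals the number of rooks and $|\mathrm{DB}(\sigma)|=\des(\sigma)$, and invoke Corollary~\ref{ultra_log_concavity_ncm} for ultra-log-concavity of $W_P$. Where you genuinely diverge is the last claim: the paper disposes of the ultra-log-concavity of $E_P$ in one line by citing \cite[Lemma 2.5 (ii)]{BrandenJochemkoFerroni2024Preservation}, whereas you re-derive that preservation statement from scratch. Your derivation checks out: the identity $tW_P(t)=\sum_j e_j(P)\,t^j(1-t)^{p-j}$ is the standard $P$-partition relation (your antichain sanity check confirms the normalization), the homogenization $\sum_j e_j x^j y^{p-j}=x(x+y)^{p-1-d}H(x,x+y)$ is a correct algebraic consequence, the inequality $d\le\min(r,c)\le p-1$ justifies the nonnegative exponent, and each factor is Lorentzian for exactly the reasons you give (bivariate ULC with no internal zeros $\Leftrightarrow$ Lorentzian for $H$; nonnegative linear substitution and products preserve Lorentzianness). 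What your route buys is a self-contained proof independent of the external reference, and it makes transparent \emph{why} the $W\mapsto E$ transformation preserves ultra-log-concavity even though it visibly fails to do so coefficient-by-coefficient before homogenizing; what it costs is length, which is presumably why the paper outsources it.
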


\begin{proof}
Since every natural labeling of the poset $P$ gives rise to the same $W_{P}$, assume $P$ has the canonical labeling $\omega$. By Theorem~\ref{thm:path_to_poset}, the polynomial $\widetilde{W}_{P, \omega}$ is equal, up to a relabeling of the variables, to the basis-generating polynomial of the rook matroid on $\lambda/\mu$. The latter polynomial is Lorentzian~\cite[Theorem 3.10]{BrandenHuh2020Lorentzian}, and this property is  preserved under a linear change of variables \cite[Theorem 2.10]{BrandenHuh2020Lorentzian}. The aforementioned relabeling maps $\xvec$ variables (resp. $\yvec$ variables) labeled by elements of $C_{1}$ (resp. $C_{2}$) to $\xvec$ variables labeled by rows $1, \ldots , r$ (resp. columns $r+1, \ldots , r+c$) of the skew shape. Thus, by specializing the $\xvec$ variables to $1$ and the $\yvec$ variables to $t$ in~(\ref{eq:rook_as_w_polynomial}), we obtain (\ref{eq:w_poly_equals_nn_rook}). The ultra-log-concavity statement of $W_{P}$ then follows from Corollary~\ref{ultra_log_concavity_ncm}. Finally, the ultra-log-concavity of $E_{P}$ follows from \cite[Lemma 2.5 (ii)]{BrandenJochemkoFerroni2024Preservation}.
\end{proof}

In particular, the $P$-Eulerian polynomial of Stembridge's counterexample to the Neggers--Stanley conjecture is ultra-log-concave, despite not being real-rooted. Figure~\ref{fig:stembridge} shows Stembridge's counterexample and the associated skew shape; the $P$-Eulerian polynomial is computed in Corollary~\ref{cor:nn_with_non_real_roots}.

A simple but immediate consequence of this correspondence is that $M_{\lambda / \mu}$ can be interpreted as the $h^{*}$-polynomial of an order polytope. The context for this is a conjecture on the unimodality of the $h^{*}$-polynomial of lattice polytopes with the integral decomposition property~\cite{SchepersVanLangenhoven2013UnimodalityIntegrallyClosed}; see~\cite{Braun2016UnimodalitySurvey} for a survey on unimodality problems in Ehrhart theory.

\begin{corollary}\label{M_lambda_mu_as_h_star}
    Let $\lambda / \mu$ be a skew shape, $P$ be the corresponding naturally labeled poset obtained from Theorem~\ref{thm:path_to_poset}, and $\mathcal{O}(P)$ be the order polytope of $P$. Then \[
    M_{\lambda / \mu}(t) =  h^{*}_{\mathcal{O}(P)}(t).
    \]
    In particular, the $h^{*}$-polynomial of the order polytope of a naturally labeled width two poset is ultra-log-concave.
\end{corollary}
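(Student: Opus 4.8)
The plan is to reduce the claimed identity to two ingredients that are already available: the poset–skew-shape dictionary of Theorem~\ref{thm:path_to_poset}, and Stanley's classical description of the $h^{*}$-polynomial of an order polytope as a descent-generating function over linear extensions. First I would record the univariate consequence of Theorem~\ref{thm:path_to_poset}. Specializing the multivariate identity~\eqref{eq:rook_as_w_polynomial} by setting every row variable $x_i$ equal to $t$ and every column variable $y_j$ equal to $1$ in $r_{\lambda/\mu}(\xvec,\yvec) \cong \widetilde{W}_{P,\omega}(\xvec,\yvec)$, the left-hand side collapses to $\sum_{\rho \in \NN_{\lambda/\mu}} t^{|R(\rho)|} = \sum_k r_k(\lambda/\mu)\,t^k = M_{\lambda/\mu}(t)$, while the right-hand side collapses to $\sum_{\sigma \in \mathcal{L}(P,\omega)} t^{|\mathrm{DB}(\sigma)|}$. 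Since $|\mathrm{DB}(\sigma)| = \des(\sigma)$ (the unprimed descent-bottom set counts exactly the genuine descents), this is $W_{P,\omega}(t) = W_P(t)$, the $W$-polynomial being independent of the choice of natural labeling. This is precisely the content of Corollary~\ref{neggers_stanley_width2}, which I would cite for the equality $M_{\lambda/\mu}(t) = W_P(t)$.

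Next I would invoke Stanley's theorem on order polytopes: for any naturally labeled poset $P$ on $n$ elements, the half-open decomposition of $\mathcal{O}(P)$ indexed by the linear extensions yields, via the theory of $(P,\omega)$-partitions,
\[
h^{*}_{\mathcal{O}(P)}(t) = \sum_{\sigma \in \mathcal{L}(P,\omega)} t^{\des(\sigma)} = W_P(t);
\]
see \cite[Chapter 3]{StanleyEC1}. Combining this with the previous paragraph gives $M_{\lambda/\mu}(t) = W_P(t) = h^{*}_{\mathcal{O}(P)}(t)$, which is the asserted identity.

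The ultra-log-concavity is then immediate. By Corollary~\ref{ultra_log_concavity_ncm} the coefficient sequence of $M_{\lambda/\mu}$ is ultra-log-concave with no internal zeros, and hence so is that of $h^{*}_{\mathcal{O}(P)}$. For the ``in particular'' clause I would use Lemma~\ref{lem:poset_to_path}: every naturally labeled width-two poset arises as $\mathcal{E}(\lambda/\mu)$ for some skew shape $\lambda/\mu$, so the conclusion applies to the order polytope of an arbitrary such poset.

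I do not expect a genuine obstacle here, since both ingredients are either proved earlier in the paper or entirely classical; the one point I would verify carefully is the matching of descent conventions. The paper's Jordan–Hölder formulation defines $W_P$ through $\des(\sigma)$ for those $\sigma$ whose inverse is a linear extension, whereas Stanley's $h^{*}$-formula is most often stated directly over linear extensions. These agree because Stanley's $(P,\omega)$-partition theory expresses the $h^{*}$-polynomial of $\mathcal{O}(P)$ as the descent-generating function over exactly the Jordan–Hölder set $\mathcal{L}(P,\omega)$; I would state this alignment explicitly rather than treat it as obvious, to keep the normalization of the order polytope (half-open versus closed, and whether the Ehrhart polynomial counts maps into a chain of length $m$ or $m+1$) consistent with the convention under which $h^{*}$ has nonnegative coefficients of the stated degrees.
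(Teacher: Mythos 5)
Your proof is correct and follows essentially the same route as the paper: both rest on the identity $M_{\lambda/\mu}(t)=W_P(t)$ from Theorem~\ref{thm:path_to_poset} (via Corollary~\ref{neggers_stanley_width2}) combined with the classical fact that $W_P(t)=h^{*}_{\mathcal{O}(P)}(t)$ for naturally labeled posets, with ultra-log-concavity supplied by Corollary~\ref{ultra_log_concavity_ncm}. Your extra care about descent conventions and the use of Lemma~\ref{lem:poset_to_path} for the ``in particular'' clause only makes explicit what the paper leaves implicit.
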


\begin{proof}
    The proof is immediate from Theorem~\ref{thm:path_to_poset} together with the well-known fact that for a naturally labeled poset $P$, its $P$-Eulerian polynomial coincides with the $h^{*}$-polynomial of its order polytope; see, for example,~\cite[Theorem 6.3.11]{BeckSanyal2018CRT}.
\end{proof}

In the other direction of the poset--skew shape correspondence, one can use known results on $P$-Eulerian polynomials to deduce distributional properties of the non-nesting rook numbers, extending the results in Section~\ref{section:distributional_properties}. We mention two such applications below.

Stembridge's width two counterexample to the Neggers--Stanley conjecture \cite{Stembridge2007NeggersStanleyCounter} shows that real-rootedness of $M_{\lambda / \mu}$ can fail in general. This counterexample is illustrated in Figure~\ref{fig:stembridge} together with the corresponding skew shape. The following corollary gives a rook-theoretic and matroid-theoretic context within which Stembridge's counterexample can be understood. 

\begin{corollary}\label{cor:nn_with_non_real_roots}
    The non-nesting rook polynomial $M_{\lambda /\mu}$ is not real-rooted in general. Concretely, $M_{\lambda/\mu}$ is not real-rooted for the skew shape $\lambda / \mu = 888888765/76654321$.
\end{corollary}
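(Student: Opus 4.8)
The plan is to leverage the skew-shape--poset correspondence established in this section to transport Stembridge's counterexample to the Neggers--Stanley conjecture directly into the rook-theoretic setting. Stembridge \cite{Stembridge2007NeggersStanleyCounter} exhibited a \emph{naturally labeled} poset $P$ of width two whose $W$-polynomial $W_{P}$ has a pair of non-real roots; this is precisely the poset drawn in Figure~\ref{fig:stembridge}. Since $P$ has width two, Lemma~\ref{lem:poset_to_path} produces a skew shape $\lambda/\mu$ with $\mathcal{E}(\lambda/\mu) = P$, and the concrete content of the corollary is the assertion that this shape is exactly $888888765/76654321$.

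First I would verify this identification explicitly. Following the recipe in the proof of Lemma~\ref{lem:poset_to_path}, I would split $P$ into its two chains $C_{1}$ and $C_{2}$, read off every cover relation between the chains, and place an inner corner at $(t_i,t_j)$ for each relation $t_i \precdot t_j$ with $t_i \in C_{2}$, $t_j \in C_{1}$, together with an outer corner for each relation $t_i \precdot t_j$ with $t_i \in C_{1}$, $t_j \in C_{2}$. Filling in the cells forced by these corners reconstructs the boundary of $\lambda/\mu$, and a direct check confirms that the resulting diagram has outer shape $888888765$ and inner shape $76654321$.

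Next I would invoke Corollary~\ref{neggers_stanley_width2}: since $\lambda/\mu = \mathcal{E}^{-1}(P)$ and $P$ is naturally labeled of width two, we have the identity $W_{P}(t) = M_{\lambda/\mu}(t)$, the $W$-polynomial being independent of the chosen natural labeling. Combining this with Stembridge's result that $W_{P}$ is not real-rooted immediately yields that $M_{\lambda/\mu}$ is not real-rooted, which establishes the claim; in particular the non-nesting rook polynomial fails to be real-rooted in general, in sharp contrast to the classical rook polynomial of Nijenhuis \cite{Nijenhuis1976}.

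The main obstacle I anticipate is the bookkeeping in the identification step: Stembridge's poset is moderately large, so translating its Hasse diagram faithfully into the corner data of a skew shape and confirming that $\lambda$ and $\mu$ match the stated partitions requires care. As a robust cross-check---and an alternative route that sidesteps any reliance on the exact combinatorics of Stembridge's poset---I would instead compute $M_{\lambda/\mu}(t)$ directly from Definition~\ref{defn:non_nesting_rook_polynomial} (for instance in a computer algebra system, as was done for Theorem~\ref{thm:LPM_not_HPP}) and confirm that it possesses a conjugate pair of non-real roots. A Sturm-sequence count or a discriminant computation would then upgrade the numerical observation to a rigorous proof that the polynomial has strictly fewer than $\deg M_{\lambda/\mu}$ real roots.
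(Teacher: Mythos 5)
Your proposal matches the paper's own argument: both routes rest on identifying $888888765/76654321$ with Stembridge's naturally labeled width-two counterexample via the skew shape--poset correspondence and Corollary~\ref{neggers_stanley_width2}, and both offer the same fallback of computing $M_{\lambda/\mu}$ explicitly by computer (the paper finds $3t^8 + 86t^7 + 658t^6 + 1946t^5 + 2534t^4 + 1420t^3 + 336t^2 + 32t + 1$ with roots near $-1.85884 \pm 0.14976i$). The only cosmetic difference is the direction of travel---you reconstruct the shape from the poset, while the paper reads off the poset from the shape---so this is essentially the same proof.
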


\begin{proof}
Let $\lambda / \mu = 888888765/76654321$. Applying Corollary~\ref{neggers_stanley_width2}, we obtain the poset on the left in Figure~\ref{fig:stembridge}, which we recognize as Stembridge's counterexample to Neggers' formulation of the Poset conjecture. Thus $M_{\lambda/\mu}$ is not real-rooted. 

Alternatively, using SageMath~\cite{SageMath}, one can compute $M_{\lambda / \mu}(t)$ to be: \[
M_{\lambda / \mu}(t) = 3t^8 + 86t^7 + 658t^6 + 1946t^5 + 2534t^4 + 1420t^3 + 336t^2 + 32t + 1.
\]
This polynomial has non-real zeros near $-1.85884 \pm 0.14976i$.
\end{proof}
\begin{remark}
    By the experimental evidence gathered in \cite{Stembridge2007NeggersStanleyCounter} and the poset--skew shape correspondence above, it follows that $\lambda/\mu = 888888765/76654321$ is the \emph{smallest} skew shape (by size) for which $M_{\lambda / \mu}$ fails to be real-rooted. It would be interesting to find a large class of skew shapes for which real-rootedness holds. 
\end{remark}

\begin{figure}[htbp]
\begin{subfigure}[b]{0.40 \textwidth}
\centering
\scalebox{0.9}{
\begin{tikzpicture}
\node[dot=4pt, fill=black, label={[xshift=-0.55cm, yshift=-0.25cm, text=blue]{$1$}}] at (0, 0) (a1) {};
\node[dot=4pt, fill=black, label={[xshift=-0.55cm, yshift=-0.25cm, text=blue]{$3$}}] at (0, 2) (a2) {};
\node[dot=4pt, fill=black, label={[xshift=-0.55cm, yshift=-0.25cm, text=blue]{$5$}}] at (0, 4) (a3) {};
\node[dot=4pt, fill=black, label={[xshift=-0.55cm, yshift=-0.25cm, text=blue]{$7$}}] at (0, 6) (a4) {};
\node[dot=4pt, fill=black, label={[xshift=-0.55cm, yshift=-0.25cm, text=blue]{$9$}}] at (0, 8) (a5) {};
\node[dot=4pt, fill=black, label={[xshift=-0.55cm, yshift=-0.25cm, text=blue]{$11$}}] at (0, 10) (z6) {};
\node[dot=4pt, fill=black, label={[xshift=-0.55cm, yshift=-0.25cm, text=blue]{$13$}}] at (0, 12) (z7) {};
\node[dot=4pt, fill=black, label={[xshift=-0.55cm, yshift=-0.25cm, text=blue]{$14$}}] at (0, 14) (z8) {};
\node[dot=4pt, fill=black, label={[xshift=-0.55cm, yshift=-0.25cm, text=blue]{$16$}}] at (0, 16) (z9) {};

\node[dot=4pt, fill=black, label={[xshift=0.55cm, yshift=-0.25cm, text=blue]{$2$}}] at (4, 0) (a6) {};
\node[dot=4pt, fill=black, label={[xshift=0.55cm, yshift=-0.25cm, text=blue]{$4$}}] at (4, 2) (a7) {};
\node[dot=4pt, fill=black, label={[xshift=0.55cm, yshift=-0.25cm, text=blue]{$6$}}] at (4, 4) (a8) {};
\node[dot=4pt, fill=black, label={[xshift=0.55cm, yshift=-0.25cm, text=blue]{$8$}}] at (4, 6) (a9) {};
\node[dot=4pt, fill=black, label={[xshift=0.55cm, yshift=-0.25cm, text=blue]{$10$}}] at (4, 8) (a10) {};
\node[dot=4pt, fill=black, label={[xshift=0.55cm, yshift=-0.25cm, text=blue]{$12$}}] at (4, 10) (y11) {};
\node[dot=4pt, fill=black, label={[xshift=0.55cm, yshift=-0.25cm, text=blue]{$15$}}] at (4, 12) (y12) {};
\node[dot=4pt, fill=black, label={[xshift=0.55cm, yshift=-0.25cm, text=blue]{$17$}}] at (4, 14) (y13) {};

\draw[thick,black] (a1)-- (a2) -- (a3)-- (a4)-- (a5) -- (z6) -- (z7) -- (z8) -- (z9);
\draw[thick,black] (a6)-- (a7)-- (a8)-- (a9)-- (a10)--(y11)--(y12)--(y13);
\draw[thick, black] (a1)-- (y11);
\draw[thick,black] (a2)-- (y12);

\draw[thick,black] (a3)-- (y13);

\draw[thick,black] (y12)-- (z9);
\draw[thick,black] (a2)-- (a6);
\draw[thick,black] (a3)-- (a7);
\draw[thick,black] (a4)-- (a8);
\draw[thick,black] (a5)-- (a9);
\draw[thick,black] (z6)-- (a10);
\draw[thick,black] (z7)-- (y11);

\end{tikzpicture}
}
\end{subfigure}
~
\hspace{0.5cm}
\begin{subfigure}[b]{0.40\textwidth}
\centering
\scalebox{0.9}{
\begin{tikzpicture}[inner sep=0in,outer sep=0in]
\node (n) {\begin{varwidth}{6cm}{
\ytableausetup{boxsize=1.35em}
 \begin{ytableau} \none & \none[2] & \none[4] & \none[6] & \none[8] & \none[10] & \none[12] & \none[15] & \none[17]  \\ \none[16] & \none & \none & \none & \none & \none & \none & \none &  \\ \none[14] & \none & \none & \none & \none & \none & \none &  &  \\ \none[13] & \none & \none & \none & \none & \none & \none &  &  \\ \none[11] & \none & \none & \none & \none & \none &  &  &  \\ \none[9] & \none & \none & \none & \none &  &  &  &  \\ \none[7] & \none & \none & \none &  &  &  &  &  \\ \none[5] & \none & \none &  &  &  &  &  \\ \none[3] & \none &  &  &  &  &  \\ \none[1] &  &  &  &  &  \\ \end{ytableau}}
\end{varwidth}};

\end{tikzpicture}
}
\end{subfigure}
\caption{On the left: Stembridge's  counterexample~\cite{Stembridge2007NeggersStanleyCounter} to the Neggers--Stanley conjecture (albeit with a different natural labeling). On the right: the corresponding skew shape, with the poset labeling of rows and columns.} 
\label{fig:stembridge}
\end{figure}

Using the skew shape -- poset correspondence, we can also deduce precisely when the non-nesting rook polynomial is \defin{palindromic}, i.e. when its coefficient sequence $(a_{k})_{k=0}^{n}$ satisfies $a_{k} = a_{n-k}$, for $k =0, \ldots , n$. A palindromic polynomial $A(t)$ can always be expressed in the following basis:
\[
A(t) = \sum_{k = 0}^{\lfloor\frac{n}{2}\rfloor}\gamma_{k}t^{k}(1+t)^{n-2k}
\]
When the coefficients satisfy $\gamma_{k} \geq 0$ for all $k$, $A(t)$ is called \defin{$\gamma$-positive}; this property is known to imply unimodality of $A(t)$. 

The skew shapes that satisfy symmetry of the non-nesting rook polynomial are defined below.

\begin{definition}
    A skew shape $\lambda/\mu$ is a \defin{squarecase} if the bounding rectangle of $\lambda/\mu$ is a square and all inner and outer corners of $\lambda/\mu$ are of the form $(i, r+i)$. 
\end{definition}

For such shapes, $\gamma$-positivity --- and hence unimodality --- holds for the non-nesting rook polynomial. This fact is not immediately obvious from the combinatorics of rook placements alone. In the proof below, the direct sum of skew shapes is an operation defined in the proof of Lemma~\ref{lem:direct_sum}. 

\begin{corollary}\label{cor:nn_gamma_positive}
    Let $\lambda / \mu$ be a skew shape. Then $M_{\lambda / \mu}$ is palindromic if and only if $\lambda/\mu$ is the direct sum of squarecase skew shapes. When this holds, $M_{\lambda / \mu}$ is $\gamma$-positive. 
\end{corollary}

\begin{proof}
It suffices to show that the result holds for connected skew shapes. This is because for skew shapes $D_{1}, D_{2}$ the polynomial $M_{D_{1}\oplus D_{2}} = M_{D_{1}}\cdot M_{D_{2}}$ is palindromic if and only if each $M_{D_{i}}$ is palindromic for $i=1,2$.

By the poset -- skew shape correspondence and the well-known criterion for the palindromicity of $P$-Eulerian polynomials~\cite[Corollary 3.15.18]{StanleyEC1}, $M_{\lambda/\mu}$ is palindromic if and only if the corresponding width two poset is graded. By unwinding what the graded condition means for the skew shape, we find that it holds if and only if the inner and outer corners of $\lambda/\mu$ are of the form $(i, r+i)$, where $r$ is the number of rows of the skew shape. The $\gamma$-positivity of $M_{\lambda / \mu}$ then follows from Theorem~\ref{thm:path_to_poset} and \cite[Theorem 4.2]{Branden06SignGraded}.
\end{proof}

\section{Funding}

This work was supported by the Wallenberg AI, Autonomous Systems and Software Program (WASP) funded by the Knut and Alice Wallenberg Foundation. 

\section{Acknowledgements}
We thank Jörgen Backelin and Maena Quemener for initial contributions to this project and helpful discussions throughout. We thank Joe Bonin for spotting an error in an earlier version of this paper, and are grateful to him for other clarifications and thoughtful correspondence. Finally, we are very grateful to Petter Bränd\'{e}n and Katharina Jochemko for their feedback and several stimulating conversations, as well as Benjamin Schröter for valuable insights. 
\bibliographystyle{alpha}
\bibliography{bibliography}

\end{document}